\documentclass[10pt, a4paper, notitlepage]{article}
\pdfoutput=1

\usepackage{biblatex}
\addbibresource{bibliography.bib}

%% Paper 1
%\usepackage[bbgreekl]{mathbbol}
%\usepackage{mbboard}
\usepackage{bbm}

%% Paper 2+3
\usepackage{etoolbox}
\usepackage{tikz}
\usetikzlibrary{calc}
\usetikzlibrary{cd}
\usetikzlibrary{decorations.markings}
\usetikzlibrary{decorations.pathreplacing}
\usetikzlibrary{decorations.pathmorphing}
\usetikzlibrary{decorations.text}
\usetikzlibrary{arrows.meta}
\usetikzlibrary{arrows}
\usetikzlibrary{positioning}
\usetikzlibrary{decorations.pathreplacing,fit}

\usepackage{geometry} % for page layout
\usepackage{tocloft} % for TOC customization
\usepackage{fancyhdr} % for header and footer
\usepackage{longtable}
\usepackage{subcaption}
\usepackage{enumitem}
\usepackage{amssymb}
\usepackage{amsmath}
\usepackage{stackrel} % for stackrel
\usepackage{uniinput}
\usepackage{amsthm}
\usepackage{stmaryrd}
\usepackage{mathtools}
\usepackage{fouridx}
\usepackage{xparse}
\usepackage{bm}
\usepackage{aliascnt}
\usepackage{import}

\usepackage{hyperref}
% Theorem formatting and numbering
\theoremstyle{definition}
\newtheorem{theorem}{Theorem}
\let\emph\textbf

\newaliascnt{remark}{theorem}
\newaliascnt{definition}{theorem}
\newaliascnt{proposition}{theorem}
\newaliascnt{lemma}{theorem}
\newaliascnt{corollary}{theorem}
\newaliascnt{example}{theorem}
\newaliascnt{convention}{theorem}
\newaliascnt{todo}{theorem}

\newtheorem{remark}[remark]{Remark}
\newtheorem{definition}[definition]{Definition}
\newtheorem{proposition}[proposition]{Proposition}
\newtheorem{lemma}[lemma]{Lemma}
\newtheorem{corollary}[corollary]{Corollary}
\newtheorem{example}[example]{Example}
\newtheorem{convention}[convention]{Convention}

\aliascntresetthe{remark}
\aliascntresetthe{definition}
\aliascntresetthe{proposition}
\aliascntresetthe{lemma}
\aliascntresetthe{corollary}
\aliascntresetthe{example}
\aliascntresetthe{convention}
\aliascntresetthe{todo}

\numberwithin{equation}{section}
\numberwithin{figure}{section}
\numberwithin{table}{section}
\makeatletter\let\c@table\c@figure\makeatother

\numberwithin{theorem}{section}
\numberwithin{remark}{section}
\numberwithin{definition}{section}
\numberwithin{proposition}{section}
\numberwithin{lemma}{section}
\numberwithin{corollary}{section}
\numberwithin{example}{section}
\numberwithin{convention}{section}
\numberwithin{todo}{section}

% Sections numbering
\setcounter{secnumdepth}{5}

\newcommand{\tinymath}[1]{\scalebox{0.7}{#1}}

% Page layout
\geometry{textwidth=16cm, textheight=25cm, inner=3cm, top=2.5cm}

\setcounter{tocdepth}{2}

% Float placement
\setcounter{totalnumber}{6}
\setcounter{topnumber}{6}

\newcommand{\Id}{\operatorname{Id}}
\newcommand{\id}{\mathrm{id}}

\newcommand{\Hom}{\operatorname{Hom}}
\newcommand{\Ker}{\operatorname{Ker}}

\newcommand{\Fuk}{\operatorname{Fuk}}

\newcommand{\vspan}{\operatorname{span}}

\newcommand{\isoto}{\xrightarrow{\sim}}

\newcommand{\embeds}{\hookrightarrow}

\newcommand{\Perf}{\operatorname{Perf}}

\newcommand{\M}{\mathcal{M}}

\newcommand{\Sym}{\operatorname{Sym}}

\newcommand{\Cat}{\operatorname{Cat}}

\newcommand{\restr}{|}

\newcommand{\Hilbhor}{\operatorname{Hilb}^{\operatorname{hor}}}
\newcommand{\NH}{\operatorname{NH}}

\newcommand{\Res}{\operatorname{Res}}

\newcommand{\Comp}{\operatorname{Comp}}
\newcommand{\BC}{\operatorname{BC}}
\newcommand{\Map}{\operatorname{Map}}
\newcommand{\fib}{\operatorname{fib}}
\newcommand{\Fun}{\operatorname{Fun}}
\newcommand{\Ind}{\operatorname{Ind}}
\newcommand{\NHS}{\mathbf{NH}}
\newcommand{\St}{\operatorname{St}}
\newcommand{\Anycross}{\operatorname{Anycross}}
\newcommand{\Mincross}{\operatorname{Mincross}}

\newcommand{\KLRW}{\operatorname{KLRW}}

% shufflebox: creates a box starting at #1 with #2 bottom/top slots named
% #3-B1, …, #3-B#2 and #3-T1, …, #3-T#2.
% There is extra horizontal padding namely #4.
\newcommand{\shufflebox}[4]{%
  \begin{scope}[shift={#1}]%
    \draw (0,0) rectangle ($ 0.5*(#2,1) + 2*(#4, 0) $);%
    \foreach \x in {1,...,#2}{%
      \coordinate (#3-B\x) at ($ (#4, 0) + 0.5*(\x,0)+0.5*(-.5,0)$);%
      \coordinate (#3-T\x) at ($ (#4, 0) + 0.5*(\x,1)+0.5*(-.5,0)$);%
    }%
    \coordinate (#3-C) at ($ 0.25*(#2, 1) + (#4, 0) $);
  \end{scope}%
}

% #1 = position, #2..#6 = permutation of {1,...,5}
\newcommand{\permutbox}[6]{%
  \begin{scope}[shift={#1}, scale=0.5]
    % bottom line
    \draw (0,0) -- (2,0)
      coordinate[pos=0.1] (bot-1)
      coordinate[pos=0.3] (bot-2)
      coordinate[pos=0.5] (bot-3)
      coordinate[pos=0.7] (bot-4)
      coordinate[pos=0.9] (bot-5);
    % top line
    \draw (0,1) -- (2,1)
      coordinate[pos=0.1] (top-1)
      coordinate[pos=0.3] (top-2)
      coordinate[pos=0.5] (top-3)
      coordinate[pos=0.7] (top-4)
      coordinate[pos=0.9] (top-5);
    % connect
    \draw (bot-1) -- (top-#2);
    \draw (bot-2) -- (top-#3);
    \draw (bot-3) -- (top-#4);
    \draw (bot-4) -- (top-#5);
    \draw (bot-5) -- (top-#6);
  \end{scope}%
}

% row width (number of boxes per row)
\newcommand{\permgridrowwidth}{4} % default 4 per row

% One permutation cell
% #1 = index (starting at 0), #2..#6 = permutation images
\newcommand{\permcell}[6]{%
  % compute row,col based on index
  \pgfmathtruncatemacro{\col}{mod(#1,\permgridrowwidth)}%
  \pgfmathtruncatemacro{\row}{int(#1/\permgridrowwidth)}%
  \permutbox{(\col*1.5, -\row*1)}{#2}{#3}{#4}{#5}{#6}%
}

\title{The NilHecke schober}
\author{Jasper van de Kreeke}
\date{}

\begin{document}

\maketitle

\begin{abstract}
Perverse schobers can be used to describe Fukaya categories but are hard to axiomatize and construct. In this paper, we give an explicit construction of a perverse schober intended to accurately describe the Fukaya category of the horizontal Hilbert scheme considered by Aganagic et al.~in the frame of the knot invariant categorification program. The formalism is based on the notion of $ A_n $-schobers of Dyckerhoff and Wedrich. The construction is entirely algebraic and we check the schober axioms with the help of diagrammatic calculus.
\end{abstract}

\tableofcontents

\section{Introduction}
Perverse schobers are a concept of “sheaf of categories” over a given stratified base space $ B $. In the case of the symmetric product $ B = \Sym^{n+1} (ℂ) $, a full definition of perverse schober was achieved by Dyckerhoff-Wedrich \cite{Dyckerhoff-Wedrich}. It entails the datum of a cubical diagram of stable $ ∞ $-categories satisfying certain axioms:
\begin{equation*}
χ: \{0, 1\}^n → \St.
\end{equation*}
The horizontal Hilbert scheme $ \Hilbhor_n (ℂ^2) $ is a Liouville manifold and its Fukaya category with potential was studied in \cite{ADLSZ}. Up to generation questions, this category is entirely described as $ \Perf \NH_n $ where $ \NH_n $ is the NilHecke algebra on $ n $ strands. The perfect complexes of NilHecke algebras assemble into a cubical diagram as well:
\begin{align*}
\NHS_{n+1}: \{0, 1\}^n ≅ \Comp(n+1) &→ \St, \\
(σ_1, …, σ_k) &↦ \Perf (\NH_{σ_1} ¤_{ℂ⟦ħ⟧} … ¤_{ℂ⟦ħ⟧} \NH_{σ_k}).
\end{align*}
In this paper, we check that $ \NHS_{n+1} $ is a perverse schober on $ \Sym^{n+1} (ℂ) $ in the sense of Dyckerhoff-Wedrich.

\subsection*{Perverse schobers}
The concept of perverse schobers was introduced by Kapranov and Schechtman \cite{Kapranov-Schechtman} as the closest approximation to the notion of perverse sheaf of categories with respect to a given stratification. Their primary use case has been to describe Fukaya categories and similar categories in a way that allows for $ ∞ $-theoretical local constructions, effectively making the treatment more ergonomic compared to global methods.

The precise notion of perverse schober depends on the given base space and its stratification and is often hard to axiomatize. The reason is that a perverse schober on a given base should precisely decatigorify, upon taking $ K_0 $, to the corresponding notion of perverse sheaf on the same base space. Accurate notions of perverse schobers have been given so far for marked disks \cite{Kapranov-Schechtman}, marked surfaces \cite{Christ} and symmetric powers $ \Sym^{k+1} (ℂ) $ \cite{Dyckerhoff-Wedrich}.

\subsection*{Fukaya categories of Coulomb branches}
Fukaya categories of Coulomb branches have been studied in \cite{ADLSZ}, as part of the program to categorify knot invariants via mirror symmetry \cite{Aganagic-I, Aganagic-II}. The result of that paper is an explicit description of the subcategory of $ \Fuk(\M^{×} (Q, d), W) $ given by a certain set of objects $ T_θ $. More precisely, there is a matching of objects $ T_θ $ with red and black dot distributions on the line, and a matching of their hom spaces with linear spans of strand diagrams between two configurations. This amounts to the inclusion $ \Perf(\KLRW) ⊂ \Fuk(\M^{×} (Q, d), W) $.

Let us focus on the case of $ (Q, d) $ being the $ A_1 $-quiver without framing and with arbitrary dimension $ d = n+1 $. In this case, we have $ \M^{×} (Q, d) = \Hilbhor_{n+1} (ℂ^2) $ and $ \KLRW = \NH_{n+1} $. The horizontal Hilbert scheme comes with a map $ π: \Hilbhor_{n+1} (ℂ^2) → \Sym^{n+1} (ℂ) $, which is a fiber bundle over the open locus $ \Sym_{≠}^{n+1} (ℂ) $ consisting of points with pairwise distinct coordinates. This suggests that $ \Fuk(\Hilbhor_{n+1} (ℂ^2), W) $ is the global sections category of a “sheaf of categories” over $ \Sym^{n+1} (ℂ) $.

The schober perspective makes for the hypothesis that \cite{ADLSZ} is in fact the shadow of an equality between two different schobers: the “algebraic” NilHecke schober $ \NHS_{n+1} $ and a “geometric” horizontal Hilbert scheme schober $ F_x ≔ \Fuk(π^{-1} (x), W \restr_{π^{-1} (x)}) $. The present paper is a first step in that direction.

\subsection*{Structure of the paper}
In \autoref{sec:prelim}, we recall the definition of $ A_n $-schobers according to Dyckerhoff-Wedrich \cite{Dyckerhoff-Wedrich}. In \autoref{sec:nh}, we introduce the NilHecke schober $ \NHS_{n+1} $ and check that it satisfies the $ A_n $-schober axioms. In \autoref{sec:examples}, we demonstrate the verification of the schober axioms in two specific examples cases. In \autoref{sec:appchecks}, we sketch certain cases of the proof which have been left out of the main discussion.

\paragraph*{Acknowledgments}
The author would like to thank Mina Aganagic and Peng Zhou for supervision. The author was partially supported by
NWO Rubicon grant 019.232EN.029.

\section{Preliminaries on $ A_n $-schobers}
\label{sec:prelim}
In this section, we recall the definition of $ A_n $-schobers according to Dyckerhoff-Wedrich \cite{Dyckerhoff-Wedrich}. We start with the cubical indexing set $ \{0, 1\}^n $ and its bijective correspondence with compositions of $ n+1 $. After that, we provide explicit descriptions of bifactorization cubes and Beck-Chevalley cubes.

\begin{center}
\begin{tikzpicture}[
  >=Latex, font=\small,
  x=1mm, y=1mm,
  block/.style={draw, rounded corners, inner sep=3pt, minimum height=7mm, align=center},
  note/.style={font=\scriptsize, inner sep=1pt},
  dashedbox/.style={draw, dashed, rounded corners, inner sep=4pt},
  % spacing knobs (tweak these if you need a tighter/wider layout)
  node distance=15mm
]

% --- nodes (compact labels; last step stacked) ---
\node[block] (chi) {$\chi:\{0,1\}^n \to \Cat_∞ $};

\node[block, right=25mm of chi] (Q) {$Q_{\chi}((a, b),(c, d))$};

% place BC slightly to the right of Q, and T directly below BC
\node[block, right=25mm of Q] (BC) {$\mathrm{BC}\big(Q_{\chi}((a, b), (c, d))\big)$};
\node[block, below=8mm of BC] (T) {$T_{(a, b),(c, d)}=\fib\!\big(\mathrm{BC}(\,Q_{\chi}((a, b),(c, d))\,)\big)$};

% --- arrows & labels ---
\draw[->] (chi) -- (Q) node[midway, above, note] {bifactorization};
\draw[->] (Q) -- (BC) node[midway, above, note] {Beck--Chevalley};
\draw[->] (BC) -- (T)  node[midway, right, note] {fiber};

% --- scope: “for each pair (ab,cd) ∈ Comp(n+1)” ---
\node[dashedbox, fit=(Q)(BC)(T)] (foreachbox) {};
\node[note, above=2mm of foreachbox] {(for each $ (a, b), (c, d) \in \Comp(n{+}1)$)};

% --- family condition on the right (brace under T only) ---
\draw [decorate, decoration={brace, mirror, amplitude=6pt}]
  ($(T.south west)+(0,-1.2)$) -- ($(T.south east)+(0,-1.2)$)
  node[midway, below=7pt, note] {supposed to be equivalence (if $ (a, b) = (d, c) $) or vanish (else)};

\end{tikzpicture}
\end{center}

The construction of bifactorization cubes and Beck-Chevalley cubes in principle applies to any given cubical diagram (of categories or parallel functors, respectively). However, the definition of $ A_n $-schobers references a chained and very specific subset of these constructions. The definition starts from the datum of one cubical diagram $ χ: \{0, 1\}^n → \Cat_∞ $. Consequently, for each pair of compositions $ (a, b), (c, d) ∈ \Comp(n+1) $ one constructs the bifactorization cube $ Q_χ((a, b), (c, d)) $. Then one constructs the Beck-Chevalley cube $ \BC(Q_χ ((a, b), (c, d))) $. Finally, one takes the fiber $ T_{(a, b), (c, d)} ≔ \fib(\BC(Q_χ ((a, b), (c, d)))) $. The definition of $ A_n $-schobers entails that these fibers $ T_{(a, b), (c, d)} $ for every $ (a, b), (c, d) ∈ \Comp(n+1) $ satisfy certain conditions.

In \autoref{sec:prelim-cube}, we recall notation for cubical diagrams. In \autoref{sec:prelim-bifcube}, we recall the notion of bifactorization cubes. In \autoref{sec:prelim-bc}, we recall the notion of Beck-Chevalley cubes and make these as explicit as possible. In \autoref{sec:prelim-axioms}, we recall the axioms of $ A_n $-schobers.

\subsection{Cubical diagrams}
\label{sec:prelim-cube}
In this section, we recall cubical diagrams in general. We recall the indexing set $ \{0, 1\}^n $ and its interpretation in terms of compositions of $ n+1 $.

\begin{definition}
A \emph{composition} of a number $ n ≥ 0 $ is an ordered tuple of positive natural numbers $ σ = (n_1, …, n_k) $ summing up to $ n $. In case $ n = 0 $ the empty tuple serves as the unique composition of $ n $.
\end{definition}

There are precisely $ 2^n $ compositions of $ n+1 $. They can be enumerated explicitly in terms of their binary presentation which we recall below.

\begin{definition}
The binary presentation map $ ψ_n: \Comp(n+1) → \{0, 1\}^n $ is defined by
\begin{equation*}
ψ_n (n_1, …, n_k) = 0^{n_1 - 1} 1 0^{n_2 - 1} 1 … 0^{n_k - 1}.
\end{equation*}
The symbol $ ψ $ denotes any such $ ψ_n $ generically.
\end{definition}

We have for instance $ ψ(3, 5) = 0010000 $ and $ ψ(3, 5, 5, 3) = 001000010000100 $. We remark for later reference that the palindromic nature of a composition $ σ $ is reflected in its binary presentation $ ψ(σ) $. For two compositions $ σ, τ ∈ \Comp(n+1) $ we write $ σ ≤ τ $ if $ τ $ is a refinement of $ σ $. With respect to the lexical partial ordering on $ \{0, 1\}^n $, the map $ ψ $ is an isomorphism.

\subsection{Bifactorization cubes}
\label{sec:prelim-bifcube}
In this section, we recall the construction of bifactorization cubes. We make the construction as explicit as possible. As indexing set for the vertices we typically use the binary presentation set $ \{0, 1\}^n $.

The definition of bifactorization cubes starts from a pair of two-element compositions $ (a, b), (c, d) ∈ \Comp(n+1) $. We start by introducing a case distinction by relation between the numbers $ a, b, c, d $.

\begin{lemma}
Let $ ((a, b), (c, d)) ∈ \Comp(n+1) $ be a pair of compositions. Then it is of one of the following forms:
\begin{itemize}
\item $ a = c $ cases
\begin{enumerate}
\item $ ((a, b), (c, d)) = ((c, c+m), (c, c+m)) $ with $ c, m > 0 $.
\item $ ((a, b), (c, d)) = ((a, a), (a, a)) $ with $ a > 0 $.
\item $ ((a, b), (c, d)) = ((b+m, b), (b+m, b)) $ with $ b, m > 0 $.
\end{enumerate}
\item $ a > c $ cases
\begin{enumerate}
\item $ ((a, b), (c, d)) = ((c+l, c), (c, c+l)) $ with $ c, l > 0 $.
\item $ ((a, b), (c, d)) = ((b+m+l, b), (b+m, b+l)) $ with $ b, m, l > 0 $.
\item $ ((a, b), (c, d)) = ((c+l, c+m), (c, c+m+l)) $ with $ c, m, l > 0 $.
\end{enumerate}
\item $ a < c $ cases
\begin{enumerate}
\item $ ((a, b), (c, d)) = ((a, a+m), (a+m, a)) $ with $ a, m > 0 $.
\item $ ((a, b), (c, d)) = ((a+l, a+m), (a+m+l, a)) $ with $ a, d, m > 0 $.
\item $ ((a, b), (c, d)) = ((a, a+m+l), (a+m, a+l)) $ with $ a, d, m > 0 $.
\end{enumerate}
\end{itemize}
\end{lemma}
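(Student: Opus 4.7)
The plan is to enumerate all pairs of two-part compositions of $n+1$ by a double trichotomy. The starting observation is that $((a,b), (c,d)) \in \Comp(n+1)$ means $a, b, c, d > 0$ together with $a+b = c+d = n+1$, so knowing any three of the four parameters forces the fourth. I would first split on whether $a < c$, $a = c$, or $a > c$; the choice simultaneously pins down the opposite ordering on $d$ versus $b$, and this produces the three top-level bullet items.

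Within the $a = c$ bullet, the constraint $a+b = c+d$ immediately gives $b = d$, and a secondary trichotomy on $a$ versus $b$ separates the three sub-forms: $a < b$ writes as $((c, c+m), (c, c+m))$, $a = b$ writes as $((a,a),(a,a))$, and $a > b$ writes as $((b+m, b), (b+m, b))$. Within the $a > c$ bullet, I set $l = a - c > 0$, so that $d = b + l$ follows at once. The remaining degree of freedom is where $b$ sits relative to $c$, and a secondary trichotomy $b = c$, $b < c$, $b > c$ yields the three sub-forms. In sub-case (2), putting $m = c - b > 0$ gives $a = b + m + l$ and $c = b + m$; in sub-case (3), putting $m = b - c > 0$ gives $b = c + m$ and $d = c + m + l$. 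The $a < c$ bullet is handled symmetrically by swapping the roles of $(a, b)$ and $(c, d)$ and running the same argument, with the secondary split now on $d$ versus $a$.

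There is no deep obstacle; the entire proof is a careful bookkeeping exercise. The only things to watch are exhaustiveness and uniqueness, namely that every pair lands in exactly one of the nine cases and that the parametrizations have strictly positive auxiliary parameters by construction. I would also note in passing that the conditions ``$a, d, m > 0$'' stated in the last two sub-cases of $a < c$ appear to be a small typo, presumably intended to read ``$a, l, m > 0$'', since $d$ is either equal to $a$ (which makes the constraint redundant) or expressed in terms of $a$ and $l$.
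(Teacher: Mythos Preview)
Your proposal is correct and follows essentially the same double trichotomy as the paper: first split on $a$ versus $c$, then within each branch split on $b$ versus $a$ (for $a=c$) or $b$ versus $c$ (for $a\neq c$); your $d$ versus $a$ split in the $a<c$ case is equivalent to the paper's $b$ versus $c$ split since $a+b=c+d$ gives $b-c=d-a$. Your observation about the likely typo in the $a<c$ sub-cases is also apt.
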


\begin{proof}
Assume $ a = c $. If $ b > a $, we are in the first case. If $ b = a $, we are in the second case. If $ b < a $, we are in the third case. Assume now $ a > c $. If $ b = c $, we are in the first case. If $ b < c $, we are in the second case. If $ b > c $, we are in the third case. Assume now $ a < c $. If $ b = c $, we are in the first case. If $ b < c $, we are in the second case. If $ b > c $, we are in the third case. This exhausts all cases.
\end{proof}

\begin{remark}
In the remainder of \autoref{sec:prelim}, we focus on bifactorization cubes for compositions $ (a, b), (c, d) $ with $ a = c $ or $ a > c $. The description of the bifactorization cubes $ Q_χ ((a, b), (c, d)) $ in case $ a < c $ is rather analogous to the case $ a > c $.
\end{remark}

We are now ready to recall the concept of bifactorization cubes. The input datum is a cube $ χ: \{0, 1\}^n → \Cat_∞ $ and a pair of two-element compositions $ ((a, b), (c, d)) ∈ \Comp(n+1) $. The bifactorization cube $ Q_χ $ is then a cube of categories that are copies of the vertices of $ χ $. More precisely, certain vertices of $ χ $ appear multiple times while others may appear not at all.

\begin{definition}
Let $ χ: \{0, 1\}^n → \Cat_∞ $ be a cubical diagram of categories. Let $ (a, b), (c, d) ∈ \Comp(n+1) $ be a pair of compositions with $ a ≥ c $. Then the \emph{bifactorization cube} $ Q_χ ((a, b), (c, d)) $ is defined as follows:
\begin{enumerate}
\item If $ ((a, b), (c, d)) = ((c, c+m), (c, c+m)) $, then $ Q_χ ((c, c+m), (c, c+m)) $ is $ c+m+1 $-dimensional and we have
\begin{equation*}
Q_χ ((c, c+m), (c, c+m))_{δ_1, δ_2, ε_1, …, ε_{c-1}, ζ, η_1, …, η_{m-1}} = ε_1 … ε_{c-1} (δ_1 ∨ δ_2) ε_{c-1} … ε_1 ζ 0^{m-1}.
\end{equation*}
\item If $ ((a, b), (c, d)) = ((a, a), (a, a)) $, then $ Q_χ ((a, a), (a, a)) $ is $ a+1 $-dimensional and we have
\begin{equation*}
Q_χ ((a, a), (a, a))_{δ_1, δ_2, ε_1, …, ε_{a-1}} = ε_1 … ε_{a-1} (δ_1 ∨ δ_2) ε_{a-1} … ε_1.
\end{equation*}
\item If $ ((a, b), (c, d)) = ((b+m, b), (b+m, b)) $, then $ Q_χ ((b+m, b), (b+m, b)) $ is $ b+2 $-dimensional and we have
\begin{equation*}
Q_χ ((b+m, b), (b+m, b))_{δ_1, δ_2, ζ, ε_1, …, ε_{b-1}} = 0^{m-1} ζ ε_1 … ε_{c-1} (δ_1 ∨ δ_2) ε_{c-1} … ε_1.
\end{equation*}
\item If $ ((a, b), (c, d)) = ((c+l, c), (c, c+l)) $, then $ Q_χ ((c+l, c), (c, c+l)) $ is $ c+1 $-dimensional and
\begin{equation*}
Q((c+l, c), (c, c+l))_{δ_1, δ_2, ε_1, …, ε_{c-1}} = ε_1 … ε_{c-1} (δ_1 0^{l-1} δ_2) ε_{c-1} … ε_1.
\end{equation*}
\item If $ ((a, b), (c, d)) = ((b+m+l, b), (b+m, b+l)) $, then $ Q_χ ((b+m+l, b), (b+m, b+l)) $ is $ b+2 $-dimensional and
\begin{equation*}
Q((b+m+l, b), (b+m, b+l))_{δ_1, δ_2, ζ, ε_1, …, ε_{b-1}} = 0^{m-1} ζ ε_1 … ε_{b-1} δ_1 0^{l-1} δ_2 ε_{c-1} … ε_1.
\end{equation*}
\item If $ ((a, b), (c, d)) = ((c+l, c+m), (c, c+m+l)) $, then $ Q_χ ((c+l, c+m), (c, c+m+l)) $ is $ c+2 $-dimensional and
\begin{equation*}
Q((c+l, c+m), (c, c+m+l))_{δ_1, δ_2, ε_1, …, ε_{c-1}, ζ} = ε_1 … ε_{c-1} δ_1 0^{l-1} δ_2 ε_{c-1} … ε_1 ζ 0^{m-1}.
\end{equation*}
\end{enumerate}
The edge and side fillings of $ Q_χ $ are compositions of the edge and side fillings of $ χ $.
\end{definition}

\begin{remark}
It seems that there are inaccuracies in Definition 3.14 and Example 3.15 of \cite{Dyckerhoff-Wedrich}. Indeed, $ Q (a1, 1a) $ should read $ \{0, 1\} × 0^{a-2} × \{0, 1\} $ instead of the exponent being $ a-1 $, and the indexing of $ Q(ac, ca) $ should be done by the indices of $ Q((l+1)1, 1(l+1)) $ followed by the sequence $ ε_1, …, ε_c $ and not the other way around as the notation potentially suggests. In the present description, we have tried to repair these inaccuracies.
\end{remark}

\subsection{Beck-Chevalley cubes}
\label{sec:prelim-bc}
In this section, we recall the concept of Beck-Chevalley cubes. Extending beyond \cite{Dyckerhoff-Wedrich}, we make the construction as explicit as possible.

The input datum for the construction is a cubical diagram of categories whose edges are exact functors with right adjoints. To avoid confusion with the schober cubical diagram $ χ: \{0, 1\}^n → \Cat_∞ $, we denote this input datum by $ Q: \{0, 1\}^d → \Cat_∞ $. The Beck-Chevalley cube $ \BC(Q) $ is then a cube whose vertices are parallel functors obtained from composing multiple edge maps of $ Q $. The composition of functors can be depicted graphically by stacking the functors on top of each other. We follow the convention of Dyckerhoff-Wedrich:

\begin{convention}[\cite{Dyckerhoff-Wedrich}]
\label{conv:prelim-bc-functorconv}
A sequence of functors $ F_1, …, F_k $ appearing from top to bottom refers to the composition $ F_k ∘ … ∘ F_1 $.
\end{convention}

The functor at an individual vertex of $ \BC(Q) $ is the compositions of four edge functors of $ Q $. In \autoref{def:prelim-bc-def}, we denote this composition by stacking five vertices of $ Q $ upon each other. Every pair of consequent vertices gives rise to one edge functor of $ Q $, and these four functors are composed from top to bottom.

\begin{definition}
\label{def:prelim-bc-def}
Let $ Q: \{0, 1\}^d → \St $ be a cubical diagram of categeries with right adjunctable exact edges. The \emph{Beck-Chevalley cube} $ \BC(Q) $ is the $ (d-1) $-dimensional cube in the functor category $ \Fun(Q_{010^{d-2}}, Q_{100^{d-2}}) $ given as follows.
\begin{itemize}
\item The \emph{top layer} of $ \BC(Q) $ is given by
\begin{equation*}
\BC(Q)_{β, 0} =
\begin{array}[c]{l}
  Q_{0,1,0^{d-2}} \\
  Q_{\mathbf{0,1},β} \\
  Q_{\mathbf{0,0},β} \\
  Q_{\mathbf{1,0},β} \\
  Q_{1,0,0^{d-2}}
\end{array}
\end{equation*}
\item The \emph{bottom layer} of $ \BC(Q) $ is given by
\begin{equation*}
\BC(Q)_{β, 1} =
\begin{array}[c]{l}
  Q_{0,1,0^{d-2}} \\
  Q_{\mathbf{0,1},β} \\
  Q_{\mathbf{1,1},β} \\
  Q_{\mathbf{1,0},β} \\
  Q_{1,0,0^{d-2}}
\end{array}
\end{equation*}
\end{itemize}
\end{definition}

\begin{figure}
\centering
\begin{tikzpicture}[node distance=0]
%
% first
  \node[anchor=north west] (A-1) at (0.25,0) {$ Q_{01,0^{d-2}} $};
  \node[anchor=north west] (A-2) at (A-1.south west) {$ Q_{01,\beta} $};
  \node[anchor=north west] (A-3) at (A-2.south west) {$ Q_{00,\beta} $};
  \node[anchor=north west] (A-4) at (A-3.south west) {$ Q_{10,\beta} $};
  \node[anchor=north west] (A-5) at (A-4.south west) {$ Q_{10,0^{d-2}} $};
%
% second
  \node[anchor=north west] (B-1) at (2.5,0) {$ Q_{01,0^{d-2}} $};
  \node[anchor=north west] (B-2) at (B-1.south west) {$ Q_{01,\beta} $};
  \node[anchor=north west] (B-3) at (B-2.south west) {$ Q_{00,\beta} $};
  \node[anchor=north west] (B-4) at (B-3.south west) {$ Q_{10,\beta} $};
  \node[anchor=north west] (B-5) at (B-4.south west) {$ Q_{10,\beta'} $};
  \node[anchor=north west] (B-6) at (B-5.south west) {$ Q_{10,\beta} $};
  \node[anchor=north west] (B-7) at (B-6.south west) {$ Q_{10,0^{d-2}} $};
%
% third
  \node[anchor=north west] (C-1) at (5,0) {$ Q_{01,0^{d-2}} $};
  \node[anchor=north west] (C-2) at (C-1.south west) {$ Q_{01,\beta} $};
  \node[anchor=north west] (C-3) at (C-2.south west) {$ Q_{00,\beta} $};
  \node[anchor=north west] (C-4) at (C-3.south west) {$ Q_{00,β'} $};
  \node[anchor=north west] (C-5) at (C-4.south west) {$ Q_{10,\beta'} $};
  \node[anchor=north west] (C-6) at (C-5.south west) {$ Q_{10,\beta} $};
  \node[anchor=north west] (C-7) at (C-6.south west) {$ Q_{10,0^{d-2}} $};
%
% fourth
  \node[anchor=north west] (D-1) at (7.5,0) {$ Q_{01,0^{d-2}} $};
  \node[anchor=north west] (D-2) at (D-1.south west) {$ Q_{01,\beta} $};
  \node[anchor=north west] (D-3) at (D-2.south west) {$ Q_{00,\beta} $};
  \node[anchor=north west] (D-4) at (D-3.south west) {$ Q_{00,β'} $};
  \node[anchor=north west] (D-5) at (D-4.south west) {$ Q_{01,β'} $};
  \node[anchor=north west] (D-6) at (D-5.south west) {$ Q_{00,β'} $};
  \node[anchor=north west] (D-7) at (D-6.south west) {$ Q_{10,\beta'} $};
  \node[anchor=north west] (D-8) at (D-7.south west) {$ Q_{10,\beta} $};
  \node[anchor=north west] (D-9) at (D-8.south west) {$ Q_{10,0^{d-2}} $};
%
%
% fifth
  \node[anchor=north west] (E-1) at (10,0) {$ Q_{01,0^{d-2}} $};
  \node[anchor=north west] (E-2) at (E-1.south west) {$ Q_{01,\beta} $};
  \node[anchor=north west] (E-3) at (E-2.south west) {$ Q_{00,\beta} $};
  \node[anchor=north west] (E-4) at (E-3.south west) {$ Q_{01,β} $};
  \node[anchor=north west] (E-5) at (E-4.south west) {$ Q_{01,β'} $};
  \node[anchor=north west] (E-6) at (E-5.south west) {$ Q_{00,β'} $};
  \node[anchor=north west] (E-7) at (E-6.south west) {$ Q_{10,\beta'} $};
  \node[anchor=north west] (E-8) at (E-7.south west) {$ Q_{10,\beta} $};
  \node[anchor=north west] (E-9) at (E-8.south west) {$ Q_{10,0^{d-2}} $};
%
% sixth
  \node[anchor=north west] (F-1) at (12.5,0) {$ Q_{01,0^{d-2}} $};
  \node[anchor=north west] (F-2) at (F-1.south west) {$ Q_{01,β} $};
  \node[anchor=north west] (F-3) at (F-2.south west) {$ Q_{01,β'} $};
  \node[anchor=north west] (F-4) at (F-3.south west) {$ Q_{00,β'} $};
  \node[anchor=north west] (F-5) at (F-4.south west) {$ Q_{10,\beta'} $};
  \node[anchor=north west] (F-6) at (F-5.south west) {$ Q_{10,\beta} $};
  \node[anchor=north west] (F-7) at (F-6.south west) {$ Q_{10,0^{d-2}} $};
%
% seventh
  \node[anchor=north west] (G-1) at (15,0) {$ Q_{01,0^{d-2}} $};
  \node[anchor=north west] (G-2) at (G-1.south west) {$ Q_{01,β'} $};
  \node[anchor=north west] (G-3) at (G-2.south west) {$ Q_{00,β'} $};
  \node[anchor=north west] (G-4) at (G-3.south west) {$ Q_{10,\beta'} $};
  \node[anchor=north west] (G-5) at (G-4.south west) {$ Q_{10,0^{d-2}} $};
\path[draw, thick, decorate, decoration={mirror, brace}] (A-4.south east) to coordinate[midway] (1-start) (A-4.north east);
\path[draw, thick, decorate, decoration={brace}] (B-6.south west) to coordinate[midway] (1-end) (B-4.north west);
\path[draw, ->] ($ (1-start)!0.2!(1-end) $) -- ($ (1-end)!0.2!(1-start) $) node[midway, above, sloped] {unit};
\path[draw, thick, decorate, decoration={mirror, brace}] (B-5.south east) to coordinate[midway] (2-start) (B-5.south east |- B-3.north east);
\path[draw, thick, decorate, decoration={brace}] (C-5.south west) to coordinate[midway] (2-end) (C-3.north west);
\path[draw, ->] ($ (2-start)!0.2!(2-end) $) -- ($ (2-end)!0.2!(2-start) $) node[midway, above, sloped] {comm};
\path[draw, thick, decorate, decoration={mirror, brace}] (C-4.south east) to coordinate[midway] (3-start) (C-4.north east);
\path[draw, thick, decorate, decoration={brace}] (D-6.south west) to coordinate[midway] (3-end) (D-6.south west |- D-4.north west);
\path[draw, ->] ($ (3-start)!0.2!(3-end) $) -- ($ (3-end)!0.2!(3-start) $) node[midway, above, sloped] {unit};
\path[draw, thick, decorate, decoration={mirror, brace}] (D-5.south east) to coordinate[midway] (4-start) (D-5.south east |- D-3.north east);
\path[draw, thick, decorate, decoration={brace}] (E-5.south west) to coordinate[midway] (4-end) (E-3.north west);
\path[draw, ->] ($ (4-start)!0.2!(4-end) $) -- ($ (4-end)!0.2!(4-start) $) node[midway, above, sloped] {comm};
\path[draw, thick, decorate, decoration={mirror, brace}] (E-4.south east) to coordinate[midway] (5-start) (E-2.north east);
\path[draw, thick, decorate, decoration={brace}] (F-2.south west) to coordinate[midway] (5-end) (F-2.north west);
\path[draw, ->] ($ (5-start)!0.2!(5-end) $) -- ($ (5-end)!0.2!(5-start) $) node[midway, above, sloped] {counit};
\path[draw, thick, decorate, decoration={mirror, brace}] (F-1.north east |- F-3.south east) to coordinate[midway] (6-start) (F-1.north east);
\path[draw, thick, decorate, decoration={mirror, brace}] (F-7.south east) to coordinate[midway] (6B-start) (F-7.south east |- F-5.north east);
\path[draw, thick, decorate, decoration={brace}] (G-2.south west) to coordinate[midway] (6-end) (G-1.north west);
\path[draw, thick, decorate, decoration={brace}] (G-5.south west) to coordinate[midway] (6B-end) (G-4.north west);
\path[draw, ->] ($ (6-start)!0.2!(6-end) $) -- ($ (6-end)!0.2!(6-start) $) node[midway, above, sloped] {func};
\path[draw, ->] ($ (6B-start)!0.2!(6B-end) $) -- ($ (6B-end)!0.2!(6B-start) $) node[midway, above, sloped] {func};
\end{tikzpicture}
\begin{tikzpicture}[scale=0.55]
\newcommand{\bcbasic}{%
\path[draw] (0, 0) coordinate (0-01) -- (2, -0.25) coordinate (0-11) -- (2.5, 0.5) coordinate (0-10) -- (0.5, 0.75) coordinate (0-00) -- cycle;
\begin{scope}[shift={(0, 3)}] \path[draw] (0, 0) coordinate (b-01) -- (2, -0.25) coordinate (b-11) -- (2.5, 0.5) coordinate (b-10) -- (0.5, 0.75) coordinate (b-00) -- cycle; \end{scope}
\begin{scope}[shift={(-1.5, 5.5)}] \path[draw] (0, 0) coordinate (bp-01) -- (2, -0.25) coordinate (bp-11) -- (2.5, 0.5) coordinate (bp-10) -- (0.5, 0.75) coordinate (bp-00) -- cycle; \end{scope}
\path (0-01) ++ (0.35, 0.2) node {\scalebox{0.5}{$ 01 $}};
\path (0-00) ++ (0.1, -0.2) node {\scalebox{0.5}{$ 00 $}};
\path (0-10) ++ (-0.35, -0.2) node {\scalebox{0.5}{$ 10 $}};
\path (0-11) ++ (-0.1, 0.2) node {\scalebox{0.5}{$ 11 $}};
\path (b-01) ++ (0.35, 0.2) node {\scalebox{0.5}{$ 01 $}};
\path (b-00) ++ (0.1, -0.2) node {\scalebox{0.5}{$ 00 $}};
\path (b-10) ++ (-0.35, -0.2) node {\scalebox{0.5}{$ 10 $}};
\path (b-11) ++ (-0.1, 0.2) node {\scalebox{0.5}{$ 11 $}};
\path (bp-01) ++ (0.35, 0.2) node {\scalebox{0.5}{$ 01 $}};
\path (bp-00) ++ (0.1, -0.2) node {\scalebox{0.5}{$ 00 $}};
\path (bp-10) ++ (-0.35, -0.2) node {\scalebox{0.5}{$ 10 $}};
\path (bp-11) ++ (-0.1, 0.2) node {\scalebox{0.5}{$ 11 $}};
\path ($ (0-01)!0.5!(0-10) $) node {\tinymath{$ 0^{d-2} $}};
\path ($ (b-01)!0.5!(b-10) $) node {\tinymath{$ β $}};
\path ($ (bp-01)!0.5!(bp-10) $) node {\tinymath{$ β' $}};
}
\begin{scope}
\bcbasic
\path[draw, ->] ($ (0-01) + (left:0.2) $) to[] ($ (b-01) + (left:0.2) $);
\path[draw, ->] ($ (b-01) + (left:0.2) $) to[] ($ (b-00) + (up:0.2) $);
\path[draw, ->] ($ (b-00) + (up:0.2) $) to[] ($ (b-10) + (0.2, 0.2) $);
\path[draw, ->] ($ (b-10) + (0.2, 0.2) $) to[] ($ (0-10) + (right:0.2) $);
\end{scope}
\begin{scope}[shift={(4, 0)}]
\bcbasic
\path[draw, ->] ($ (0-01) + (left:0.2) $) to[] ($ (b-01) + (left:0.2) $);
\path[draw, ->] ($ (b-01) + (left:0.2) $) to[] ($ (b-00) + (up:0.2) $);
\path[draw, ->] ($ (b-00) + (up:0.2) $) to[] ($ (b-10) + (0.2, 0.2) $);
\path[draw, ->] ($ (b-10) + (0.2, 0.2) $) to[bend right=10] ($ (bp-10) + (right:0.2) $);
\path[draw, ->] ($ (bp-10) + (right:0.2) $) to[bend left=20]  ($ (b-10) + (0.4, 0.4) $);
\path[draw, ->] ($ (b-10) + (0.4, 0.4) $) to[bend left=20] ($ (0-10) + (right:0.2) $);
\end{scope}
\begin{scope}[shift={(8, 0)}]
\bcbasic
\path[draw, ->] ($ (0-01) + (left:0.2) $) to[] ($ (b-01) + (left:0.2) $);
\path[draw, ->] ($ (b-01) + (left:0.2) $) to[] ($ (b-00) + (up:0.2) $);
\path[draw, ->] ($ (b-00) + (up:0.2) $) to[bend left=0] ($ (bp-00) + (up:0.2) $);
\path[draw, ->] ($ (bp-00) + (up:0.2) $) to[bend left=0] ($ (bp-10) + (0.2,0.2) $);
\path[draw, ->] ($ (bp-10) + (0.2,0.2) $) to[bend left=0]  ($ (b-10) + (0.2, 0.2) $);
\path[draw, ->] ($ (b-10) + (0.2, 0.2) $) to[bend left=0] ($ (0-10) + (right:0.2) $);
\end{scope}
\begin{scope}[shift={(12, 0)}]
\bcbasic
\path[draw, ->] ($ (0-01) + (left:0.2) $) to[] ($ (b-01) + (left:0.2) $);
\path[draw, ->] ($ (b-01) + (left:0.2) $) to[] ($ (b-00) + (up:0.2) $);
\path[draw, ->] ($ (b-00) + (up:0.2) $) to[bend left=0] ($ (bp-00) + (up:0.2) $);
\path[draw, ->] ($ (bp-00) + (up:0.2) $) to ($ (bp-01) + (left:0.2) $);
\path[draw, ->] ($ (bp-01) + (left:0.2) $) to[bend left=20] ($ (bp-00) + (up:0.4) $);
\path[draw, ->] ($ (bp-00) + (up:0.4) $) to[bend left=0] ($ (bp-10) + (0.2,0.2) $);
\path[draw, ->] ($ (bp-10) + (0.2,0.2) $) to[bend left=0]  ($ (b-10) + (0.2, 0.2) $);
\path[draw, ->] ($ (b-10) + (0.2, 0.2) $) to[bend left=0] ($ (0-10) + (right:0.2) $);
\end{scope}
\begin{scope}[shift={(16, 0)}]
\bcbasic
\path[draw, ->] ($ (0-01) + (left:0.2) $) to[] ($ (b-01) + (left:0.2) $);
\path[draw, ->] ($ (b-01) + (left:0.2) $) to[] ($ (b-00) + (up:0.2) $);
\path[draw, ->] ($ (b-00) + (up:0.2) $) to[bend right=20] ($ (b-01) + (left:0.4) $);
\path[draw, ->] ($ (b-01) + (left:0.4) $) to ($ (bp-01) + (left:0.2) $);
\path[draw, ->] ($ (bp-01) + (left:0.2) $) to[bend left=20] ($ (bp-00) + (up:0.4) $);
\path[draw, ->] ($ (bp-00) + (up:0.4) $) to[bend left=0] ($ (bp-10) + (0.2,0.2) $);
\path[draw, ->] ($ (bp-10) + (0.2,0.2) $) to[bend left=0]  ($ (b-10) + (0.2, 0.2) $);
\path[draw, ->] ($ (b-10) + (0.2, 0.2) $) to[bend left=0] ($ (0-10) + (right:0.2) $);
\end{scope}
\begin{scope}[shift={(20, 0)}]
\bcbasic
\path[draw, ->] ($ (0-01) + (left:0.2) $) to[] ($ (b-01) + (left:0.2) $);
\path[draw, ->] ($ (b-01) + (left:0.2) $) to ($ (bp-01) + (left:0.2) $);
\path[draw, ->] ($ (bp-01) + (left:0.2) $) to[bend left=20] ($ (bp-00) + (up:0.4) $);
\path[draw, ->] ($ (bp-00) + (up:0.4) $) to[bend left=0] ($ (bp-10) + (0.2,0.2) $);
\path[draw, ->] ($ (bp-10) + (0.2,0.2) $) to[bend left=0]  ($ (b-10) + (0.2, 0.2) $);
\path[draw, ->] ($ (b-10) + (0.2, 0.2) $) to[bend left=0] ($ (0-10) + (right:0.2) $);
\end{scope}
\begin{scope}[shift={(24, 0)}]
\bcbasic
\path[draw, ->] ($ (0-01) + (left:0.2) $) to[] ($ (bp-01) + (left:0.2) $);
\path[draw, ->] ($ (bp-01) + (left:0.2) $) to[bend left=20] ($ (bp-00) + (up:0.4) $);
\path[draw, ->] ($ (bp-00) + (up:0.4) $) to[bend left=0] ($ (bp-10) + (0.2,0.2) $);
\path[draw, ->] ($ (bp-10) + (0.2,0.2) $) to[bend left=0] ($ (0-10) + (right:0.2) $);
\end{scope}
\end{tikzpicture}
\caption{This figure depicts a possible choice of edge maps between vertices in the top layer of the Beck-Chevalley cube $ \BC(Q) $. The upper part of the figure depicts the edge map between the vertices $ \BC(Q)_{β, 0} $ and $ \BC(Q)_{β', 0} $ as the composition of seven natural transformations. The vertex $ \BC(Q)_{β, 0} $ is on the left-most side and the vertex $ \BC(Q)_{β', 0} $ is on the right-most side of the chain of natural transformations. The lower part of the figure depicts the homotopical choices that were made to obtain this chain. The paths on the left-most and right-most graphic are both of length four and stand for the decomposition of four functors. Each of the six steps performs an elementary homotopy operation which translates to a natural transformation of functors. Any other homotopy between the two paths of length four would yield a different choice of edge map between $ \BC(Q)_{β, 0} $ and $ \BC(Q)_{β', 0} $.}
\label{fig:prelim-bc-edgemaptop}
\end{figure}
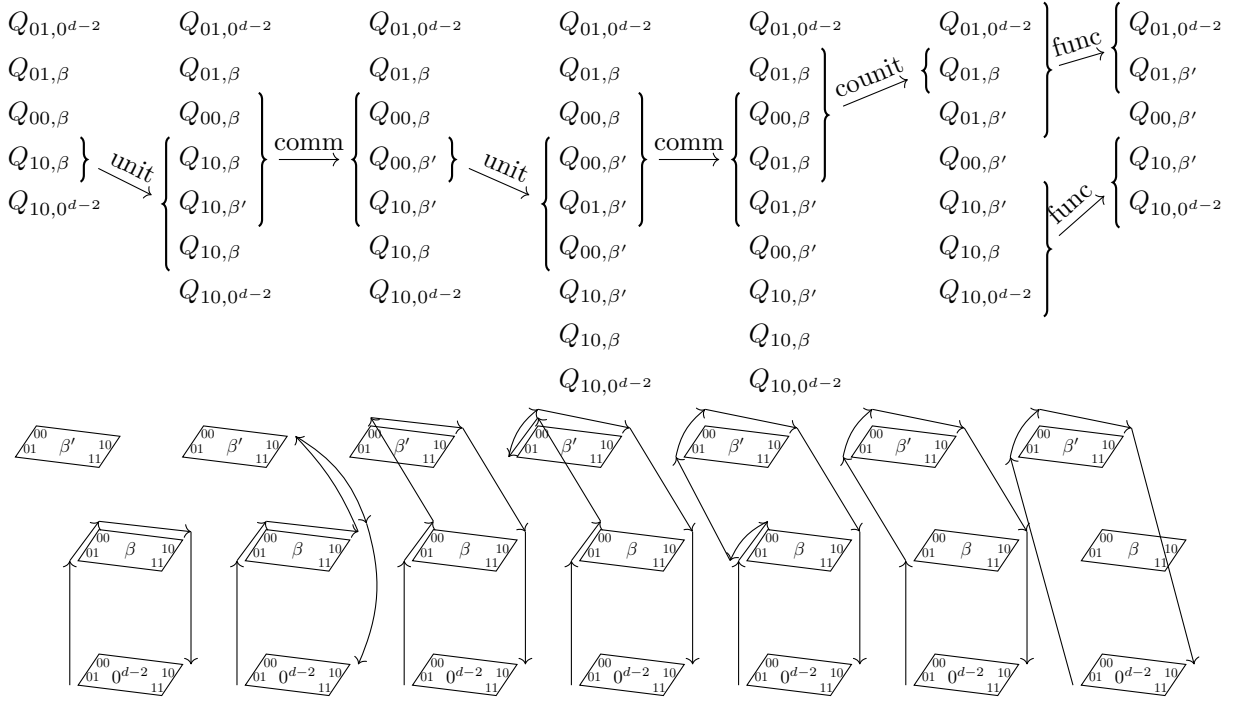

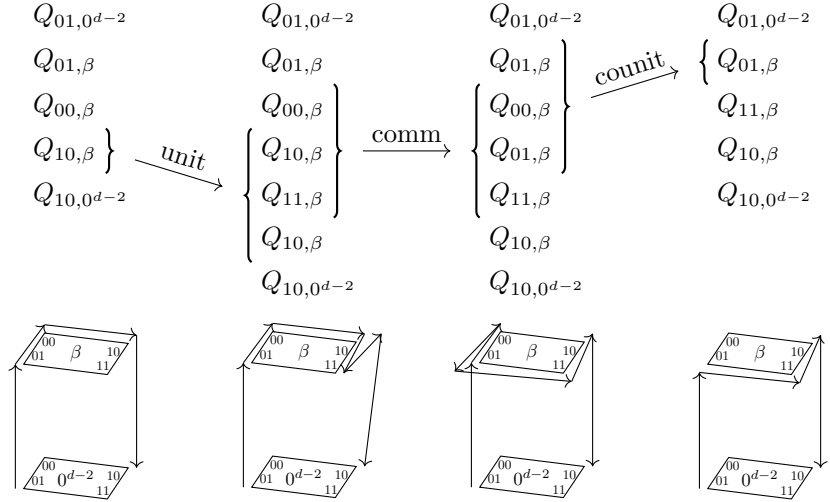
\begin{figure}
\centering
\begin{tikzpicture}[node distance=0]
%
% first
  \node[anchor=north west] (A-1) at (0,0) {$ Q_{01,0^{d-2}} $};
  \node[anchor=north west] (A-2) at (A-1.south west) {$ Q_{01,\beta} $};
  \node[anchor=north west] (A-3) at (A-2.south west) {$ Q_{00,\beta} $};
  \node[anchor=north west] (A-4) at (A-3.south west) {$ Q_{10,\beta} $};
  \node[anchor=north west] (A-5) at (A-4.south west) {$ Q_{10,0^{d-2}} $};
%
% second
  \node[anchor=north west] (B-1) at (3,0) {$ Q_{01,0^{d-2}} $};
  \node[anchor=north west] (B-2) at (B-1.south west) {$ Q_{01,\beta} $};
  \node[anchor=north west] (B-3) at (B-2.south west) {$ Q_{00,\beta} $};
  \node[anchor=north west] (B-4) at (B-3.south west) {$ Q_{10,\beta} $};
  \node[anchor=north west] (B-5) at (B-4.south west) {$ Q_{11,β} $};
  \node[anchor=north west] (B-6) at (B-5.south west) {$ Q_{10,β} $};
  \node[anchor=north west] (B-7) at (B-6.south west) {$ Q_{10,0^{d-2}} $};
% 
% third
  \node[anchor=north west] (C-1) at (6,0) {$ Q_{01,0^{d-2}} $};
  \node[anchor=north west] (C-2) at (C-1.south west) {$ Q_{01,\beta} $};
  \node[anchor=north west] (C-3) at (C-2.south west) {$ Q_{00,\beta} $};
  \node[anchor=north west] (C-4) at (C-3.south west) {$ Q_{01,\beta} $};
  \node[anchor=north west] (C-5) at (C-4.south west) {$ Q_{11,β} $};
  \node[anchor=north west] (C-6) at (C-5.south west) {$ Q_{10,β} $};
  \node[anchor=north west] (C-7) at (C-6.south west) {$ Q_{10,0^{d-2}} $};
%
% fourth
  \node[anchor=north west] (D-1) at (9,0) {$ Q_{01,0^{d-2}} $};
  \node[anchor=north west] (D-2) at (D-1.south west) {$ Q_{01,\beta} $};
  \node[anchor=north west] (D-3) at (D-2.south west) {$ Q_{11,β} $};
  \node[anchor=north west] (D-4) at (D-3.south west) {$ Q_{10,β} $};
  \node[anchor=north west] (D-5) at (D-4.south west) {$ Q_{10,0^{d-2}} $};
\path[draw, thick, decorate, decoration={mirror, brace}] (A-4.south east) to coordinate[midway] (1-start) (A-4.north east);
\path[draw, thick, decorate, decoration={brace}] (B-6.south west) to coordinate[midway] (1-end) (B-4.north west);
\path[draw, ->] ($ (1-start)!0.2!(1-end) $) -- ($ (1-end)!0.2!(1-start) $) node[midway, above, sloped] {unit};
\path[draw, thick, decorate, decoration={mirror, brace}] (B-5.south east) to coordinate[midway] (2-start) (B-5.south east |- B-3.north east);
\path[draw, thick, decorate, decoration={brace}] (C-5.south west) to coordinate[midway] (2-end) (C-3.north west);
\path[draw, ->] ($ (2-start)!0.2!(2-end) $) -- ($ (2-end)!0.2!(2-start) $) node[midway, above, sloped] {comm};
\path[draw, thick, decorate, decoration={mirror, brace}] (C-4.south east) to coordinate[midway] (3-start) (C-2.north east);
\path[draw, thick, decorate, decoration={brace}] (D-2.south west) to coordinate[midway] (3-end) (D-2.south west |- D-2.north west);
\path[draw, ->] ($ (3-start)!0.2!(3-end) $) -- ($ (3-end)!0.2!(3-start) $) node[midway, above, sloped] {counit};
\newcommand{\bcbasictwolayer}{%
\path[draw] (0, 0) coordinate (0-01) -- (2, -0.25) coordinate (0-11) -- (2.5, 0.5) coordinate (0-10) -- (0.5, 0.75) coordinate (0-00) -- cycle;
\begin{scope}[shift={(0, 3)}] \path[draw] (0, 0) coordinate (b-01) -- (2, -0.25) coordinate (b-11) -- (2.5, 0.5) coordinate (b-10) -- (0.5, 0.75) coordinate (b-00) -- cycle; \end{scope}
\path (0-01) ++ (0.35, 0.2) node {\scalebox{0.5}{$ 01 $}};
\path (0-00) ++ (0.1, -0.2) node {\scalebox{0.5}{$ 00 $}};
\path (0-10) ++ (-0.35, -0.2) node {\scalebox{0.5}{$ 10 $}};
\path (0-11) ++ (-0.1, 0.2) node {\scalebox{0.5}{$ 11 $}};
\path (b-01) ++ (0.35, 0.2) node {\scalebox{0.5}{$ 01 $}};
\path (b-00) ++ (0.1, -0.2) node {\scalebox{0.5}{$ 00 $}};
\path (b-10) ++ (-0.35, -0.2) node {\scalebox{0.5}{$ 10 $}};
\path (b-11) ++ (-0.1, 0.2) node {\scalebox{0.5}{$ 11 $}};
\path ($ (0-01)!0.5!(0-10) $) node {\tinymath{$ 0^{d-2} $}};
\path ($ (b-01)!0.5!(b-10) $) node {\tinymath{$ β $}};
}
\begin{scope}[scale=0.55, shift={($ (A-5.south west) + (down:6.5) $)}]
\bcbasictwolayer
\path[draw, ->] ($ (0-01) + (left:0.2) $) to[] ($ (b-01) + (left:0.2) $);
\path[draw, ->] ($ (b-01) + (left:0.2) $) to[] ($ (b-00) + (up:0.2) $);
\path[draw, ->] ($ (b-00) + (up:0.2) $) to[] ($ (b-10) + (0.2, 0.2) $);
\path[draw, ->] ($ (b-10) + (0.2, 0.2) $) to[] ($ (0-10) + (right:0.2) $);
\end{scope}
\begin{scope}[scale=0.55, shift={($ (B-5.south west) + (down:6.5) $)}]
\bcbasictwolayer
\path[draw, ->] ($ (0-01) + (left:0.2) $) to[] ($ (b-01) + (left:0.2) $);
\path[draw, ->] ($ (b-01) + (left:0.2) $) to[] ($ (b-00) + (up:0.2) $);
\path[draw, ->] ($ (b-00) + (up:0.2) $) to[] ($ (b-10) + (0.2, 0.2) $);
\path[draw, ->] ($ (b-10) + (0.2, 0.2) $) to ($ (b-11) + (right:0.2) $);
\path[draw, ->] ($ (b-11) + (right:0.2) $) to ($ (b-10) + (0.6, 0.2) $);
\path[draw, ->] ($ (b-10) + (0.6, 0.2) $) to[] ($ (0-10) + (right:0.2) $);
\end{scope}
\begin{scope}[scale=0.55, shift={($ (C-5.south west) + (down:6.5) $)}]
\bcbasictwolayer
\path[draw, ->] ($ (0-01) + (left:0.2) $) to[] ($ (b-01) + (left:0.2) $);
\path[draw, ->] ($ (b-01) + (left:0.2) $) to[] ($ (b-00) + (up:0.2) $);
\path[draw, ->] ($ (b-00) + (up:0.2) $) to[] ($ (b-01) + (-0.6, -0.2) $);
\path[draw, ->] ($ (b-01) + (-0.6, -0.2) $) to ($ (b-11) + (0.2, -0.2) $);
\path[draw, ->] ($ (b-11) + (0.2, -0.2) $) to ($ (b-10) + (0.2, 0.2) $);
\path[draw, ->] ($ (b-10) + (0.2, 0.2) $) to[] ($ (0-10) + (right:0.2) $);
\end{scope}
\begin{scope}[scale=0.55, shift={($ (D-5.south west) + (down:6.5) $)}]
\bcbasictwolayer
\path[draw, ->] ($ (0-01) + (left:0.2) $) to[] ($ (b-01) + (-0.2, -0.2) $);
\path[draw, ->] ($ (b-01) + (-0.2, -0.2) $) to ($ (b-11) + (0.2, -0.2) $);
\path[draw, ->] ($ (b-11) + (0.2, -0.2) $) to ($ (b-10) + (0.2, 0.2) $);
\path[draw, ->] ($ (b-10) + (0.2, 0.2) $) to[] ($ (0-10) + (right:0.2) $);
\end{scope}
\end{tikzpicture}
\caption{This figure depicts a standard choice of edge map from a vertex in the top layer to the neighbor vertex in the bottom layer of the Beck-Chevalley cube $ \BC(Q) $. The upper part of the figure depicts the edge map $ \BC(Q)_{β, 0} → \BC(Q)_{β, 1} $ and the lower part indicates the underlying homotopical operations, similar to \autoref{fig:prelim-bc-edgemaptop}.}
\label{fig:prelim-bc-edgemapvertical}
\end{figure}

\begin{remark}
Strictly speaking, \autoref{def:prelim-bc-def} is incomplete. For instance, it lacks explanation of the edge and face maps. It is tedious or impossible to give explicit expressions of these maps, because \cite[Construction 3.2]{Dyckerhoff-Wedrich} relies on an existence theorem of Lurie. We however exhibit at least a choice of edge maps explicitly. A choice of edge map between two vertices of the top layer is depicted in \autoref{fig:prelim-bc-edgemaptop}. A choice of edge map between two vertices of the bottom layer can be constructed in a similar fashion. The edge map $ \BC(Q)_{β, 0} → \BC(Q)_{β, 1} $ from top to bottom layer is depicted in \autoref{fig:prelim-bc-edgemapvertical}.
\end{remark}

We finish this section by recalling the notion of higher twist functors.

\begin{definition}
Let $ χ: \Comp(n+1) → \St $ be a cubical diagram with right adjunctable exact edges. Let $ (a, b), (c, d) ∈ \Comp(n+1) $ be a pair of compositions. Then the \emph{twist functor} $ T_{(a, b), (c, d)}: χ_{(a, b)} → χ_{(c, d)} $ is defined as $ T_{(a, b), (c, d)} = \fib(\BC(Q_χ ((a, b), (c, d)))) $.
\end{definition}

\subsection{$ A_n $-schobers}
\label{sec:prelim-axioms}
In this section, we recall the definition of $ A_n $-schobers. Such a schober consists of a cubical diagram of stable $ ∞ $-categories which satisfies certain properties. For the purposes of the present paper, we shall ignore most of the higher $ ∞ $-categorical structure contained in the datum of the cubical diagram, since the NilHecke algebras are ungraded associative algebras and therefore the structure is entirely understood in terms of the abelian world.

\begin{definition}[{\cite[Definition 3.17]{Dyckerhoff-Wedrich}}]
A cubical diagram $ χ: \Comp(n+1) → \St $ of stable $ ∞ $-categories is an \emph{$ A_n $-schober} if the following conditions hold:
\begin{itemize}
\item \textbf{Adjunctability:} All functors in $ χ $ admit right adjoints.
\item \textbf{Recursiveness:} For every proper composition $ (n_1, …, n_k) ∈ \Comp(n+1) $ and every $ 1 ≤ i ≤ k $, the restriction of $ χ $ along $ \Comp(n_i) \embeds \Comp(n+1), τ ↦ (n_1, …, n_{i-1}, τ, n_{i+1}, …, n_k) $ is an $ A_{n_i-1} $-schober.
\item \textbf{Far-commutativity:} For every composition $ (a, b) ∈ \Comp(n+1) $ and compositions $ c_0 ≤ c_1 $ of $ a $ and compositions $ d_0 ≤ d_1 $ of $ b $ the Beck-Chevalley cube of the following square is a natural isomorphism of functors:
\begin{equation*}
\begin{tikzcd}
χ_{(c_0, d_0)} \arrow[r] \arrow[d] & χ_{(c_0, d_1)} \arrow[d] \\
χ_{(c_1, d_0)} \arrow[r] & χ_{(c_1, d_1)}.
\end{tikzcd}
\end{equation*}
\item \textbf{Twist invertibility:} For every composition $ (a, b) ∈ \Comp(n+1) $, the twist functor $ T_{(a, b), (b, a)}: χ_{(a, b)} → χ_{(b, a)} $ is an equivalence.
\item \textbf{Defect vanishing:} For every pair of compositions $ (a, b), (c, d) ∈ \Comp(n+1) $ with $ (a, b) ≠ (d, c) $, the twist functor $ T_{(a, b), (c, d)}: χ_{(a, b)} → χ_{(c, d)} $ is the zero functor.
\end{itemize}
\end{definition}

\begin{remark}
The square in the definition of far-commutativity is a 2-dimensional cubical diagram of categories. Its Beck-Chevalley cube is thus a 1-dimensional diagram of parallel functors, in other words a natural transformation. The far-commutativity axiom demands that this natural transformation is an isomorphism.
\end{remark}

%%Dyckerhoff and Wedrich also define a notion of factorizing family of $ A_n $-schobers, which entails that a schober $ χ_n $ is provided for every $ n ≥ 0 $ and they satisfy certain interrelations.
%%\begin{definition}[{\cite[Definition 4.1]{Dyckerhoff-Wedrich}}]
%%A \emph{factorizing family} of $ A_n $-schobers is a monoidal functor $ χ_{•}: \Comp → \operatorname{Pr}_k^{st} $.
%%\end{definition}

\section{The NilHecke schober}
\label{sec:nh}
In this section we define the NilHecke schober $ \NHS_{n+1} $ and prove that it is an $ A_n $-schober in the sense of Dyckerhoff-Wedrich.

\begin{theorem}[Main theorem]
The cubical diagram $ \NHS_{n+1}: \Comp(n+1) → \St $ is an $ A_n $-schober.
\end{theorem}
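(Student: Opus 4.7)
The plan is to verify the five axioms of \autoref{sec:prelim-axioms} in order of increasing difficulty. Adjunctability is essentially immediate: every edge of $\NHS_{n+1}$ corresponds to a refinement $\sigma \leq \tau$ of compositions of $n+1$, and the associated functor $\Perf(\NH_\sigma) \to \Perf(\NH_\tau)$ is the induction $- \otimes_{\NH_\sigma} \NH_\tau$ along the natural map $\NH_\sigma \to \NH_\tau$, with restriction of scalars as its right adjoint. Recursiveness then follows by a Kunneth-type argument: fixing the factors $\NH_{n_j}$ for $j \neq i$ and letting $\tau$ vary in $\Comp(n_i)$ realizes the restricted sub-cube as the image of $\NHS_{n_i}$ under tensoring with a fixed bimodule on both sides, so any $A_{n_i-1}$-schober identities for $\NHS_{n_i}$ transport verbatim.

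Far-commutativity is the observation that independent refinements $c_0 \leq c_1$ of the $a$-factor and $d_0 \leq d_1$ of the $b$-factor act on genuinely disjoint tensor slots of $\NH_a \otimes_{\CC\llbracket\hbar\rrbracket} \NH_b$. Unwinding the Beck-Chevalley square via \autoref{def:prelim-bc-def}, the natural transformation in question is a composite of unit and counit maps affecting disjoint slots and therefore commuting strictly, so the axiom reduces to the symmetry of the tensor product over $\CC\llbracket\hbar\rrbracket$.

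The main obstacle, and the real content of the theorem, is the joint verification of twist invertibility and defect vanishing. The strategy is to use the explicit descriptions of the bifactorization cube $Q_\chi((a,b),(c,d))$ from \autoref{sec:prelim-bifcube} together with the Beck-Chevalley cube from \autoref{sec:prelim-bc} to express each twist $T_{(a,b),(c,d)}$ as a concrete finite totalization of $\NH$-bimodule functors, one computation for each of the nine cases of the classification lemma. Presenting the unit and counit natural transformations via the strand-and-dot calculus for the NilHecke algebra, the resulting complex should reduce on inspection either to a Koszul-type resolution whose totalization is the graph bimodule of the transposition $(a,b) \mapsto (b,a)$, yielding the required equivalence exactly when $(a,b) = (d,c)$, or to a contractible complex, yielding zero whenever $(a,b) \neq (d,c)$. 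Representative low-dimensional instances of this diagrammatic computation are carried out in \autoref{sec:examples}, and the remaining combinatorially analogous cases are sketched in \autoref{sec:appchecks}.
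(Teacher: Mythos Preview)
Your sketch of adjunctability contains a directional error: for $\sigma \le \tau$ (meaning $\tau$ refines $\sigma$) the inclusion of algebras runs $\NH_\tau \hookrightarrow \NH_\sigma$, not the other way, and the edge functor $\Perf\NH_\sigma \to \Perf\NH_\tau$ in the definition of $\NHS_{n+1}$ is \emph{restriction} of scalars along this inclusion, with right adjoint the coinduction $\Ind^\sigma_\tau(T) = \Hom_{\NH_\tau}(\NH_\sigma, T)$. Your description via $-\otimes_{\NH_\sigma}\NH_\tau$ along a map $\NH_\sigma \to \NH_\tau$ does not type-check, because no such map exists. This is not merely cosmetic: the Beck--Chevalley analysis depends on which adjoint is taken, and the paper's computations use the $\Hom$-type right adjoint throughout, so that vertices of the BC cube are mapping spaces \emph{out of} finite sets of shuffles rather than tensor products.

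More seriously, your treatment of twist invertibility and defect vanishing is a hope rather than a proof. You assert that the totalization ``should reduce on inspection'' to a Koszul-type resolution or a contractible complex, but supply no mechanism. The paper's argument is considerably more concrete and is not Koszul in flavour: each vertex of the Beck--Chevalley cube, evaluated at $T \in \Perf\NH_{c,c+m}$, is identified with $\Map(\Anycross^i_\tau \times \Mincross^i_\tau, T)$ for explicit sets of shuffle permutations; each relevant edge map is shown to be precomposition by an injection $\delta$ on these index sets and is therefore a split surjection; one then iteratively takes kernels along the palindromic coordinate axes $\beta_{c-1},\dots,\beta_1$. The invariant that drives the induction is the number of forced inner crossings in the $\Mincross$ set, which increases by one at each step. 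After exhausting these coordinates the remaining one-dimensional cube has either identical index sets on both ends (zero fiber, hence defect vanishing) or a single full-crossing diagram (yielding the flip equivalence $\Perf\NH_{a,b} \to \Perf\NH_{b,a}$, hence twist invertibility). The $\delta$-splitting of shuffles and this crossing-count bookkeeping are the actual content of the proof, and your proposal does not contain them.
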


In \autoref{sec:nh-algebra}, we start by recalling NilHecke algebras from \cite{ADLSZ} and treat their strand diagram calculus. In \autoref{sec:nh-indres}, we introduce the NilHecke schober and define auxiliary notation and diagrammatic calculus to deal with repeated induction and restriction functors. In \autoref{sec:nh-easy}, we check that $ \NHS_{n+1} $ satisfies the adjunctability, recursiveness and far-commutativity axioms. In \autoref{sec:nh-ccmccm}, we provide a strategy to evaluate Beck-Chevalley cubes for $ \NHS_{n+1} $ and use this strategy to check the defect vanishing axiom for pairs of compositions of the form $ ((c, c+m), (c, c+m)) $. The strategy applies similarly to all other pairs of compositions and is suitable for proving both defect vanishing and twist invertibility axioms. We therefore move considerations for all other types of pairs of compositions to \autoref{sec:appchecks}.

\subsection{NilHecke algebras}
\label{sec:nh-algebra}
In this section, we recall NilHecke algebras. In the visual sense, the NilHecke algebra is a strand algebra with dots and bigon vanishing relation over the base ring $ ℂ⟦ħ⟧ $. In the algebraic sense, the algebra is similar to the smash product of a polynomial algebra with the symmetric group but with modified multiplication operation.

The NilHecke algebra $ \NH_k $ is a $ ℂ⟦ħ⟧ $-linear algebra given as vector space by $ ℂ[X_1, …, X_n] ¤ ℂ[S_n]⟦ħ⟧ $. The strand diagram associated with $ X_1^{k_1} … X_n^{k_n} ¤ s ∈ \NH_k $ is the diagram on $ n $ strands which permutes the strands according to $ s $ and above that applies $ k_1, …, k_n $ dots to the strands. This correspondence is depicted in \autoref{fig:diag-nh-stranddiag}. The multiplication operation of the algebra is different from the semidirect product $ ℂ[X_1, …, X_n] \rtimes ℂ[S_n] $ and incorporates the nil aspect of the algebra. For two basis elements $ f ¤ s $ and $ g ¤ t $, the product $ (f ¤ s) (g ¤ t) $ is given by stacking the corresponding strand diagrams on top of each other and applying a set of rules to arrive at a $ ℂ⟦ħ⟧ $-linear combination of standard strand diagrams. The reduction rules are illustrated in \autoref{fig:diag-nh-diagrules}. We shall fix terminology as follows:

\begin{definition}
Let $ k ≥ 1 $. The \emph{NilHecke algebra} $ \NH_k $ is $ ℂ[X_1, …, X_n] ¤ ℂ[S_n]⟦ħ⟧ $ with the following resolving resolving rules:
\begin{itemize}
\item Bigons vanish in the fashion $ s_i^2 = 0 $.
\item Local dot-pass crossings are resolved in the fashion $ X_i s_i - s_i X_{i+1} = ħ \id $ and $ s_i X_i - X_{i+1} s_i = ħ \id $.
\item Braid crossings are resolved in the form $ s_i s_{i+1} s_i = s_{i+1} s_i s_{i+1} $.
\end{itemize}
For $ τ = (τ_1, …, τ_k) ∈ \Comp(n) $ the \emph{NilHecke algebra} $ \NH_τ $ is defined as
\begin{equation*}
\NH_τ ≔ \NH_{τ_1} ¤ … ¤ \NH_{τ_k}.
\end{equation*}
\end{definition}

\begin{example}
For $ σ = (1, …, 1) ∈ \Comp(n) $ we have $ \NH_σ = ℂ[X_1, …, X_n]⟦ħ⟧ $. For $ σ = (2) ∈ \Comp(2) $ we have $ \NH_σ = ℂ[X_1, X_2] ¤ \vspan(e, s)⟦ħ⟧ $ with $ s X_1 = X_2 s + ħ $ and $ s X_2 = X_1 s - ħ $ and $ s^2 = 0 $.
\end{example}

\begin{convention}
\label{conv:nh-algebra-strandconv}
Strand diagrams are read from bottom to top, and composition $ A · B $ denotes the diagram $ A $ sitting on top of $ B $ as illustrated in \autoref{fig:diag-nh-mult}.
\end{convention}

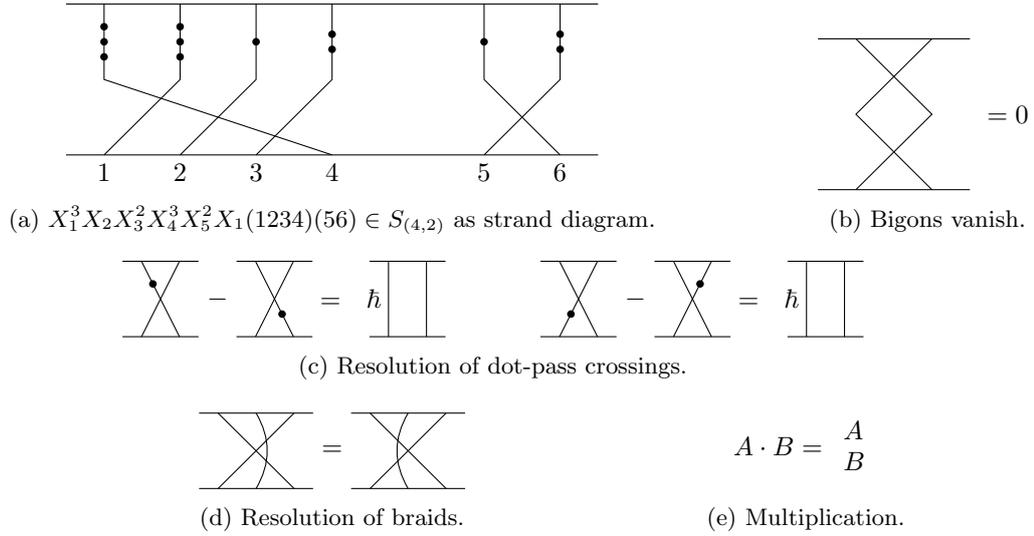
\begin{figure}
\centering
\begin{subfigure}{0.65\linewidth}
\centering
\begin{tikzpicture}
\path[draw] (0.5, 0) -- (7.5, 0);
\path[draw] (0.5, 2) -- (7.5, 2);
\path[draw] (1, 0) node[below] {1} -- (2, 1) -- (2, 2) coordinate[pos=0.3] (A1) coordinate[pos=0.5] (A2) coordinate[pos=0.7] (A3);
\path[draw] (2, 0) node[below] {2} -- (3, 1) -- (3, 2) coordinate[pos=0.5] (B1);
\path[draw] (3, 0) node[below] {3} -- (4, 1) -- (4, 2) coordinate[pos=0.4] (C1) coordinate[pos=0.6] (C2);
\path[draw] (4, 0) node[below] {4} -- (1, 1) -- (1, 2) coordinate[pos=0.3] (D1) coordinate[pos=0.5] (D2) coordinate[pos=0.7] (D3);
\path[draw] (6, 0) node[below] {5} -- (7, 1) -- (7, 2) coordinate[pos=0.4] (E1) coordinate[pos=0.6] (E2);
\path[draw] (7, 0) node[below] {6} -- (6, 1) -- (6, 2) coordinate[pos=0.5] (F1);
\foreach \i in {A1,A2,A3,B1,C1,C2,D1,D2,D3,E1,E2,F1} {\path[fill] (\i) circle[radius=0.05];};
\end{tikzpicture}
\caption{$ X_1^3 X_2 X_3^2 X_4^3 X_5^2 X_1 (1234)(56) ∈ S_{(4, 2)} $ as strand diagram.}
\label{fig:diag-nh-stranddiag}
\end{subfigure}
\hspace{0.05\linewidth}
\begin{subfigure}{0.2\linewidth}
\centering
\begin{tikzpicture}[yscale=1]
\path[draw] (0.5, 0) -- (2.5, 0);
%\path[draw] (0.5, 1) -- (2.5, 1);
\path[draw] (0.5, 2) -- (2.5, 2);
\path[draw] (1, 0) -- (2, 1) -- (1, 2);
\path[draw] (2, 0) -- (1, 1) -- (2, 2);
\path (3, 1) node {$ = 0 $};
\end{tikzpicture}
\caption{Bigons vanish.}
\label{fig:diag-nh-bigons}
\end{subfigure}
\\[2ex]
\begin{subfigure}{0.64\linewidth}
\centering
\begin{tikzpicture}
\begin{scope}[scale=0.5]
\path[draw] (0.5, 0) -- (2.5, 0);
\path[draw] (0.5, 2) -- (2.5, 2);
\path[draw] (1, 0) -- (2, 2);
\path[draw] (2, 0) -- (1, 2) coordinate[pos=0.7] (A);
\path[fill] (A) circle[radius=0.1];
\path (3, 1) node {$ - $};
\begin{scope}[shift={(3, 0)}]
\path[draw] (0.5, 0) -- (2.5, 0);
\path[draw] (0.5, 2) -- (2.5, 2);
\path[draw] (1, 0) -- (2, 2);
\path[draw] (2, 0) -- (1, 2) coordinate[pos=0.3] (A);
\path[fill] (A) circle[radius=0.1];
\end{scope}
\path (6.5, 1) node {$ = ~~ ħ $};
\begin{scope}[shift={(6.5, 0)}]
\path[draw] (0.5, 0) -- (2.5, 0);
\path[draw] (0.5, 2) -- (2.5, 2);
\path[draw] (1, 0) -- (1, 2);
\path[draw] (2, 0) -- (2, 2);
\end{scope}
\end{scope}
\begin{scope}[scale=0.5, shift={(11, 0)}]
\path[draw] (0.5, 0) -- (2.5, 0);
\path[draw] (0.5, 2) -- (2.5, 2);
\path[draw] (1, 0) -- (2, 2) coordinate[pos=0.3] (A);
\path[draw] (2, 0) -- (1, 2);
\path[fill] (A) circle[radius=0.1];
\path (3, 1) node {$ - $};
\begin{scope}[shift={(3, 0)}]
\path[draw] (0.5, 0) -- (2.5, 0);
\path[draw] (0.5, 2) -- (2.5, 2);
\path[draw] (1, 0) -- (2, 2) coordinate[pos=0.7] (A);
\path[draw] (2, 0) -- (1, 2);
\path[fill] (A) circle[radius=0.1];
\end{scope}
\path (6.5, 1) node {$ = ~~ ħ $};
\begin{scope}[shift={(6.5, 0)}]
\path[draw] (0.5, 0) -- (2.5, 0);
\path[draw] (0.5, 2) -- (2.5, 2);
\path[draw] (1, 0) -- (1, 2);
\path[draw] (2, 0) -- (2, 2);
\end{scope}
\end{scope}
\end{tikzpicture}
\caption{Resolution of dot-pass crossings.}
\label{fig:diag-nh-dotpass}
\end{subfigure}
\\[2ex]

\begin{subfigure}{0.45\linewidth}
\centering
\begin{tikzpicture}[scale=0.5]
\path[draw] (0.5, 0) -- (3.5, 0);
\path[draw] (0.5, 2) -- (3.5, 2);
\path[draw] (1, 0) -- (3, 2);
\path[draw] (3, 0) -- (1, 2);
\path[draw, bend right] (2, 0) to (2, 2);
\begin{scope}[shift={(4, 0)}]
\path[draw] (0.5, 0) -- (3.5, 0);
\path[draw] (0.5, 2) -- (3.5, 2);
\path[draw] (1, 0) -- (3, 2);
\path[draw] (3, 0) -- (1, 2);
\path[draw, bend left] (2, 0) to (2, 2);
\end{scope}
\path (4, 1) node {$ = $};
\end{tikzpicture}
\caption{Resolution of braids.}
\label{fig:diag-nh-nobraid}
\end{subfigure}
\hspace{0.05\linewidth}
\begin{subfigure}{0.2\linewidth}
\centering
\begin{tikzpicture}
\path node {$ A · B = \begin{array}{l} A \\ B \end{array} $};
\end{tikzpicture}
\caption{Multiplication.}
\label{fig:diag-nh-mult}
\end{subfigure}
\caption{This figure illustrates how to interpret elements of the NilHecke algebra as strand diagrams. It also illustrates the relations of the NilHecke algebra in terms of strand diagrams. Multiplication of strand diagrams is given by stacking them upon each other. It is worth remembering that strand diagrams are read from bottom to top. In particular, a crossings in a strand diagrams correspond to the permutation given by reading the strands from bottom to top. For instance, in \autoref{fig:diag-nh-stranddiag} the depicted crossing corresponds to the permutation $ (1234)(56) $.}
\label{fig:diag-nh-diagrules}
\end{figure}

We start by recalling several standard facts relating different NilHecke algebras among each other. When $ σ ∈ \Comp(n+1) $, we define the $ i $-th \emph{block} as the sequence of indices
\begin{equation*}
\sum_{j = 1}^{i-1} σ_j + 1, …, \sum_{j = 1}^{i-1} σ_j + σ_i.
\end{equation*}

\begin{definition}
Let $ σ = (σ_1, …, σ_k) ∈ \Comp(n+1) $ and $ τ = (τ_{1, 1}, …, τ_{1, m_1}, …, τ_{k, 1}, …, τ_{k, m_k}) $ be a refinement of $ σ $. A $ (σ, τ) $-\emph{shuffle} is a permutation $ s ∈ S_{n+1} $ with the following properties:
\begin{itemize}
\item For every $ i = 1, …, k $ and $ j = 1, …, m_j $, the permutation $ s $ maps the $ τ_{i, j} $ block into the $ σ_i $ block.
\item The permutation $ s $ is strictly increasing on each $ τ_{i, j} $ block.
\end{itemize}
\end{definition}

The notion of $ (σ, τ) $-shuffles is illustrated in \autoref{fig:nh-algebra-shuffles}. Note that the finer composition $ τ $ sits below the coarser composition $ σ $. Visually speaking, the smaller boxes shuffle into the larger boxes which sit on top of them. With this definition in mind, the following lemma is an easy graphical consequence.

\begin{lemma}
\label{th:diag-algebra-moduledecomp}
As right $ \NH_τ $-module, we have a natural isomorphism
\begin{equation*}
\NH_σ = \bigoplus_{α ∈ S_{σ, τ}} α \NH_τ.
\end{equation*}
\end{lemma}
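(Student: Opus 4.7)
The plan is to exhibit the decomposition by means of the standard ``permutations on top, dots on bottom'' basis. First I would establish that, in addition to the defining basis $\{X^k \cdot s : s \in S_n,\, k \in \mathbb{Z}_{\geq 0}^n\}$ of $\NH_n$, the collection $\{s \cdot X^k\}$ is also a $\mathbb{C}\llbracket \hbar \rrbracket$-basis. Graphically this amounts to pushing every dot below every crossing; algebraically it follows from the dot-pass relations $s_i X_i = X_{i+1} s_i + \hbar$ and $s_i X_{i+1} = X_i s_i - \hbar$, because each application reduces the permutation factor by at most one adjacent transposition. Hence the change-of-basis matrix between $\{X^k s\}$ and $\{s X^k\}$ is lower triangular with respect to the length of $s$ and therefore invertible. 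Tensoring, $\NH_\sigma$ acquires the basis $\{s \cdot X^k : s \in S_\sigma,\, k \in \mathbb{Z}_{\geq 0}^{n+1}\}$, where $S_\sigma = S_{\sigma_1} \times \cdots \times S_{\sigma_k}$ is the Young subgroup sitting inside $S_{n+1}$.

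Second I would invoke the parabolic coset decomposition $S_\sigma = \bigsqcup_{\alpha \in S_{\sigma,\tau}} \alpha\, S_\tau$. Because $\tau$ refines $\sigma$ we have $S_\tau \subset S_\sigma$, and the $(\sigma,\tau)$-shuffles of the lemma's definition are precisely the minimal-length representatives of the left cosets $S_\sigma / S_\tau$: the block-into-block condition puts $\alpha$ into $S_\sigma$, while the strictly-increasing condition on each $\tau_{i,j}$-block selects the unique representative. Diagrammatically this is the statement that any permutation respecting $\sigma$-blocks can be drawn uniquely with a within-$\tau$-blocks permutation $\beta$ on the bottom and a shuffle $\alpha$ on top interleaving the $\tau$-blocks inside their shared $\sigma$-block.

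Combining these two steps, the basis of $\NH_\sigma$ regroups as $\{\alpha\beta\, X^k : \alpha \in S_{\sigma,\tau},\, \beta \in S_\tau,\, k \geq 0\}$. Since both $\beta$ and $X^k$ are supported inside the $\tau$-blocks, the element $\beta X^k$ lies in the subalgebra $\NH_\tau \subset \NH_\sigma$, and $\{\beta X^k\}$ is itself a basis of $\NH_\tau$ by the very same argument applied one level down. Hence the left-multiplication map
\[
\bigoplus_{\alpha \in S_{\sigma,\tau}} \NH_\tau \longrightarrow \NH_\sigma,\qquad (y_\alpha)_\alpha \longmapsto \sum_\alpha \alpha \cdot y_\alpha,
\]
is a morphism of right $\NH_\tau$-modules that carries the obvious basis of the source bijectively onto the basis of the target. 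Naturality in $\sigma$ and $\tau$ is automatic from the formula.

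The only mildly subtle step is the first one, justifying that $\{s \cdot X^k\}$ forms a basis; the rest is combinatorial bookkeeping that matches exactly the intuition that an $\NH_\sigma$-diagram decomposes into an $\NH_\tau$-diagram topped with a shuffle. I expect no substantive obstacle.
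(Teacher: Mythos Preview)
Your argument is correct and is precisely the algebraic unpacking of what the paper means by ``an easy graphical consequence'': the paper does not give a written proof at all, only the remark preceding the lemma that the shuffle picture in the figure makes it evident. Your coset decomposition $S_\sigma = \bigsqcup_\alpha \alpha S_\tau$ combined with the $\{s\,X^k\}$ basis is exactly the content of that picture, so there is no real difference in approach, only in the level of detail.
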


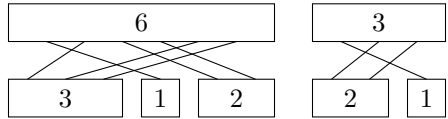
\begin{figure}
\centering
\begin{tikzpicture}
\shufflebox{(0, 0)}{3}{A}{0}
\shufflebox{(1.75, 0)}{1}{B}{0}
\shufflebox{(2.5, 0)}{2}{C}{0}
\shufflebox{(4, 0)}{2}{D}{0}
\shufflebox{(5.25, 0)}{1}{E}{0}
\shufflebox{(0, 1)}{6}{F}{0.25}
\shufflebox{(4, 1)}{3}{G}{0.125}
\path[draw] (A-T1) -- (F-B2);
\path[draw] (A-T2) -- (F-B5);
\path[draw] (A-T3) -- (F-B6);
\path[draw] (B-T1) -- (F-B1);
\path[draw] (C-T1) -- (F-B3);
\path[draw] (C-T2) -- (F-B4);
\path[draw] (D-T1) -- (G-B2);
\path[draw] (D-T2) -- (G-B3);
\path[draw] (E-T1) -- (G-B1);
\path (A-C) node {3};
\path (B-C) node {1};
\path (C-C) node {2};
\path (D-C) node {2};
\path (E-C) node {1};
\path (F-C) node {6};
\path (G-C) node {3};
\end{tikzpicture}
\caption{This figure illustrates the notion of $ (σ, τ) $-shuffles. The characteristic property is that within each $ τ $ box, the outgoing strands connect upwards to strictly increasing slots of the corresponding $ σ $ box. In other words there are no crossings among the strands ascending from each $ τ $ box. In depicted case, we have $ σ = (6, 3) $ and $ τ = (3, 1, 2, 2, 1) $.}
\label{fig:nh-algebra-shuffles}
\end{figure}

\subsection{The NilHecke schober}
\label{sec:nh-indres}
In this section, we introduce the NilHecke schober as a cubical diagram $ \NHS_{n+1}: \{0, 1\}^n → \St $. After providing the definition, we explain the content of the edge maps of the diagram. These adge maps are induction and restriction functors for right modules over NilHecke algebras, and we describe them in terms of diagrammatic calculus.

\begin{definition}
The \emph{NilHecke schober} is the cubical diagram $ \NHS_{n+1}: \Comp(n+1) → \St $ is defined by $ τ ↦ \Perf \NH_τ $ together with the restriction functors $ \Res^σ_τ: \Perf \NH_σ → \Perf \NH_τ $.
\end{definition}

\begin{remark}
We work with right modules over the NilHecke algebras. In particular $ \Perf \NH_σ $ denotes the category of perfect complexes of right modules. Since induction and restriction functors of perfect modules can be developed entirely in the abelian world, we shall be somewhat sloppy with terminology. Given an element $ T ∈ \Perf \NH_σ $, we shall for instance refer to its forgetful object in the category of chain complexes as a vector space instead of a complex. This terminology serves merely to distinguish the $ \NH_σ $-module structure from the underlying linear structure.
\end{remark}

Whenever $ σ ≤ τ $ are two compositions of $ n+1 $, we have an inclusion of algebras $ \NH_τ ⊂ \NH_σ $. This gives rise to a restriction functor $ \Res_τ^σ: \Perf \NH_σ → \Perf \NH_τ $ and its right adjoint $ \Ind_τ^σ: \Perf \NH_τ → \Perf \NH_σ $. They are given by
\begin{equation*}
\Ind_τ^σ (T) = \Hom_{\NH_τ} (\NH_σ, T), \quad \Res_τ^σ (T) = T.
\end{equation*}
The Hom space $ \Hom_{\NH_τ} (\NH_σ, T) $ refers to the right $ \NH_τ $-linear mappings $ \NH_σ → T $. The left $ \NH_σ $-action on $ \NH_σ $ equips $ \Ind_τ^σ (T) $ with a right $ \NH_σ $-action as desired.

\begin{lemma}
\label{th:diag-indres-ind}
Let $ σ ≤ τ $ be two compositions of $ n+1 $ and let $ T ∈ \Perf \NH_τ $. Then as vector spaces we have $ \Ind^σ_τ (T) = \Map(S_{σ, τ}, T) $.
\end{lemma}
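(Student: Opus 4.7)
The plan is to deduce the statement directly from \autoref{th:diag-algebra-moduledecomp} by applying $\Hom_{\NH_τ}(-, T)$ to the decomposition of $\NH_σ$ as a right $\NH_τ$-module. The only subtleties are to confirm that each summand is free of rank one over $\NH_τ$ and that the resulting product of hom spaces is the same as the mapping space $\Map(S_{σ,τ}, T)$.

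First I would unfold the definition $\Ind_τ^σ(T) = \Hom_{\NH_τ}(\NH_σ, T)$, using right $\NH_τ$-linear maps. By \autoref{th:diag-algebra-moduledecomp} we have an isomorphism of right $\NH_τ$-modules
\begin{equation*}
\NH_σ \;\cong\; \bigoplus_{α \in S_{σ,τ}} α\,\NH_τ.
\end{equation*}
Since $α$ is a permutation (and in particular invertible on the diagrammatic level), the right $\NH_τ$-module $α\,\NH_τ$ is free of rank one, generated by $α$. Indeed, the map $\NH_τ \to α\,\NH_τ$ sending $x \mapsto αx$ is an isomorphism of right $\NH_τ$-modules; this can be checked graphically by noting that the $α$-strand pattern at the top of a diagram can be uniquely stripped off, leaving a $\NH_τ$-diagram below.

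Next I would use that $\Hom_{\NH_τ}(-, T)$ turns direct sums into products and evaluation at a free generator identifies $\Hom_{\NH_τ}(α\,\NH_τ, T)$ with $T$. Thus
\begin{equation*}
\Ind^σ_τ(T) \;\cong\; \prod_{α \in S_{σ,τ}} \Hom_{\NH_τ}(α\,\NH_τ, T) \;\cong\; \prod_{α \in S_{σ,τ}} T \;=\; \Map(S_{σ,τ}, T),
\end{equation*}
which is the desired identification as vector spaces. Since $S_{σ,τ}$ is a finite subset of the symmetric group, the product and coproduct agree, and the identification $\Map(S_{σ,τ}, T) \cong \bigoplus_{α} T$ is canonical.

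The only step that requires any genuine care is the rank-one freeness of $α\,\NH_τ$, and this is routine once one adopts the diagrammatic picture from \autoref{sec:nh-algebra}: pre-composing with $α$ is a bijection between diagrams that form a $\NH_τ$-basis and diagrams whose top row is a $(σ,τ)$-shuffle pattern $α$. I do not expect any serious obstacle; the lemma is essentially bookkeeping once \autoref{th:diag-algebra-moduledecomp} is in hand.
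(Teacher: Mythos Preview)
Your proof is correct and follows essentially the same approach as the paper: apply $\Hom_{\NH_τ}(-,T)$ to the decomposition of \autoref{th:diag-algebra-moduledecomp} and identify each summand's contribution with a copy of $T$. The paper's proof is terser and (somewhat imprecisely) calls the summands ``simple'' rather than free of rank one, but the argument is the same.
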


\begin{proof}
We have $ \Ind_τ^σ (T) = \Hom_{\NH_τ} (\NH_σ, T) $. By \autoref{th:diag-algebra-moduledecomp}, we have a decomposition of $ \NH_σ $ as right $ \NH_τ $-module into simple $ \NH_τ $-modules each indexed by an element $ s ∈ S_{σ, τ} $. This finishes the proof.
\end{proof}

This means that we can visualize the induction functor $ \Ind_τ^σ $ through shuffle diagrams, which are a subset of the permutation group $ S_{n+1} $. Conversely, we can also visualize the restriction functor $ \Res_τ^σ $ through diagrams. In this interpretation, the restriction functor acts as only one single diagram, namely the identity diagram.

\subsection{Adjunctability, recursiveness and far-commutativity}
\label{sec:nh-easy}
In this section, we show that the cubical diagram $ \NHS_{n+1} $ satisfies the Adjunctability, Recursiveness and Far-commutativity axioms. We proceed by checking all three after each other.

\begin{lemma}
The cubical diagram $ \NHS_{n+1} $ satisfies Adjunctability.
\end{lemma}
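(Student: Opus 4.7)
The plan is to exhibit, for every edge $\Res^\sigma_\tau : \Perf \NH_\sigma \to \Perf \NH_\tau$ in $\NHS_{n+1}$, the explicit right adjoint $\Ind^\sigma_\tau(T) = \Hom_{\NH_\tau}(\NH_\sigma, T)$ already introduced in \autoref{sec:nh-indres}, and to check that the classical tensor–hom adjunction descends to the full subcategories of perfect complexes. Since every edge of the cube corresponds to a composition refinement $\sigma \leq \tau$, it suffices to handle a single such pair; adjointness of the whole cube follows.

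First I would write down the adjunction at the level of ordinary (abelian) right modules. For $M \in \Mod \NH_\tau$ and $N \in \Mod \NH_\sigma$, the canonical isomorphism
\begin{equation*}
\Hom_{\NH_\sigma}\bigl(N, \Hom_{\NH_\tau}(\NH_\sigma, M)\bigr) \cong \Hom_{\NH_\tau}\bigl(\Res^\sigma_\tau N, M\bigr)
\end{equation*}
is the standard tensor–hom isomorphism, natural in both variables. This shows $\Ind^\sigma_\tau$ is right adjoint to $\Res^\sigma_\tau$ on the abelian level.

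Next I would upgrade this to the setting of perfect complexes. The key observation is that \autoref{th:diag-algebra-moduledecomp} gives a decomposition $\NH_\sigma = \bigoplus_{\alpha \in S_{\sigma, \tau}} \alpha \NH_\tau$ as right $\NH_\tau$-modules, so $\NH_\sigma$ is a finitely generated free, hence finitely generated projective, right $\NH_\tau$-module (and by symmetry also a finitely generated projective left $\NH_\tau$-module, since the shuffle decomposition can be written on either side). This has two consequences: the restriction functor $\Res^\sigma_\tau$ preserves perfect complexes because a perfect $\NH_\sigma$-module pulled back to $\NH_\tau$ remains a retract of a finite complex of free modules, and the induction functor $\Ind^\sigma_\tau = \Hom_{\NH_\tau}(\NH_\sigma, -)$ preserves perfect complexes since a finite complex of free $\NH_\tau$-modules is sent to a finite complex of finitely generated free $\NH_\sigma$-modules, as $\NH_\sigma$ is finite free over $\NH_\tau$. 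Consequently, both functors restrict to the perfect subcategories, and the adjunction above restricts as well.

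The only mild subtlety, which I would flag but not belabour, is the transition from the abelian adjunction to the stable $\infty$-categorical adjunction required by the definition of an $A_n$-schober. Because the NilHecke algebras are ordinary associative $\CC\pows{\hbar}$-algebras, $\Perf \NH_\sigma$ can be modelled as the dg-nerve of the usual dg-category of perfect complexes, and the adjunction on homotopy categories induced by the exact pair $(\Res^\sigma_\tau, \Ind^\sigma_\tau)$ automatically lifts to an adjunction of stable $\infty$-categories by standard results (see the remark at the beginning of \autoref{sec:prelim-axioms} on ignoring higher structure). This gives the required right adjoint for every edge of the cube, completing the verification of Adjunctability.
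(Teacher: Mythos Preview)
Your proof is correct and follows the same approach as the paper: both exhibit the induction functor $\Ind^\sigma_\tau = \Hom_{\NH_\tau}(\NH_\sigma, -)$ as the right adjoint to restriction. The paper's own proof is a one-liner that simply invokes the adjunction already set up in \autoref{sec:nh-indres}, whereas you have carefully filled in the details the paper leaves implicit (preservation of perfectness via \autoref{th:diag-algebra-moduledecomp}, and the lift to the stable $\infty$-setting).
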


\begin{proof}
The induction functor $ \Ind^σ_τ $ is the right adjoint to $ \Res^σ_τ $, which establishes the Adjunctability axiom.
\end{proof}

\begin{lemma}
The cubical diagram $ \NHS_{n+1} $ satisfies Recursiveness.
\end{lemma}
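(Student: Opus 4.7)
The plan is to reduce the axiom to the schober property of a lower-dimensional NilHecke schober via a base change in the outer tensor factors, and then proceed by induction on $n$.

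First, I would record the structural observation. For a proper composition $(n_1, \ldots, n_k) \in \Comp(n+1)$ with $k \geq 2$ and a chosen index $1 \leq i \leq k$, set
$$A := \NH_{n_1} \otimes_{\CC\pows{\hbar}} \cdots \otimes_{\CC\pows{\hbar}} \NH_{n_{i-1}}, \qquad C := \NH_{n_{i+1}} \otimes_{\CC\pows{\hbar}} \cdots \otimes_{\CC\pows{\hbar}} \NH_{n_k},$$
with the empty tensor product interpreted as $\CC\pows{\hbar}$. Then every $\tau \in \Comp(n_i)$ satisfies $\NH_{(n_1, \ldots, n_{i-1}, \tau, n_{i+1}, \ldots, n_k)} = A \otimes_{\CC\pows{\hbar}} \NH_\tau \otimes_{\CC\pows{\hbar}} C$, and every refinement $\sigma \leq \tau$ in $\Comp(n_i)$ corresponds to a subalgebra inclusion that is the identity on $A$ and $C$ and equal to $\NH_\tau \subset \NH_\sigma$ on the middle factor. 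Consequently, the restricted sub-cube $\Comp(n_i) \to \St$ is obtained from $\NHS_{n_i}$ by postcomposing with the outer base change operation $\Phi_{A, C}$ that sends $\Perf \NH_\tau$ to $\Perf(A \otimes_{\CC\pows{\hbar}} \NH_\tau \otimes_{\CC\pows{\hbar}} C)$ and translates each $\NHS_{n_i}$ restriction functor into the corresponding sub-cube restriction functor.

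Next, I would verify that $\Phi_{A, C}$ transports every ingredient of an $A_m$-schober. Preservation of right adjoints follows from tensoring the formula in \autoref{th:diag-indres-ind}: for any $T \in \Perf(A \otimes_{\CC\pows{\hbar}} \NH_\tau \otimes_{\CC\pows{\hbar}} C)$, the sub-cube's induction functor is still $\Map(S_{\sigma, \tau}, T)$, arising from the decomposition of \autoref{th:diag-algebra-moduledecomp} after tensoring with $A$ and $C$ on the two sides. Bifactorization cubes are pointwise constructions on the input cube and therefore commute with $\Phi_{A, C}$ tautologically. Beck-Chevalley cubes and the resulting twist fibers are transported by $\Phi_{A, C}$ because the construction of \cite[Construction 3.2]{Dyckerhoff-Wedrich} is natural in the input cube and because taking fibers commutes with exact functors.

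With the base change compatibility established, I would conclude by induction on $n$. The base case $n = 0$ is vacuous because $\Comp(1)$ contains no proper composition. For $n \geq 1$, the properness assumption $k \geq 2$ forces $n_i \leq n$, so $\NHS_{n_i}$ is an $A_{n_i - 1}$-schober by the inductive hypothesis; applying $\Phi_{A, C}$ transfers this structure to the sub-cube. The main obstacle will be the preservation step for Beck-Chevalley cubes: while the explicit chains of natural transformations illustrated in \autoref{fig:prelim-bc-edgemaptop} and \autoref{fig:prelim-bc-edgemapvertical} are manifestly preserved by $\Phi_{A, C}$, the higher coherence data packaged by Lurie's existence theorem have to be transported carefully, and this is where the $\infty$-categorical technical work would be concentrated.
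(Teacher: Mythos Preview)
Your proposal is correct and takes essentially the same approach as the paper: both identify the restricted sub-cube as $\NHS_{n_i}$ tensored on either side by the fixed outer factors, and then appeal to induction on $n$. The paper's proof is a single paragraph that simply asserts the reduction without justification, whereas you make the transport operation $\Phi_{A,C}$ explicit and flag the one nontrivial step (carrying the higher coherences of the Beck--Chevalley construction through the base change); in that sense your write-up is more honest about where the content lies, but the underlying argument is the same.
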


\begin{proof}
Regard a composition $ (n_1, …, n_k) ∈ \Comp(n+1) $. Then the restriction $ \NHS_{n+1} \restr_{\Comp(n_i)} $ along the inclusion map $ \Comp(n_i) → \Comp(n+1) $ given by $ τ ↦ (n_1, …, n_{i-1}, τ, n_{i+1}, … n_k) $ is given by
\begin{equation*}
(\NHS_{n+1} \restr_{\Comp(n_i)})_τ = \Perf(\NH_{n_1} ¤ … ¤ \NH_{n_{i-1}} ¤ \NH_τ ¤ \NH_{n_{i+1}} ¤ … ¤ \NH_{n_k}).
\end{equation*}
The functors in this restricted diagram are given by the forgetful functors between $ \Perf \NH_τ $ and $ \Perf \NH_σ $ for $ σ, τ ∈ \Comp(n_i) $, tensored by identities on the left and right. Checking that this diagram is an $ A_{n_i-1} $-schober comes down to the fact that $ \NHS_{n_i} $ is an $ A_{n_i-1} $-schober, which we shall assume by means of induction.
\end{proof}

\begin{lemma}
The cubical diagram $ \NHS_{n+1} $ satisfies Far-commutativity.
\end{lemma}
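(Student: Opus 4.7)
The plan is to exploit the tensor decomposition of NilHecke algebras along the split induced by $ (a, b) $. For any refinement $ (c, d) $ of $ (a, b) $ with $ c ∈ \Comp(a) $ and $ d ∈ \Comp(b) $, we have $ \NH_{(c, d)} = \NH_c \otimes_{ℂ⟦ħ⟧} \NH_d $. Under this identification, the inclusion $ \NH_{(c_1, d_0)} ⊂ \NH_{(c_0, d_0)} $ modifies only the first tensor factor, while $ \NH_{(c_0, d_1)} ⊂ \NH_{(c_0, d_0)} $ modifies only the second, and analogously for the two other algebra inclusions in the square. The two directions of the square therefore act on disjoint tensor factors.

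Concretely, I would first use \autoref{th:diag-indres-ind} to make the shuffle sets decompose. The left block stays unrefined in the two vertical edges of the square, giving canonical identifications $ S_{(c_0, d_0), (c_0, d_1)} ≅ S_{d_0, d_1} ≅ S_{(c_1, d_0), (c_1, d_1)} $, and symmetric statements hold for the horizontal edges. Consequently the induction $ \Ind^{(c_0, d_0)}_{(c_0, d_1)} $ sends $ T $ to $ \Map(S_{d_0, d_1}, T) $, with $ \NH_{c_0} $ acting pointwise on components, while the two restrictions act trivially on underlying vector spaces. Plugging into the top- and bottom-layer formulas of \autoref{def:prelim-bc-def} in dimension $ d = 2 $, both compositions around the square yield the same functor $ \chi_{(c_0, d_1)} → \chi_{(c_1, d_0)} $, namely $ T ↦ \Map(S_{d_0, d_1}, T) $ with the natural $ \NH_{(c_1, d_0)} $-action.

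It remains to verify that the Beck--Chevalley natural transformation between these two equal functors is the identity. Using the pasting diagram of \autoref{fig:prelim-bc-edgemapvertical} specialized to $ d = 2 $, this transformation is assembled from unit and counit 2-cells of the adjunctions $ \Res \dashv \Ind $. Because each 2-cell concerns either the $ c $-factor or the $ d $-factor in isolation and leaves the other factor intact, the pasting reduces to trivial unit-counit triangle identities on $ \Map(S_{d_0, d_1}, T) $ and is hence the identity natural transformation. The main obstacle is the precise bookkeeping of units and counits in the full $ ∞ $-categorical setup, but the abelian viewpoint adopted in \autoref{sec:nh-indres} reduces the check to a direct inspection on underlying vector spaces, which is routine.
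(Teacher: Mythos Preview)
Your proof is correct and follows the same idea as the paper: both rely on the tensor decomposition $ \NH_{(c,d)} = \NH_c ¤ \NH_d $ to observe that restriction along the $ c $-factor and induction along the $ d $-factor commute. The paper's own argument is extremely terse — it simply notes that the two composite functors $ \Res_{c} ∘ \Ind_d $ and $ \Ind_d ∘ \Res_c $ agree and stops there. You go one step further and actually check that the specific Beck--Chevalley transformation between these two (equal) functors is the identity, tracing through the unit/counit pasting of \autoref{fig:prelim-bc-edgemapvertical}. That extra care is warranted, since equality of source and target functors does not by itself force a given natural transformation between them to be invertible; the paper implicitly relies on the reader filling in this routine verification, which you have made explicit.
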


\begin{proof}
Regard a composition $ (a, b) ∈ \Comp(n+1) $ and compositions $ c_0 ≤ c_1 $ of $ a $ and $ d_0 ≤ d_1 $ of $ d $. Regard the square
\begin{equation*}
\begin{tikzcd}
\Perf \NH_{c_0, d_0} \arrow[r] \arrow[d] & \Perf \NH_{c_0, d_1} \arrow[d] \\
\Perf \NH_{c_1, d_0} \arrow[r] & \Perf \NH_{c_1, d_1}.
\end{tikzcd}
\end{equation*}
It is our task to show that the BC map of this square is an equivalence. The source and target of the BC map are two functors $ \Perf \NH_{c_0, d_1} → \Perf \NH_{c_1, d_0} $. The first functor is given by first inducing along $ d_0 ≤ d_1 $ and then forgetting along $ c_0 ≤ c_1 $. The second functor is given by first forgetting along $ c_0 ≤ c_1 $ and then inducing along $ d_0 ≤ d_1 $. Since both agree, we finish the proof.
\end{proof}

\subsection{Defect vanishing for $ Q_{\NHS_{2c+m}} ((c, c+m), (c, c+m)) $}
\label{sec:nh-ccmccm}
In this section, we explicitly demonstrate the defect vanishing axiom for one specific type of pair of compositions. We treat the pairs of compositions of the form $ (c, c+m), (c, c+m) $ with $ c, m > 0 $. The total number of strands involved in this pair of compositions is $ n+1 = 2c + m $. The integers $ c, m > 0 $ remain fixed throughout this section. The example $ c = 2 $ and $ m = 1 $ is worked out in \autoref{sec:example-Q_23_23}.

We start by describing the bifactorization cube and Beck-Chevalley cube explicitly. The indices of the Beck-Chevalley cube are binary indices $ \{0, 1\} $. At this moment, we also provide the description of the vertices in binary terms. Later on, we shall describe the vertices more visually in terms of compositions.

\begin{lemma}
\label{th:nh-ccmccm-bifBC}
Let $ c, m > 0 $. Then $ Q_{\NHS_{2c+m}} ((c, c+m), (c, c+m)) $ is $ c+m+1 $-dimensional and we have 
\begin{equation*}
Q_{\NHS_{2c+m}} ((c, c+m), (c, c+m))_{δ_1, δ_2, ε_1, …, ε_{c-1}, ζ, η_1, …, η_{m-1}} = \Perf \NH_{ψ^{-1} (ε_1 … ε_{c-1} (δ_1 ∨ δ_2) ε_{c-1} … ε_1 ζ 0^{m-1})}.
\end{equation*}
The Beck-Chevalley cube $ \BC(Q_{\NHS_{2c+m}} ((c, c+m), (c, c+m)) $ is $ c+m $-dimensional. Its top and bottom layers are given by
\begin{align*}
\BC(Q_{\NHS_{2c+m}} ((c, c+m), (c, c+m)))_{β_1, …, β_{c+m-1}, 0} &=
\begin{array}[c]{l}
\Perf \NH_{ψ^{-1} (0^{c-1} 1 0^{c-1} 0 0^{m-1})} \\ \Perf \NH_{ψ^{-1} (β_1 … β_{c-1} 1 β_{c-1} … β_1 β_c 0^{m-1})} \\ \Perf \NH_{ψ^{-1} (β_1 … β_{c-1} 0 β_{c-1} … β_1 β_c 0^{m-1})} \\ \Perf \NH_{ψ^{-1} (β_1 … β_{c-1} 1 β_{c-1} … β_1 β_c 0^{m-1})} \\ \Perf \NH_{ψ^{-1} (0^{c-1} 1 0^{c-1} 0 0^{m-1})}
\end{array}, \\
\BC(Q_{\NHS_{2c+m}} ((c, c+m), (c, c+m)))_{β_1, …, β_{c+m-1}, 1} &= 
\begin{array}[c]{l}
\Perf \NH_{ψ^{-1} (0^{c-1} 1 0^{c-1} 0 0^{m-1})} \\ \Perf \NH_{ψ^{-1} (β_1 … β_{c-1} 1 β_{c-1} … β_1 β_c 0^{m-1})} \\ \Perf \NH_{ψ^{-1} (β_1 … β_{c-1} 1 β_{c-1} … β_1 β_c 0^{m-1})} \\ \Perf \NH_{ψ^{-1} (β_1 … β_{c-1} 1 β_{c-1} … β_1 β_c 0^{m-1})} \\ \Perf \NH_{ψ^{-1} (0^{c-1} 1 0^{c-1} 0 0^{m-1})}.
\end{array}
\end{align*}
\end{lemma}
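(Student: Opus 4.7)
The plan is to derive both parts of the lemma directly by unpacking the bifactorization cube definition from \autoref{sec:prelim-bifcube} and the Beck-Chevalley construction from \autoref{def:prelim-bc-def}, applied to $\chi = \NHS_{2c+m}$.

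For the first assertion, I would specialize Case 1 of the bifactorization cube definition to $(a, b) = (c, d) = (c, c+m)$. Counting the displayed parameters $\delta_1, \delta_2, \epsilon_1, \ldots, \epsilon_{c-1}, \zeta, \eta_1, \ldots, \eta_{m-1}$ gives $2 + (c-1) + 1 + (m-1) = c+m+1$, which confirms the claimed dimension. Since $\NHS_{2c+m}$ sends a composition $\tau$ to $\Perf \NH_\tau$, applying $\psi^{-1}$ to the binary string of length $2c+m-1$ produced by the bifactorization formula yields exactly the expression $\Perf \NH_{\psi^{-1}(\epsilon_1 \ldots \epsilon_{c-1} (\delta_1 \vee \delta_2) \epsilon_{c-1} \ldots \epsilon_1 \zeta 0^{m-1})}$ in the statement.

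For the second assertion, I would apply \autoref{def:prelim-bc-def} to the $(c+m+1)$-dimensional bifactorization cube just produced. The Beck-Chevalley construction yields a $(c+m)$-dimensional cube whose indexing splits as $(\beta_1, \ldots, \beta_{c+m-1}, 0 \text{ or } 1)$, with the last bit selecting top or bottom layer and the preceding $c+m-1$ bits indexing the remaining coordinates of $Q$. After renaming $\epsilon_i = \beta_i$ for $i = 1, \ldots, c-1$, $\zeta = \beta_c$, and $\eta_j = \beta_{c+j}$ for $j = 1, \ldots, m-1$, I would compute each of the five stacked vertices of $\BC(Q)_{\beta, 0}$ and $\BC(Q)_{\beta, 1}$ by substitution into the Case 1 bifactorization formula. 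The boundary vertices $Q_{0,1,0^{d-2}}$ and $Q_{1,0,0^{d-2}}$ collapse to $\Perf \NH_{\psi^{-1}(0^{c-1} 1 0^{c-1} 0 0^{m-1})}$ since $\delta_1 \vee \delta_2 = 1$ and all other coordinates vanish; the central vertex differs between the layers, being $Q_{0,0,\beta}$ (middle letter $0$) at the top and $Q_{1,1,\beta}$ (middle letter $1$) at the bottom; and the second and fourth stacked vertices both carry middle letter $1$ since $\delta_1 \vee \delta_2 = 1$ in each case.

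There is no real obstacle here, as the proof reduces to a substitution into the definitions. The one subtlety worth flagging is the bookkeeping between the Case 1 bifactorization coordinates $(\delta, \epsilon, \zeta, \eta)$ and the Beck-Chevalley coordinates $(\delta, \beta)$; in particular, the $\eta$ coordinates do not appear as letters in the binary string produced by Case 1 and are instead replaced by the fixed string $0^{m-1}$, which accounts for the trailing $0^{m-1}$ appearing uniformly in every vertex of both BC layers.
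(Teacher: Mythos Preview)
Your proposal is correct and matches the paper's approach: the lemma is stated without proof in the paper precisely because it is a direct unwinding of the bifactorization cube definition (Case~1 in \autoref{sec:prelim-bifcube}) and the Beck--Chevalley recipe of \autoref{def:prelim-bc-def}, exactly as you describe. Your remark that the $\eta$-coordinates are dummy variables not appearing in the output binary string is a useful observation that the paper leaves implicit.
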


\begin{remark}
We have for instance $ ψ^{-1} (0^{c-1} 1 0^{c-1} 0 0^{m-1}) = (c, c+m) $. The difference between the top and bottom layer lies in the third row where the top layer has an inner 0 digit while the bottom layer has an inner 1 digit. The composition $ ψ^{-1} (β_1 … β_{c-1} 0 β_{c-1} … β_1 β_c 0^{m-1}) $ has a block of even width in the middle while in the composition $ ψ^{-1} (β_1 … β_{c-1} 1 β_{c-1} … β_1 β_c 0^{m-1}) $ this block is split into two equal pieces. Note that the left $ 2c $ wide part of all compositions is palindromic.
\end{remark}

Let us now sketch how we compute the total fiber of the Beck-Chevalley cube $ \BC(Q_{\NHS_{2c+m}} ((c, c+m), (c, c+m))) $. We start by observing that the Beck-Chevalley cube is $ c+m $-dimensional. The idea is to compute the total fiber iteratively by taking fibers along different coordinate axes of the cube. We start by setting $ B^{c+m} = \BC(Q_{\NHS_{2c+m}} ((c, c+m), (c, c+m))) $. We then take fibers along the axis from top to bottom layer, and after that we take fibers iteratively with respect to the coordinate axes $ β_{c-1}, …, β_1 $. We use the following notation.

\begin{definition}
The \emph{intermediate Beck-Chevalley cubes} $ B^{c+m-1}, …, B^{c+m-c} $ are given as follows. The cube $ B^{c+m-1} $ is defined by
\begin{equation*}
B^{c+m-1}_{β_1, …, β_{c+m-1}} = \fib(B^{c+m}_{β_1, …, β_{c+m-1}, 0} → B^{c+m}_{β_1, …, β_{c+m-1}, 1}).
\end{equation*}
The cube $ B^{c+m-i} $ is iteratively defined by
\begin{equation*}
B^{c+m-i}_{β_1, …, β_{c-i}, β_c, …, β_{c+m-1}} = \fib(B^{c+m-i+1}_{β_1, …, β_{c-i}, 0, β_c, …, β_{c+m-1}} → B^{c+m-i+1}_{β_1, …, β_{c-i}, 1, β_c, …, β_{c+m-1}}).
\end{equation*}
\end{definition}

We continue this process until we reach a description of $ B^m $. In order to describe the intermediate Beck-Chevalley cubes explicitly, we shall introduce notation to capture compositions which arise during the process.

\begin{definition}
Let $ 0 ≤ i ≤ c-1 $. Let $ β_1, …, β_{c-1-i}, β_c, …, β_{c+m-1} ∈ \{0, 1\} $. Then the \emph{intermediate compositions at level $ i $} are defined as follows:
\begin{align*}
τ &= (τ_1, …, τ_k + i) ≔ ψ^{-1} (β_1, …, β_{c-1-i}, 0^i) ∈ \Comp(c), \\
τ' &= (τ_1, …, τ_k, i) ≔ ψ^{-1} (β_1, …, β_{c-1-i}, 1, 0^{i-1}) ∈ \Comp(c), \\
\tilde{τ} &= (τ_1, …, τ_k + i, i + τ_k, …, τ_1 | m) ≔ \\
& \qquad ψ^{-1} (β_1, …, β_{c-1-i}, 0^i, 0, 0^i, β_{c-1-i}, …, β_1, β_c, 0^{m-1}) ∈ \Comp(2c+m), \\
\tilde{τ'} &= (τ_1, …, τ_k, i, i, τ_k, …, τ_1 | m) ≔ \\
& \qquad ψ^{-1} (β_1, …, β_{c-1-i}, 0^i, 1, 0^i, β_{c-1-i}, …, β_1, β_c, 0^{m-1}) ∈ \Comp(2c+m), \\
\tilde{σ} &= (τ_1, …, τ_k + i + i + τ_k, …, τ_1 | m) ≔ \\
& \qquad ψ^{-1} (β_1, …, β_{c-1-i}, 0^i, 0, 0^i, β_{c-1-i}, …, β_1, β_c, 0^{m-1}) ∈ \Comp(2c+m), \\
\tilde{σ'} &= (τ_1, …, τ_k, i + i, τ_k, …, τ_1 | m) ≔ \\
& \qquad ψ^{-1} (β_1, …, β_{c-1-i}, 0^i, 1, 0^i, β_{c-1-i}, …, β_1, β_c, 0^{m-1}) ∈ \Comp(2c+m).
\end{align*}
In the above formulas, the notation $ τ_1 | m $ denotes the option of $ (τ_1 + m) $ or $ (τ_1, m) $, depending on whether $ β_c = 0 $ or $ β_c = 1 $. In the extremal case $ i = 0 $, the definition of $ τ' $ is simply meant to be $ τ' = (τ_1, …, τ_k) = ψ^{-1} (β_1, …, β_{c-1-i}) $.
\end{definition}

We shall now reformulate the description of the vertices of the Beck-Chevalley cube from \autoref{th:nh-ccmccm-bifBC} directly in terms of compositions. Even though the statement of \autoref{th:nh-ccmccm-BCcubecompositions} makes use of the terminology of intermediate compositions, these in fact refer to level $ 0 $.

\begin{corollary}
\label{th:nh-ccmccm-BCcubecompositions}
Let $ β_1, …, β_{c+m-1} ∈ \{0, 1\} $. Let $ τ, τ', \tilde{τ}, \tilde{τ'}, \tilde{σ}, \tilde{σ'} $ denote the intermediate compositions at level $ 0 $. Then the Beck-Chevalley cube $ B^{c+m} = \BC(Q_{\NHS_{2c+m}} ((c, c+m), (c, c+m))) $ is given as follows:
\begin{align*}
% top
B^{c+m}_{β_1, …, β_{c+m-1}, 0} &=
\begin{array}[c]{l}
\Perf \NH_{c, c+m} \\ \Perf \NH_{\tilde{τ}} \\ \Perf \NH_{\tilde{σ}} \\ \Perf \NH_{\tilde{τ}} \\ \Perf \NH_{c, c+m}
\end{array} \\
% bottom
B^{c+m}_{β_1, …, β_{c+m-1}, 1} &=
\begin{array}[c]{l}
\Perf \NH_{c, c+m} \\ \Perf \NH_{\tilde{τ'}} \\ \Perf \NH_{\tilde{σ'}} \\ \Perf \NH_{\tilde{τ'}} \\ \Perf \NH_{c, c+m}.
\end{array}
\end{align*}
\end{corollary}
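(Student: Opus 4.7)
The plan is to unpack both sides of the claim and verify that they agree by a direct binary-to-composition translation through the map $ ψ^{-1} $. The corollary is a rephrasing of \autoref{th:nh-ccmccm-bifBC} under the intermediate-composition notation specialised to level $ i = 0 $, so no new categorical content is needed; all that is required is a careful bookkeeping of binary strings, blocks, and the palindromic structure already noted in the remark following \autoref{th:nh-ccmccm-bifBC}.

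First I would substitute $ i = 0 $ into the defining formulas for $ τ, τ', \tilde{τ}, \tilde{τ'}, \tilde{σ}, \tilde{σ'} $. This collapses the common prefix $ (β_1, \ldots, β_{c-1-i}, 0^i) $ to just $ (β_1, \ldots, β_{c-1}) $, giving $ τ = τ' = (τ_1, \ldots, τ_k) = ψ^{-1}(β_1, \ldots, β_{c-1}) ∈ \Comp(c) $. The tilde-compositions then become explicit concatenations of $ τ $ with its reverse, optionally with the two central blocks merged and with the final block decided by $ β_c $ via the $ | m $ notation.

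Next I would verify row by row that the five entries in \autoref{th:nh-ccmccm-bifBC} match the corollary. Rows 1 and 5 are immediate: $ ψ^{-1}(0^{c-1} 1 0^{c-1} 0 0^{m-1}) = (c, c+m) $. For the remaining three rows of the top layer I would use the identity $ ψ(τ_1, \ldots, τ_k) = 0^{τ_1 - 1} 1 \cdots 1 0^{τ_k - 1} $ to observe two things: first, the palindromic binary string $ β_1 \ldots β_{c-1} \, 1 \, β_{c-1} \ldots β_1 $ decodes to the composition $ (τ_1, \ldots, τ_k, τ_k, \ldots, τ_1) $ in which the two innermost $ τ_k $-blocks remain separated by the central 1; second, the palindromic string $ β_1 \ldots β_{c-1} \, 0 \, β_{c-1} \ldots β_1 $ decodes to $ (τ_1, \ldots, τ_{k-1}, 2τ_k, τ_{k-1}, \ldots, τ_1) $, with the two central blocks fused into a single $ 2τ_k $-block. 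Appending the digits $ β_c, 0^{m-1} $ then produces the final block $ (τ_1 + m) $ or $ (τ_1, m) $ depending on whether $ β_c = 0 $ or $ β_c = 1 $, which is precisely what the bar notation $ | m $ encodes. This identifies rows 2 and 4 with $ \Perf \NH_{\tilde{τ}} $ and row 3 with $ \Perf \NH_{\tilde{σ}} $. The bottom-layer rows are handled identically with the inner 0-digit replaced by a 1-digit, yielding $ \tilde{τ'} $ and $ \tilde{σ'} $.

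The only real obstacle is notational rather than mathematical: I must confirm that the ambiguous $ τ_1 | m $ convention consistently reproduces both $ β_c = 0 $ and $ β_c = 1 $ branches, and that the symmetric positioning of blocks in the definitions of $ \tilde{τ}, \tilde{σ} $ is synchronized with the palindromic positioning of $ β $-digits in the binary expressions of \autoref{th:nh-ccmccm-bifBC}. Once these two matchings are checked, the corollary follows directly by applying $ ψ^{-1} $ entrywise.
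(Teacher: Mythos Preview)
Your proposal is correct and matches the paper's approach: the paper states the corollary without proof because it is an immediate translation of \autoref{th:nh-ccmccm-bifBC} via the level-$0$ intermediate-composition notation, and your plan is precisely to spell out that translation row by row. The only point worth flagging is that at level $i=0$ the definitions of $\tilde{\tau'}$ and $\tilde{\sigma'}$ degenerate (the formal entries ``$i,i$'' vanish) so that in fact $\tilde{\tau'}=\tilde{\sigma'}=\tilde{\tau}$, which is consistent with all three middle rows of the bottom layer in \autoref{th:nh-ccmccm-bifBC} sharing the same central digit~$1$; your bottom-layer step handles this correctly even though you do not call out the degeneracy explicitly.
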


Together with \autoref{th:diag-indres-ind}, we can reformulate the vertices of the Beck-Chevalley cube explicitly. Once an element $ T ∈ \Perf \NH_{c, c+m} $ is given, \autoref{th:nh-ccmccm-BCdiags} describes the vertices explicitly described in terms of mapping spaces from certain sets of shuffle diagrams to $ T $. At this point, it becomes clear that the remainder of the axiom checks is purely combinatorical and the module nature of $ T $ is only used sporadically. We shall therefore drop reference to the module nature as much as possible and focus on explicit descriptions of the sets of shuffle diagrams appearing in the process.

\begin{corollary}
\label{th:nh-ccmccm-BCdiags}
Let $ T ∈ \Perf \NH_{c, c+m} $. Let $ β_1, …, β_{c+m-1} $ and denote by $ \tilde{τ}, \tilde{τ'} $ the associated compositions at level $ 0 $. As vector spaces, we have the following isomorphisms:
\begin{align*}
B^{c+m}_{β_1, …, β_{c+m-1}, 0} &≅ \Map(S_{(c, c+m), \tilde{τ}}, \Map(S_{\tilde{σ}, \tilde{τ}}, T)), \\
B^{c+m}_{β_1, …, β_{c+m-1}, 1} &≅ \Map(S_{(c, c+m), \tilde{τ'}}, \Map(S_{\tilde{σ'}, \tilde{τ'}}, T)).
\end{align*}
\end{corollary}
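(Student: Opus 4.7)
The plan is to unpack each vertex as an explicit four-fold composition of restriction and induction functors using \autoref{conv:prelim-bc-functorconv}, then apply \autoref{th:diag-indres-ind} to each induction step to convert $\Hom$-spaces into $\Map$-spaces over shuffle sets.

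For the top-layer vertex $B^{c+m}_{\beta_1, \ldots, \beta_{c+m-1}, 0}$, combining \autoref{th:nh-ccmccm-BCcubecompositions} with \autoref{def:prelim-bc-def} identifies the vertex with the four-fold composition
\[
\Perf \NH_{(c,c+m)} \xrightarrow{\Res^{(c,c+m)}_{\tilde\tau}} \Perf \NH_{\tilde\tau} \xrightarrow{\Ind^{\tilde\sigma}_{\tilde\tau}} \Perf \NH_{\tilde\sigma} \xrightarrow{\Res^{\tilde\sigma}_{\tilde\tau}} \Perf \NH_{\tilde\tau} \xrightarrow{\Ind^{(c,c+m)}_{\tilde\tau}} \Perf \NH_{(c,c+m)}.
\]
The second and fourth arrows are inductions rather than restrictions because in those steps the underlying composition becomes less refined---opposite to the default direction of edges in $\NHS_{2c+m}$---while the first and third arrows are genuine refinements and hence restrictions. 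This direction assignment is visible from the binary presentations listed in \autoref{th:nh-ccmccm-bifBC}: only the middle digit flips between the second/fourth rows and the third row, and the boundary rows use the extremal string $0^{c-1} 1 0^{c+m-1}$ corresponding to $(c, c+m)$.

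Plugging $T \in \Perf \NH_{(c,c+m)}$ into this chain, and recalling that $\Res$ is the identity on underlying vector spaces and that $\Ind^\sigma_\tau = \Hom_{\NH_\tau}(\NH_\sigma, -)$, the underlying vector space of the vertex is
\[
\Hom_{\NH_{\tilde\tau}}\bigl(\NH_{(c,c+m)},\, \Hom_{\NH_{\tilde\tau}}(\NH_{\tilde\sigma}, T)\bigr).
\]
Now I would apply \autoref{th:diag-indres-ind} to both $\Hom$-spaces separately: the inner one along the refinement $\tilde\sigma \leq \tilde\tau$ and the outer one along $(c,c+m) \leq \tilde\tau$. Each $\Hom$-space converts into a $\Map$-space indexed by the respective set of shuffle permutations, producing the asserted isomorphism $\Map(S_{(c,c+m),\tilde\tau}, \Map(S_{\tilde\sigma, \tilde\tau}, T))$. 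The bottom-layer formula follows by exactly the same computation after substituting $\tilde\tau', \tilde\sigma'$ for $\tilde\tau, \tilde\sigma$.

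The principal obstacle is purely notational bookkeeping: correctly identifying the $\Res$/$\Ind$ pattern along the four arrows of the BC stack via the monotonicity convention $\sigma \leq \tau$, and verifying that $\tilde\tau$ refines both $(c,c+m)$ and $\tilde\sigma$ so that \autoref{th:diag-indres-ind} applies. Once this combinatorics is set up, no further structure of $T$ is used—hence the proof is a two-step reduction, in line with the remark after \autoref{th:nh-ccmccm-BCcubecompositions} that the remainder of the analysis will be purely combinatorial.
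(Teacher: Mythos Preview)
Your proposal is correct and matches the paper's intended argument exactly: the corollary is stated without proof, preceded only by the sentence ``Together with \autoref{th:diag-indres-ind}, we can reformulate the vertices of the Beck-Chevalley cube explicitly,'' which is precisely the two-step reduction you carry out. Your identification of the $\Res/\Ind$ pattern along the four-fold stack and the application of \autoref{th:diag-indres-ind} to each induction step is the whole content of the corollary.
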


\begin{lemma}
\label{th:nh-ccmccm-injective}
Let $ β_1, …, β_{c+m-1} ∈ \{0, 1\} $. Then the following composition maps are injective:
\begin{align*}
∘: S_{(c, c+m), \tilde{τ}} × S_{\tilde{σ}, \tilde{τ}} &→ S_{2c+m}, \\
∘: S_{(c, c+m), \tilde{τ'}} × S_{\tilde{σ'}, \tilde{τ'}} &→ S_{2c+m}.
\end{align*}
\end{lemma}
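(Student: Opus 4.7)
The plan is to recover $s_1$ and $s_2$ uniquely from their product $\pi = s_1 \circ s_2$ by exploiting a rigid block structure. The structural observation to emphasize first is that $\tilde{\tau}$ is obtained from $\tilde{\sigma}$ by inserting exactly one extra breakpoint, located at position $c$, and that this breakpoint coincides with the unique breakpoint of $(c, c+m)$. Equivalently, $\tilde{\tau}$ is the join of the partitions induced by $\tilde{\sigma}$ and $(c, c+m)$. The analogous statement holds for $\tilde{\tau}'$ versus $\tilde{\sigma}'$, so the argument we give applies verbatim to both composition maps in the statement.

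We set up notation as follows: write $A = \{1, \ldots, c\}$ and $B = \{c+1, \ldots, 2c+m\}$ for the two $(c, c+m)$-blocks, and let $C$ denote the unique $\tilde{\sigma}$-block containing both $c$ and $c+1$. Decompose $C = C_A \sqcup C_B$ with $C_A = C \cap A$ and $C_B = C \cap B$; these are exactly the two $\tilde{\tau}$-sub-blocks into which the extra breakpoint splits $C$. Every other $\tilde{\sigma}$-block coincides with a single $\tilde{\tau}$-block. A first consequence is that any $s_2 \in S_{\tilde{\sigma}, \tilde{\tau}}$ restricts to the identity outside $C$, since the only strictly increasing self-bijection of a set that is simultaneously a single $\tilde{\sigma}$-block and a single $\tilde{\tau}$-block is the identity.

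The main step is to recover $s_1$ from $\pi$. For $i \notin C$ the equation $\pi(i) = s_1(s_2(i)) = s_1(i)$ pins down $s_1$ on the complement of $C$. Because $s_1$ preserves the sets $A$ and $B$ setwise, the images $s_1(C_A) = A \setminus \pi(A \setminus C_A)$ and $s_1(C_B) = B \setminus \pi(B \setminus C_B)$ are then determined by $\pi$ alone. Since $s_1$ is strictly increasing on each of the $\tilde{\tau}$-blocks $C_A$ and $C_B$, its restrictions $s_1|_{C_A}$ and $s_1|_{C_B}$ are uniquely determined as the order-preserving bijections onto these prescribed images. Hence $s_1$ is fully pinned down by $\pi$, after which $s_2 = s_1^{-1} \pi$ is determined as well, establishing injectivity.

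The main obstacle is not combinatorial but bookkeeping: one must verify carefully, by tracing the definitions through the $\psi$-encoding, that both pairs $(\tilde{\sigma}, \tilde{\tau})$ and $(\tilde{\sigma}', \tilde{\tau}')$ really do satisfy the "one extra breakpoint at position $c$" property, and in particular that the extra break is exactly the breakpoint inherited from $(c, c+m)$. In the degenerate sub-case where $\tilde{\sigma}' = \tilde{\tau}'$, the set $S_{\tilde{\sigma}', \tilde{\tau}'}$ contains only the identity and the second composition map reduces to the inclusion $(s_1, e) \mapsto s_1$, for which injectivity is immediate.
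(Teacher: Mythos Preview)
Your proof is correct and follows essentially the same approach as the paper. Both arguments hinge on the single structural fact that every $s_1 \in S_{(c,c+m),\tilde\tau}$ preserves the two blocks $A=\{1,\dots,c\}$ and $B=\{c+1,\dots,2c+m\}$ setwise; the paper states this in one line and asserts that $s_2$ can be read off from the product, while you carry out the reconstruction in the opposite order (pin down $s_1$ outside $C$, fill in $s_1|_{C_A}$ and $s_1|_{C_B}$ by monotonicity, then recover $s_2$), which is just the dual way of unwinding the same observation.
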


\begin{proof}
Since a shuffle in the first-mentioned set keeps the first $ c $ elements set-wise separated from the second $ c+m $ elements, the shuffle in the second-mentioned set can be reconstructed from the product. The compositions involved in this lemma are illustrated in \autoref{fig:nh-ccmccm-composition}.
\end{proof}

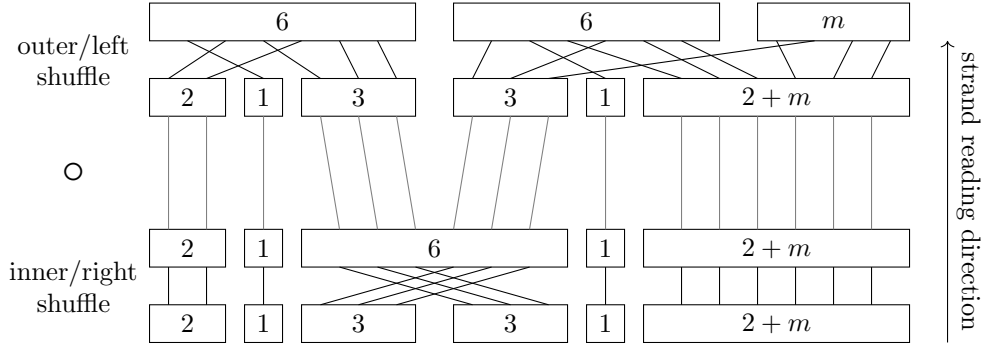
\begin{figure}
\centering
\begin{tikzpicture}
\shufflebox{(0, 0)}{6}{A1}{0.25};
\shufflebox{(4, 0)}{6}{A2}{0.25};
\shufflebox{(8, 0)}{4}{A3}{0};
\path (A1-C) node {6};
\path (A2-C) node {6};
\path (A3-C) node {$ m $};
\shufflebox{(0, -1)}{2}{B1}{0};
\shufflebox{(1.25, -1)}{1}{B2}{0};
\shufflebox{(2, -1)}{3}{B3}{0};
\shufflebox{(4, -1)}{3}{B4}{0};
\shufflebox{(5.75, -1)}{1}{B5}{0};
\shufflebox{(6.5, -1)}{6}{B6}{0.25};
\path (B1-C) node {2};
\path (B2-C) node {1};
\path (B3-C) node {3};
\path (B4-C) node {3};
\path (B5-C) node {1};
\path (B6-C) node {$ 2+m $};
\path[draw] (B1-T1) -- (A1-B2);
\path[draw] (B1-T2) -- (A1-B4);
\path[draw] (B2-T1) -- (A1-B1);
\path[draw] (B3-T1) -- (A1-B3);
\path[draw] (B3-T2) -- (A1-B5);
\path[draw] (B3-T3) -- (A1-B6);
\path[draw] (B4-T1) -- (A2-B1);
\path[draw] (B4-T2) -- (A2-B4);
\path[draw] (B4-T3) -- (A3-B2);
\path[draw] (B5-T1) -- (A2-B2);
\path[draw] (B6-T1) -- (A2-B3);
\path[draw] (B6-T2) -- (A2-B5);
\path[draw] (B6-T3) -- (A2-B6);
\path[draw] (B6-T4) -- (A3-B1);
\path[draw] (B6-T5) -- (A3-B3);
\path[draw] (B6-T6) -- (A3-B4);
\begin{scope}[shift={(0, -3)}]
\shufflebox{(0, 0)}{2}{C1}{0};
\shufflebox{(1.25, 0)}{1}{C2}{0};
\shufflebox{(2, 0)}{6}{C3}{0.25};
\shufflebox{(5.75, 0)}{1}{C5}{0};
\shufflebox{(6.5, 0)}{6}{C6}{0.25};
\path (C1-C) node {2};
\path (C2-C) node {1};
\path (C3-C) node {6};
\path (C5-C) node {1};
\path (C6-C) node {$ 2+m $};
\shufflebox{(0, -1)}{2}{D1}{0};
\shufflebox{(1.25, -1)}{1}{D2}{0};
\shufflebox{(2, -1)}{3}{D3}{0};
\shufflebox{(4, -1)}{3}{D4}{0};
\shufflebox{(5.75, -1)}{1}{D5}{0};
\shufflebox{(6.5, -1)}{6}{D6}{0.25};
\path (D1-C) node {2};
\path (D2-C) node {1};
\path (D3-C) node {3};
\path (D4-C) node {3};
\path (D5-C) node {1};
\path (D6-C) node {$ 2+m $};
\path[draw] (D1-T1) -- (C1-B1);
\path[draw] (D1-T2) -- (C1-B2);
\path[draw] (D2-T1) -- (C2-B1);
\path[draw] (D3-T1) -- (C3-B4);
\path[draw] (D3-T2) -- (C3-B5);
\path[draw] (D3-T3) -- (C3-B6);
\path[draw] (D4-T1) -- (C3-B1);
\path[draw] (D4-T2) -- (C3-B2);
\path[draw] (D4-T3) -- (C3-B3);
\path[draw] (D5-T1) -- (C5-B1);
\path[draw] (D6-T1) -- (C6-B1);
\path[draw] (D6-T2) -- (C6-B2);
\path[draw] (D6-T3) -- (C6-B3);
\path[draw] (D6-T4) -- (C6-B4);
\path[draw] (D6-T5) -- (C6-B5);
\path[draw] (D6-T6) -- (C6-B6);
\end{scope}
\path[draw, gray] (C1-T1) -- (B1-B1);
\path[draw, gray] (C1-T2) -- (B1-B2);
\path[draw, gray] (C2-T1) -- (B2-B1);
\path[draw, gray] (C3-T1) -- (B3-B1);
\path[draw, gray] (C3-T2) -- (B3-B2);
\path[draw, gray] (C3-T3) -- (B3-B3);
\path[draw, gray] (C3-T4) -- (B4-B1);
\path[draw, gray] (C3-T5) -- (B4-B2);
\path[draw, gray] (C3-T6) -- (B4-B3);
\path[draw, gray] (C5-T1) -- (B5-B1);
\foreach \i in {1,...,6} {\path[draw, gray] (C6-T\i) -- (B6-B\i);}
\path (-1, -1.75) node {\LARGE $ \circ $};
\path (-1, -0.25) node[align=center] {outer/left \\ shuffle};
\path (-1, -3.25) node[align=center] {inner/right \\ shuffle};
\path[draw, ->] (10.5, -4) to node[above, sloped, rotate=180] {strand reading direction} (10.5, 0);
\end{tikzpicture}
\caption{This figure illustrates the injectivity of shuffle products appearing in \autoref{th:nh-ccmccm-injective}. Tracing the strands from bottom to top, we see that we can reconstruct both shuffles from the knowledge of their product and the box sizes. Note that the blocks appearing in the diagram have horizontally symmetric lengths due to the nature of the bifactorization cube. The only exception is the extension of the right-most box by length $ m $. The example depicted in the figure concerns the top layer of the Beck-Chevalley cube $ B^{6+m}_{β_1, …, β_{c+m-1}, 0} $ with $ c = 6 $ and arbitrary $ m > 0 $, and $ (β_1, β_2, β_3, β_4, β_5) = (0, 1, 1, 0, 0) $ and $ β_6 = 1 $ and arbitrary $ β_7, …, β_{6+m-1} $.}
\label{fig:nh-ccmccm-composition}
\end{figure}

Along the process we show that the edge maps are split surjections so that the vertices of $ B^{c+m-i} $ are simply kernel subspaces of $ B^{c+m-i+1} $ and ultimately are kernel subspaces of $ B^{c+m} $. The intuition is that in every iteration another zero is inserted into the string of $ β $ indices, so that the inner blocks grow one strand wider in every iteration. To aid this process, we use the following notation.

% Anycross = outer shuffle / top part of diagrams. No crossings between i blocks at all.
% Mincross = inner shuffle / lower part of diagrams, where at least i crossings are present.

\begin{definition}
\label{def:nh-ccmccm-anycross}
Let $ 0 ≤ i ≤ c-1 $ and let $ β_1, …, β_{c-1-i}, β_c, …, β_{c+m-1} ∈ \{0, 1\} $. Denote by $ τ, \tilde{τ}, \tilde{σ} $ the intermediate compositions at level $ i $. We define the following subsets:
\begin{itemize}
\item $ \Anycross^i_{τ} ≔ S_{(c, c+m), \tilde{τ}} $.
\item $ \Anycross^i_{τ'} ≔ S_{(c, c+m), \tilde{τ'}} $.
\item $ \Mincross^i_{τ} ⊂ S_{\tilde{σ}, \tilde{τ}} $ is the subset of shuffles which send at least the $ i $ innermost strands of the $ τ_k + i $ block to the right half of the $ τ_k + i + i + τ_k $ block, and at least the $ i $ innermost strands of the $ i + τ_k $ block to the left half of the $ τ_k + i + i + τ_k $ block.
\item $ \Mincross^i_{τ'} ⊂ S_{\tilde{σ'}, \tilde{σ'}} $ is the subset of shuffles which cross over at least the $ i $ innermost strands, similar to $ \Mincross^i_{τ} $.
\end{itemize}
\end{definition}

The notation introduced in \autoref{def:nh-ccmccm-anycross} is slightly imprecise in the sense that the sets on the right-hand side depend on the bit $ β_c $ which is not encoded in $ τ $ or $ τ' $. This is legitimate for our purposes since the notation serves to emphasize the dependence on the intermediate compositions.

\begin{lemma}
Let $ 1 ≤ i ≤ c-1 $. Let $ β_1, …, β_{c-1-i}, β_c, …, β_{c+m-1} ∈ \{0, 1\} $. Let $ τ, τ', \tilde{τ}, \tilde{τ'}, \tilde{σ}, \tilde{σ'} $ denote the intermediate compositions. Then we have the following statements:
\begin{enumerate}
\item $ \Mincross^i_{τ'} = \{F\} $ is a singleton consisting of the permutation which is the total crossing between the two interior $ i $ blocks and the identity on all other blocks.
\item For every $ E ∈ \Anycross^i_{τ'} $, there exists a unique decomposition $ E = E^{(1)} E^{(2)} $ such that $ E^{(1)} ∈ \Anycross^i_{τ} $ and $ E^{(2)} ∈ S_{\tilde{τ}} $ and $ E^{(2)} F ∈ \Mincross^i_{τ} $ and such that the product $ E^{(2)} F $ has no bigons. This map is denoted by
\begin{align*}
δ: \Anycross^i_{τ'} × \Mincross^i_{τ'} &→ \Anycross^i_{τ} × \Mincross^i_{τ}, \\
(E, F) &↦ (E^{(1)}, E^{(2)} F).
\end{align*}
\item The map $ δ $ is injective and its image consists precisely of those pairs $ (S, T) $ where $ T $ has exactly $ i $ internal crossings.
\end{enumerate}
\end{lemma}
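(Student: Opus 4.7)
First I would verify part (1). The compositions $\tilde{\sigma'}$ and $\tilde{\tau'}$ differ only in the middle, where $\tilde{\sigma'}$ has a single $2i$-block that is refined to two $i$-blocks in $\tilde{\tau'}$. Hence every element of $S_{\tilde{\sigma'}, \tilde{\tau'}}$ is the identity outside this middle and acts solely by shuffling the two $i$-blocks into the $2i$-block. The $\Mincross^i_{\tau'}$ condition requires the $i$ innermost strands of each inner block to cross into the opposite half, and since each such block contains exactly $i$ strands this forces all of them to swap. Together with the strictly-increasing-on-blocks property of shuffles, this pins down the unique permutation $F$, namely the full swap of the two inner $i$-blocks.

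For part (2), I would invoke the standard coset factorization for shuffles applied to the chain $\tilde{\tau'} \leq \tilde{\tau} \leq (c, c+m)$: since $\tilde{\tau'}$ refines $\tilde{\tau}$ by splitting $\tau_k + i \to (\tau_k, i)$ and $i + \tau_k \to (i, \tau_k)$, the multiplication map
\begin{equation*}
S_{(c, c+m), \tilde{\tau}} \times S_{\tilde{\tau}, \tilde{\tau'}} \longrightarrow S_{(c, c+m), \tilde{\tau'}}
\end{equation*}
is a bijection. This produces the unique decomposition $E = E^{(1)} E^{(2)}$ with $E^{(1)} \in \Anycross^i_\tau$ and $E^{(2)} \in S_{\tilde{\tau}, \tilde{\tau'}}$. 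It then remains to argue that $E^{(2)} F$ lies in $\Mincross^i_\tau$ and is bigon-free. Diagrammatically, $F$ sends the inner $i$-block strands across the middle while $E^{(2)}$ only rearranges strands within each $\tilde{\tau}$ middle block, so the combined permutation sends the $i$ innermost strands of the $\tau_k + i$ block to the right half of the $\tilde{\sigma}$-block and vice versa, which is exactly the $\Mincross^i_\tau$ condition. Bigon-freeness follows because $F$'s crossings pair left-inner with right-inner strands while $E^{(2)}$'s crossings pair strands lying in a common $\tilde{\tau}$-block, so no pair of strands crosses twice in $E^{(2)} F$.

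For part (3), injectivity is immediate: since $F$ is the unique element of $\Mincross^i_{\tau'}$, from $(S, T) = \delta(E, F)$ we recover $E^{(2)} = T F^{-1}$ and $E^{(1)} = S$, hence $E = S T F^{-1}$. For the image characterization, the key observation is that $T = E^{(2)} F$ sends exactly $i$ strands from the left half of the middle $\tilde{\sigma}$-block across to the right half and vice versa, because $F$ swaps precisely the $i$ inner strands while $E^{(2)}$ keeps strands within their own $\tilde{\tau}$ middle block. Conversely, if $T \in \Mincross^i_\tau$ has exactly $i$ strands crossing over on each side, then $T F^{-1}$ has no cross-over between the two halves of the middle block, so it restricts to a $(\tilde{\tau}, \tilde{\tau'})$-shuffle and provides a preimage.

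The main obstacle will be the explicit verification of step (2), namely the no-bigon assertion and the $\Mincross^i_\tau$ membership of $E^{(2)} F$. This requires careful tracking of strand trajectories through the composition in the strand-diagram calculus of \autoref{sec:nh-algebra}. The counting becomes especially delicate when $i \geq 2$, since the inner bundles then contain multiple strands that may interact nontrivially with the surrounding $\tau_k$-strands of the refined composition $\tilde{\tau'}$, and one must verify that these interactions account exactly for the crossings forced by $F$ without introducing redundancy.
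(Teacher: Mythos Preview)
Your proposal is correct and follows essentially the same route as the paper: part (1) is declared immediate there too, part (2) is handled by the same shuffle factorization for the chain $\tilde{\tau'} \le \tilde{\tau} \le (c,c+m)$ (the paper writes it out by explicitly sorting the image values $g_1 < \cdots < g_{2\tau_k+2i}$ rather than quoting the coset bijection), and part (3) is argued by the same reconstruction $EF = ST$. The only cosmetic difference is that for injectivity the paper appeals to \autoref{th:nh-ccmccm-injective} to recover $(E,F)$ from the product $EF$, whereas you read off $E^{(1)}=S$ and $E^{(2)}=TF^{-1}$ directly; both are fine since $F$ is an involution.
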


\begin{proof}
The first statement is immediate. Regard now the second statement. Enumerate the images of $ E $ on the innermost $ τ_k $, $ i $, $ i $, $ τ_k $ blocks as $ g_1 < … < g_{τ_k + i + i + τ_k} $. Define $ E^{(1)} $ to send the left $ τ_k $ block to the values $ g_1, …, g_{τ_k} $, the left $ i $ block to $ g_{τ_k + 1}, …, g_{τ_k + i} $, the right $ i $ block to $ g_{τ_k + i + 1}, …, g_{τ_k + i + i} $ and the right $ τ_k $ block to $ g_{t_k + i + i + 1}, …, g_{τ_k + i + i + τ_k} $. We have $ E = E^{(1)} E^{(2)} $ where $ E^{(2)} $ is a permutation on the left $ τ_k + i $ block and on the right $ i + τ_k $ block. By definition, we have $ E^{(1)} ∈ \Anycross^i_{τ} $ and $ E^{(2)} ∈ S_{\tilde{τ}} $. Moreover we have $ E^{(1)} F ∈ \Mincross^i_{τ} $ because $ F ∈ \Mincross^i_{τ} $ and $ E^{(1)} $ keeps the left $ τ_k + i $ block and the right $ i + τ_k $ block separated. Similarly, the product $ E^{(1)} F $ has no bigons.

Regard the third statement. From a given image element $ (S, T) = δ(E, F) $, we can reconstruct the product $ EF $ since $ EF = ST $. From $ EF $ we can reconstruct the pair $ (E, F) $, since the product map $ \Anycross^i_{τ'} × \Mincross^i_{τ'} → S_{2c+m} $ is injective. We conclude $ δ $ is injective. Addressing the remaining claims, we note that any $ δ(E, F) $ has precisely $ i $ interior crossings since $ E $ keeps the left $ τ_1, …, τ_k, i $ blocks separate from the right $ i, τ_k, …, τ_1 (+m) $ blocks and $ F $ adds precisely the crossing between the innermost $ i $ blocks. Let now $ (S, T) ∈ \Anycross^i_{τ} × \Mincross^i_{τ} $ be a pair such that $ T $ has the innermost $ i $ strands of the left $ τ_k + i $ block crossing over entirely to the right half of the $ τ_k + i + i + τ_k $ block and the innermost $ i $ strands of the right $ i + τ_k $ block crossing over entirely to the left half of the $ τ_k + i + i + τ_k $ block, but no further overcrossings. Putting $ E = STF $ finishes the proof.
\end{proof}

The next step is to calculate the intermediate Beck-Chevalley cubes $ B^{c+m-i} $ explicitly. We achieve this by iterating from $ i = 0 $ to $ i = c-1 $. The construction proceeds in an object-wise fashion with fixed $ T ∈ \Perf \NH_{c, c+m} $.

\begin{lemma}
\label{th:nh-ccmccm-iteration}
Let $ T ∈ \Perf \NH_{c, c+m} $. We have the following statements for $ i = 0, …, c-1 $.
\begin{enumerate}
\item Let $ β_1, …, β_{c-1-i}, β_c, …, β_{c+m-1} ∈ \{0, 1\} $. Denote by $ τ $ and $ τ' $ the intermediate compositions. Then the following map is a split surjection:
\begin{align*}
π: \Map(\Anycross^i_τ, \Map(\Mincross^i_τ, T)) &→ \Map(\Anycross^i_{τ'}, \Map(\Mincross^i_{τ'}, T)), \\
φ &↦ (E ↦ (F ↦ φ(E^{(1)})(E^{(2)} F))).
\end{align*}
Here $ E = E^{(1)} E^{(2)} $ is the $ δ $ splitting of $ E $, in other words $ (E^{(1)}, E^{(2)} F) = δ(E, F) $.
\item Let $ β_1, …, β_{c-i-1}, β_c, …, β_{c+m-1} ∈ \{0, 1\} $. Denote by $ τ $ and $ τ' $ the intermediate compositions. Then the following diagram commutes:
\begin{equation*}
\begin{tikzcd}
B^{c+m-i}_{β_1, …, β_{c-1-i}, 0, β_c, …, β_{c+m-1)}} (T) \arrow[d, "\sim", "α"'] \arrow[r, "e"] & B^{c+m-i}_{β_1, …, β_{c-i-1}, 1, β_c, …, β_{c+m-1}} (T) \arrow[d, "β"', "\sim"] \\
\Map(\Anycross^i_τ, \Map(\Mincross^i_τ, T)) \arrow[r, "π"] & \Map(\Anycross^i_{τ'}, \Map(\Mincross^i_{τ'}, T)).
\end{tikzcd}
\end{equation*}
Here the upper horizontal map is the edge map of the intermediate Beck-Chevalley cube $ B^{c+m-i} $. In case $ i = 0 $, the indices on the upper horizontal are instead to be read as $ β_1, …, β_{c+m-1}, 0 $ and $ β_1, …, β_{c+m-1}, 1 $.
\item Let $ β_1, …, β_{c-(i+1)}, β_c, …, β_{c+m-1} ∈ \{0, 1\} $. Denote by $ τ $ the intermediate composition at level $ i+1 $. Then there is a natural identification
\begin{equation*}
B^{c+m-(i+1)}_{β_1, …, β_{c-(i+1)}, β_c, …, β_{c+m-1}} (T) ≅ \Map(\Anycross^{i+1}_τ, \Map(\Mincross^{i+1}_τ, T)).
\end{equation*}
\end{enumerate}
\end{lemma}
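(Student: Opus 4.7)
My plan is to prove the three statements simultaneously by induction on $ i $, starting from $ i = 0 $ and iterating up to $ i = c-1 $. At level $ i $, the identifications $ \alpha $ and $ \beta $ needed in statement (2) are supplied by statement (3) at level $ i-1 $, with the base case furnished by \autoref{th:nh-ccmccm-BCdiags}. The natural order within each level is: first match the edge map with $ \pi $ (statement (2)), then exhibit a section of $ \pi $ (statement (1)), then identify the next intermediate cube with the kernel of $ \pi $ (statement (3)).

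For statement (1), I would define the section $ s \colon \Map(\Anycross^i_{\tau'}, \Map(\Mincross^i_{\tau'}, T)) \to \Map(\Anycross^i_\tau, \Map(\Mincross^i_\tau, T)) $ by supporting it on the image of $ \delta $: set $ s(\psi)(S)(R) = \psi(E)(F) $ whenever $ \delta(E, F) = (S, R) $, and zero otherwise. Injectivity of $ \delta $ makes this well-defined, and the identity $ \pi \circ s = \id $ is immediate from the formula $ \delta(E, F) = (E^{(1)}, E^{(2)} F) $ that is built into the definition of $ \delta $.

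Statement (2) is where I expect the main obstacle. The edge map $ e $ of $ B^{c+m-i} $ is the map induced on iterated fibers by the Dyckerhoff-Wedrich edge maps described in \autoref{fig:prelim-bc-edgemaptop} and \autoref{fig:prelim-bc-edgemapvertical} as chains of units, commutativity squares, and counits. The plan is to translate each of these elementary natural transformations into an operation on the combinatorial mapping spaces: a unit insertion enlarges a shuffle set through an $ \Ind \dashv \Res $ adjunction (see \autoref{th:diag-indres-ind}), a commutativity square relabels shuffles via the injectivity of the shuffle-product map of \autoref{th:nh-ccmccm-injective}, and a counit contracts a pair of adjoints, realizing the splitting $ E = E^{(1)} E^{(2)} $. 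Tracking the full chain in this combinatorial language, the composite should agree with $ \pi $ precisely because the identity $ \delta(E, F) = (E^{(1)}, E^{(2)} F) $ is the shuffle-level incarnation of the $ \infty $-categorical naturality data. The delicate aspect is that the iterated fibers do not modify the form of the edge transformations, only their domain of definition, so the combinatorial identification is preserved step by step.

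For statement (3), once (1) is established, the total fiber of the split surjection $ \pi $ coincides with its kernel. The kernel consists of those $ \varphi $ that vanish on the image of $ \delta $. By the third part of the previous lemma, this image is exactly the set of pairs $ (S, R) \in \Anycross^i_\tau \times \Mincross^i_\tau $ with $ R $ having exactly $ i $ internal crossings. Hence $ \ker(\pi) $ is the space of functions supported on pairs where $ R $ has at least $ i+1 $ internal crossings. A direct combinatorial check against \autoref{def:nh-ccmccm-anycross} shows that this space is precisely $ \Map(\Anycross^{i+1}_\tau, \Map(\Mincross^{i+1}_\tau, T)) $, with the compositions now taken at level $ i+1 $: the level-$ (i+1) $ intermediate compositions arise from the level-$ i $ ones by inserting an additional zero in the middle of the binary string, which on shuffles is exactly the demand of one more internal crossing.
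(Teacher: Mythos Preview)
Your proposal is correct and follows essentially the same inductive scheme as the paper: prove (1) via the injectivity of $\delta$ (the paper rewrites $\pi$ as precomposition with $\delta$, you write down an explicit section supported on $\operatorname{Im}\delta$; these are equivalent), deduce (3) from (1)+(2) by identifying the kernel of $\pi$ with maps vanishing on $\operatorname{Im}\delta$ and using the previous lemma's characterization of that image, and propagate (2) to the next level by observing that the edge maps of the intermediate cube are restrictions of the old edge maps to kernels.

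The one place your approach diverges from the paper is the base case of (2). You propose to trace the full Dyckerhoff--Wedrich chain of units, commutativity squares and counits and translate each step combinatorially. The paper instead identifies the edge map $e$ directly as ``restriction and recast'' $\Hom_{\NH_\tau}(\NH_{c,c+m},\Hom_{\NH_\tau}(\NH_\sigma,T)) \to \Hom_{\NH_{\tau'}}(\NH_{c,c+m},\Hom_{\NH_{\tau'}}(\NH_{\sigma'},T))$ and then uses right $\NH_{\tau'}$-linearity in one line: $e(\gamma)(E^{(1)}E^{(2)})(F) = (e(\gamma)(E^{(1)}).E^{(2)})(F) = e(\gamma)(E^{(1)})(E^{(2)}F)$. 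Your route would work but is more laborious; the paper's shortcut is that the module structure already encodes the splitting $E = E^{(1)}E^{(2)}$, so no step-by-step trace through the Beck--Chevalley chain is needed.
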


\begin{proof}
We prove the claim in four parts. In the first part, we show that the first statement holds for every $ i $. In the second part, we show that the second statement holds for $ i = 0 $. In the third part, we show that the second statement implies the third. In the fourth part, we explain that if the second and third statements hold for $ i $, then the second statement holds for $ i+1 $.

% Step 1
For the first part of the proof, we explain that the first statement holds for every $ i $. Indeed, $ δ $ is an injection and an alternative way of writing the map is
\begin{align*}
\tilde{π}: \Map(\Anycross^i_τ × \Mincross^i_τ, T) &→ \Map(\Anycross^i_{τ'} × \Mincross^i_{τ'}, T),  \\
φ &↦ φ ∘ δ.
\end{align*}
%
% Step 2
For the second part of the proof, we explain that the second statement holds in the base case $ i = 0 $. The intermediate compositions are given as follows:
\begin{align*}
τ &= (τ_1, …, τ_k) ≔ ψ^{-1} (β_1, …, β_{c-1}) ∈ \Comp(c), \\
τ' &= τ, \\
\tilde{τ} &= (τ_1, …, τ_k, τ_k, …, τ_1 | m) ≔ ψ^{-1} (β_1, …, β_{c-1}, 0, β_{c-1}, …, β_1, β_c, 0^{m-1}) ∈ \Comp(2c+m), \\
\tilde{τ'} &= (τ_1, …, τ_k, τ_k, …, τ_1 | m) ≔ ψ^{-1} (β_1, …, β_{c-1}, 1, β_{c-1}, …, β_1, β_c, 0^{m-1}) ∈ \Comp(2c+m), \\
\tilde{σ} &= (τ_1, …, τ_k + τ_k, …, τ_1 | m) ≔ ψ^{-1} (β_1, …, β_{c-1}, 0, β_{c-1}, …, β_1, β_c, 0^{m-1}) ∈ \Comp(2c+m), \\
\tilde{σ'} &= (τ_1, …, τ_k, τ_k, …, τ_1 | m) ≔ ψ^{-1} (β_1, …, β_{c-1}, 1, β_{c-1}, …, β_1, β_c, 0^{m-1}) ∈ \Comp(2c+m).
\end{align*}
Regard the edge map, given by restriction and recast:
\begin{equation*}
e: \Hom_{\NH_{τ}} (\NH_{c, c+m}, \Hom_{\NH_τ} (\NH_σ, T)) → \Hom_{\NH_{τ'}} (\NH_{c, c+m}, \Hom_{\NH_{τ'}} (\NH_{σ'}, T)).
\end{equation*}
Denote by $ α $ and $ β $ the identifications as in the diagram. Let $ γ ∈ \Hom_{\NH_{τ}} (\NH_{c, c+m}, \Hom_{\NH_τ} (\NH_σ, T)) $ and $ E ∈ S_{(c, c+m), τ'} $, $ F ∈ S_{(σ', τ')} = \Mincross^0_{τ'} $. Write $ E = E^{(1)} E^{(2)} $ in terms of the $ δ $ splitting. Since $ e(γ) $ is right $ \NH_{τ'} $-linear, we finish the part of the proof by calculating
\begin{align*}
β(e(γ))(E)(F) &= e(γ)(E)(F) = e(γ)(E^{(1)} E^{(2)})(F) \\
& = (e(γ)(E^{(1)}).E^{(2)})(F) = e(γ)(E^{(1)})(E^{(2)} F) = π(α(γ)).
\end{align*}
%
% Step 3
For the third part of the proof, we explain that for any $ i $, the second statement implies the third. Since the second statement holds, it suffices to compute the fiber of $ π $. Since $ π $ is a split surjection, its fiber is precisely the kernel. The kernel of $ π $ consists of those $ φ $ which vanish on the image of $ δ $. Since the image of $ δ $ consists of those pairs $ (S, T) $ where $ T $ has full crossings between the two innnermost $ i $ parts of the $ τ_k + i $ and $ i + τ_k $ blocks but no more, the statement $ φ ∈ \Ker(π) $ is equivalent to $ φ $ taking only values on those pairs that swap at least $ i+1 $ of the innermost strands. This proves the third statement.

% Step 4
The fourth part of the proof is analogous to the second part of the proof, in the sense that once the intermediate Beck-Chevalley cube $ B^{c+m-(i+1)} $ is established, the new edge maps are simply restrictions of the existing edge maps to the kernels. This implies the second statement for $ i+1 $ because the value on a pair $ (E, F) $ under the restriction of an element $ φ $ to the kernel of $ π $ is determined by the value of $ φ $ on an appropriate alternative splitting of the product $ EF $. This finishes the proof.
\end{proof}

We are now ready to inspect the total fiber of the Beck-Chevalley cube.

\begin{proposition}
\label{th:nh-ccmccm-vanishing}
The total fiber of $ \BC(Q_{\NHS_{2c+m}} ((c, c+m), (c, c+m))) $ vanishes.
\end{proposition}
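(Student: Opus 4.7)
The plan is to compute the total fiber iteratively by applying \autoref{th:nh-ccmccm-iteration} to reduce $ \BC(Q_{\NHS_{2c+m}} ((c, c+m), (c, c+m))) $ step by step. I would begin at $ i = 0 $ and proceed through $ i = 1, \ldots, c-1 $, at each step taking fibers along the current axis (the top/bottom axis at $ i = 0 $, and $ \beta_{c-i} $ for $ i > 0 $). By item 1 of the lemma, each edge map $ \pi $ is a split surjection, so the fiber coincides with its kernel, and item 3 expresses the resulting cube vertex as $ \Map(\Anycross^{i+1}, \Map(\Mincross^{i+1}, T)) $. After $ c $ iterations, this leaves the $ m $-dimensional intermediate cube $ B^m $.

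Next, I would process the remaining $ m $ axes $ \beta_c, \ldots, \beta_{c+m-1} $ by a parallel iteration. These axes refine the outer $ (c+m) $ block of the intermediate composition $ \tilde{\tau} $, and I would prove an analog of \autoref{th:nh-ccmccm-iteration} tailored to the outer refinement. The analog has the same combinatorial structure: the relevant edge maps $ \pi $ are again split surjections, and their kernels are identified with the subspace of maps supported on shuffles that carry one additional outer refinement, via a splitting $ \delta $ adapted to the outer block. Each such step again has the form of taking a kernel, so the computation of the total fiber ultimately amounts to an intersection of nested kernels.

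To conclude that the total fiber vanishes, I would observe that the accumulated constraints from all $ c + m $ fibrations force the supporting shuffles to simultaneously perform the full palindromic inner swap of \autoref{def:nh-ccmccm-anycross} and the complete refinement of the outer $ (c+m) $ block. I would argue combinatorially that no $ ((c, c+m), \tilde{\tau}) $-shuffle in $ S_{2c+m} $ can satisfy both conditions while respecting the $ (c, c+m) $ block structure, so that the final space of admissible mapping data is empty, hence zero.

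The main obstacle is the formalization of the outer-axis analog of \autoref{th:nh-ccmccm-iteration}, for which split-surjectivity and the kernel description must be verified anew. Although the basic mechanism is parallel to the inner case, the asymmetric role of $ \beta_c $ (which governs whether the outer block stays merged or splits in $ \tilde{\tau} $) requires careful bookkeeping to fit smoothly into the iterative framework.
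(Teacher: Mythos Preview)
Your first phase — iterating \autoref{th:nh-ccmccm-iteration} for $i = 0, \ldots, c-1$ to reach the $m$-dimensional cube $B^m$ — matches the paper exactly.

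The gap is in your second phase and in the conclusion. After the first $c$ iterations, the vertices of $B^m$ are \emph{not} empty: each is identified with $\Map(S_{(c+m),(c,m)}, T)$, which has rank $\binom{c+m}{c}$ over $T$ (in the worked example $c=2$, $m=1$ of \autoref{sec:example-Q_23_23}, each vertex of $B^1$ carries three diagrams). So your proposed endgame — arguing that ``no shuffle can satisfy both conditions'' and hence the admissible mapping data is empty — is simply false.

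The paper's mechanism for vanishing is different and much shorter. It computes $\Anycross^c_\tau$ and $\Mincross^c_\tau$ explicitly for $\beta_c = 0$ and for $\beta_c = 1$, and observes that in \emph{both} cases the product $\Anycross^c_\tau \times \Mincross^c_\tau$ is in bijection with the \emph{same} set $S_{(c+m),(c,m)}$, with the edge map along $\beta_c$ becoming the identity under this identification. Hence $B^{m-1}$ — the fiber along $\beta_c$ — is already the zero cube, and the total fiber vanishes without any further iteration. The remaining axes $\beta_{c+1}, \ldots, \beta_{c+m-1}$ play no role: the bifactorization cube is constant in the $\eta$-indices, so no ``outer-axis analog'' of \autoref{th:nh-ccmccm-iteration} is needed.

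In short, the correct picture is not that constraints accumulate until no shuffles remain, but that the two sides of the $\beta_c$-edge are parametrized by identical shuffle sets and the edge map is an isomorphism. Your proposal misidentifies the source of the vanishing.
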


\begin{proof}
Let $ β_c, …, β_{c+m-1} ∈ \{0, 1\} $. Using $ i = c-1 $ in \autoref{th:nh-ccmccm-iteration}, we obtain
\begin{equation*}
B^m_{β_c, …, β_{c+m-1}} (T) ≅ \Map(\Anycross^c_τ, \Map(\Mincross^c_τ, T)).
\end{equation*}
Let us describe the sets $ \Anycross^c_τ $ and $ \Mincross^c_τ $ explicitly, depending on the value of the remaining indices $ β_c, …, β_{c+m-1} $:
\begin{itemize}
\item In case $ β_c = 0 $, the intermediate compositions associated with $ β_c, …, β_{c+m-1} $ at level $ c-1 $ are $ \tilde{τ} = (c, c + m) $ and $ \tilde{σ} = (c+c + m) $. We have $ \Anycross^c_τ = \{\id\} $. An element $ F ∈ \Mincross^c_τ $ amounts to a permutation which, with respect to the source composition $ (c, c, m) $ and the target composition $ (c, c+m) $, shuffles the first source $ c $ block into the target $ c+m $ block and the second source $ c $ block into the target $ c $ block and the source $ m $ block into the target $ c + m $ block.
\item In case $ β_c = 1 $, the intermediate compositions associated with $ β_c, …, β_{c+m-1} $ at level $ c-1 $ are $ \tilde{τ} = (c, c, m) $ and $ \tilde{σ} = (c+c, m) $. We have $ \Anycross^c_τ = \id_c × S_{(c+m), (c, m)} $ and $ \Mincross^c_τ = \{X_c × \id_m\} $ where $ X_c ∈ S_{2c} $ denotes the full crossing of the first $ c $ and second $ c $ blocks.
\end{itemize}
In both cases the set $ \Anycross^c_τ × \Mincross^c_τ $ is identified with $ S_{(c+m), (c, m)} $ and we conclude that $ B^{m-1} $ is the zero cube. This finishes the proof.
\end{proof}

\appendix
\section{Examples}
\label{sec:examples}
In this section we demonstrate our overall evaluation strategy for the total fibers of Beck-Chevalley cubes in examples. The first example is the $ A_2 $-schober $ \NHS_{3} $. The second example is the total fiber of the specific Beck-Chevalley cube $ \BC(Q_{\NHS_5} ((2, 3), (2, 3))) $, which is evaluated along the procedure of \autoref{sec:nh-ccmccm}.

\subsection{The $ \NH_3 $ schober}
In this section, we illustrate the $ A_2 $-schober axioms of the $ \NH_3 $-schober. We freely use the notation $ I, F, G, H $ as in \cite[section 2]{Dyckerhoff-Wedrich}. We address all $ A_2 $-schober axioms and explain in elementary manners why they hold. The schober diagram $ χ: \{0, 1\}^2 → \Cat_∞ $ is given as follows:
\begin{equation*}
\begin{tikzcd}
\Perf \NH_3 \arrow[r, "I"] \arrow[d, "H"] & \Perf \NH_{1,2} \arrow[d, "F"] \\
\Perf \NH_{2,1} \arrow[r, "G"] & \Perf \NH_{1, 1, 1}
\end{tikzcd}
\end{equation*}

Note that in this diagram the first axis is the horizontal axis, oriented from left to right, and the second axis is the vertical axis, oriented from top to bottom.

\paragraph*{Preliminaries.}
In this section, we set up basic notation for working out the $ A_2 $-schober axioms for the $ \NHS_3 $ schober. It is clear that adjunctability, recursiveness and far-commutativity hold. We shall therefore focus on the bifactorization cubes and their Beck-Chevalley cubes. We shall focus on the bifactorization cubes $ Q_{\NHS_3} ((1, 2), (1, 2)) $ and $ Q_{\NHS_3} ((1, 2), (2, 1)) $ since the other two bifactorization cubes $ Q_{\NHS_3} ((2, 1), (1, 2)) $ and $ Q_{\NHS_3} ((2, 1), (2, 1)) $ are dealt with in a similar fashion.

We start by writing $ \NH_3 = III \NH_{1, 2} ⊕ XI \NH_{1, 2} ⊕ W \NH_{1, 2} $ where we use the symbols

\begin{center}
\begin{tikzpicture}
\begin{scope}
\path[draw] (0, 0) -- (1, 0) coordinate[pos=0.25] (A-bot) coordinate[pos=0.5] (B-bot) coordinate[pos=0.75] (C-bot);
\path[draw] (0, 1) -- (1, 1) coordinate[pos=0.25] (A-top) coordinate[pos=0.5] (B-top) coordinate[pos=0.75] (C-top);
\path[draw] (A-top) -- (A-bot);
\path[draw] (B-top) -- (B-bot);
\path[draw] (C-top) -- (C-bot);
\path (0.5, -0.5) node {$ III $};
\end{scope}
\begin{scope}[shift={(3, 0)}]
\path[draw] (0, 0) -- (1, 0) coordinate[pos=0.25] (A-bot) coordinate[pos=0.5] (B-bot) coordinate[pos=0.75] (C-bot);
\path[draw] (0, 1) -- (1, 1) coordinate[pos=0.25] (A-top) coordinate[pos=0.5] (B-top) coordinate[pos=0.75] (C-top);
\path[draw] (A-top) -- (B-bot);
\path[draw] (B-top) -- (A-bot);
\path[draw] (C-top) -- (C-bot);
\path (0.5, -0.5) node {$ XI $};
\end{scope}
\begin{scope}[shift={(6, 0)}]
\path[draw] (0, 0) -- (1, 0) coordinate[pos=0.25] (A-bot) coordinate[pos=0.5] (B-bot) coordinate[pos=0.75] (C-bot);
\path[draw] (0, 1) -- (1, 1) coordinate[pos=0.25] (A-top) coordinate[pos=0.5] (B-top) coordinate[pos=0.75] (C-top);
\path[draw] (A-top) -- (B-bot);
\path[draw] (B-top) -- (C-bot);
\path[draw] (C-top) -- (A-bot);
\path (0.5, -0.5) node {$ W $};
\end{scope}
\end{tikzpicture}
\end{center}

By means of shorthand notation, we shall denote all inductions simply by $ \Hom(\NH_{1, 2}, -) $ and do not write out forgetful functors. Whenever an inductions to $ \NH_{1, 2} $ and $ \NH_{2, 1} $ is done then the module is implicitly casted to a $ \NH_{1, 1, 1} $-module before, and whenever an induction to $ \NH_3 $ is done then the module is supposed to be either a $ \NH_{1, 2} $- or $ \NH_{2, 1} $-module. In the single case of $ I H^* H I^* $ there arises notational ambiguity, which we therefore make explicit.

\paragraph*{The bifactorization cube $ Q_{\NHS_3} ((1, 2), (2, 1)) $.}
\label{sec:example-1221}
In this section, we compute the bifactorization cube $ Q_{\NHS_3} ((1, 2), (2, 1)) $ and its Beck-Chevalley cube. The bifactorization cube is in fact 2-dimensional so that its Beck-Chevalley cube is 1-dimensional.

This specific bifactorization cube is not described in \autoref{sec:prelim-bifcube}, but according to the procedure of Dyckerhoff-Wedrich it is given by
\begin{equation*}
\begin{tikzcd}
\Perf \NH_3 \arrow[r] \arrow[d] & \Perf \NH_{2, 1} \arrow[d] \\
\Perf \NH_{1,2} \arrow[r] & \Perf \NH_{1,1,1}.
\end{tikzcd}
\end{equation*}
As usual, the $ (1,0) $-indexed entry is the top-right entry, and the $ (0,1) $-indexed entry is the bottom-left entry. Let us now compute the Beck-Chevalley cube, which is in fact a 1-dimensional diagram. Its top layer has the following single entry:
\begin{align*}
\BC(Q_{\NHS_3} ((1, 2), (2, 1)))_{0} &= \begin{array}[c]{l}
  Q_{\NHS_3} ((1, 2), (2, 1))_{01} \\
  Q_{\NHS_3} ((1, 2), (2, 1))_{01} \\
  Q_{\NHS_3} ((1, 2), (2, 1))_{00} \\
  Q_{\NHS_3} ((1, 2), (2, 1))_{10} \\
  Q_{\NHS_3} ((1, 2), (2, 1))_{10}
\end{array}
=
\begin{array}[c]{l}
  \Perf \NH_{1,2} \\
  \Perf \NH_{1,2} \\
  \Perf \NH_{3} \\
  \Perf \NH_{2,1} \\
  \Perf \NH_{2,1}
\end{array}
= H I^*.
\end{align*}
Its bottom layer has the following single entry:
\begin{align*}
\BC(Q_{\NHS_3} ((1, 2), (2, 1)))_{1} &= \begin{array}[c]{l}
  Q_{\NHS_3} ((1, 2), (2, 1))_{01} \\
  Q_{\NHS_3} ((1, 2), (2, 1))_{01} \\
  Q_{\NHS_3} ((1, 2), (2, 1))_{11} \\
  Q_{\NHS_3} ((1, 2), (2, 1))_{10} \\
  Q_{\NHS_3} ((1, 2), (2, 1))_{10}
\end{array}
=
\begin{array}[c]{l}
  \Perf \NH_{1,2} \\
  \Perf \NH_{1,2} \\
  \Perf \NH_{1,1,1} \\
  \Perf \NH_{2,1} \\
  \Perf \NH_{2,1}
\end{array}
= G^* F.
\end{align*}
The Beck-Chevalley cube is thus the single natural transformation
\begin{equation*}
\BC(Q_{\NHS_3} ((1, 2), (2, 1)))  = (HI^* ⇒ G^*F).
\end{equation*}

\begin{lemma}
Let $ T ∈ \Perf \NH_{1, 2} $. The map $ (HI^*)(T) → (G^* F)(T) $ is the natural restriction and recast $ \Hom_{\NH_{1,2}} (\NH_3, T) → \Hom_{\NH_{1,1,1}} (\NH_{2,1}, T) $.
\end{lemma}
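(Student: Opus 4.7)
The plan is to trace the Beck--Chevalley natural transformation $HI^* \Rightarrow G^*F$ through its construction from \autoref{fig:prelim-bc-edgemapvertical}, specialised to the $d = 2$ case where the $\beta$-direction is trivial. In this reduced setting the transformation decomposes as the chain of three steps
\begin{equation*}
HI^* ~\overset{\eta_G}{\Longrightarrow}~ G^*GHI^* ~\overset{\mathrm{comm}}{\Longrightarrow}~ G^*FII^* ~\overset{\epsilon_I}{\Longrightarrow}~ G^*F,
\end{equation*}
where $\eta_G$ is the unit of the adjunction $G \dashv G^*$, the middle identification uses that $GH$ and $FI$ agree as the restriction functor $\Perf \NH_3 \to \Perf \NH_{1,1,1}$, and $\epsilon_I$ is the counit of $I \dashv I^*$.

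Next I would unpack each step on a module $T \in \Perf \NH_{1,2}$. The starting point is $HI^*(T) = \Hom_{\NH_{1,2}} (\NH_3, T)$ equipped with the right $\NH_{2,1}$-action $(\phi \cdot a)(b) = \phi(ab)$ inherited from the left $\NH_3$-action on $\NH_3$. The unit $\eta_G$ sends $\phi$ to the map $a \mapsto \phi \cdot a$ in $\Hom_{\NH_{1,1,1}} (\NH_{2,1}, \Hom_{\NH_{1,2}}(\NH_3, T))$. The comm step is the identity at the module level, since both $GHI^*(T)$ and $FII^*(T)$ equal $\Hom_{\NH_{1,2}}(\NH_3, T)$ with its $\NH_{1,1,1}$-action. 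The counit $\epsilon_I$ at $T$ is evaluation at $1 \in \NH_3$, so post-composing yields the map $a \mapsto (\phi \cdot a)(1) = \phi(a)$.

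Chaining the three operations, the resulting map $HI^*(T) \to G^*F(T)$ sends $\phi$ to $a \mapsto \phi(a)$, which is precisely the restriction of $\phi : \NH_3 \to T$ along the inclusion $\NH_{2,1} \subset \NH_3$, recast as a map of right $\NH_{1,1,1}$-modules (automatic since $\NH_{1,1,1} \subset \NH_{1,2}$ makes $\phi$ already $\NH_{1,1,1}$-linear). This is exactly the natural restriction-and-recast map of the statement.

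The main obstacle is bookkeeping rather than anything deep: keeping track of right-module conventions, writing down the unit and counit of the induction/restriction adjunctions in the correct handedness, and justifying the passage from $\infty$-categorical natural transformations to their $1$-categorical restrictions. Since the paper already commits to working in the abelian world for NilHecke algebras, the reduction is legitimate and the above calculation is essentially routine.
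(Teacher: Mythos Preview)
Your proposal is correct. The three-step decomposition $HI^* \xrightarrow{\eta_G} G^*GHI^* \xrightarrow{\text{comm}} G^*FII^* \xrightarrow{\epsilon_I} G^*F$ is precisely the specialisation of the recipe in \autoref{fig:prelim-bc-edgemapvertical} to $d=2$, and your tracking of each step on modules is accurate: the unit sends $\phi$ to $a \mapsto \phi \cdot a$, the commutativity is the identity on underlying modules, and the counit evaluates at $1$, yielding $\phi \mapsto (a \mapsto \phi(a))$.

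The paper's own proof reaches the same conclusion by a somewhat different and longer route. Instead of starting with the unit of $G \dashv G^*$, it begins with the unit of $F \dashv F^*$, producing $HI^* \to HI^*F^*F$, and then passes through a chain of adjunction identifications $\Hom_{\NH_{1,2}}(\NH_3, F^*FT) \cong \Hom_{\NH_{1,1,1}}(\NH_3, FT) \cong \Hom_{\NH_{2,1}}(\NH_3, G^*FT)$ before finally evaluating at $1 \in \NH_3$. Your approach is more economical and more directly tied to the paper's own general description of the vertical Beck--Chevalley edge map; the paper's version is a more ad hoc unwinding that happens to factor through different intermediate objects. Either way the answer is the restriction-and-recast map.
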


\begin{proof}
We shall compute this map explicitly. The $ \NH_{2, 1} $-modules $ H I^* T $ and $ G^* F $ are of the form $ H I^* T = \Hom(\NH_3, T) $ and $ G^* F T = \Hom(\NH_{2, 1}, T) $.

The map $ T → F^* F T $ is given by $ t ↦ (n ↦ tn) $. Applying $ HI^* $ on both sides yields the map
\begin{align*}
\Hom(\NH_3, T) &→ \Hom(\NH_3, F^* F T), \\
φ \quad &↦ \quad [(t ↦ (n ↦ tn)) ∘ φ] = [m ↦ (n ↦ φ(m) n)].
\end{align*}
Next we have the map
\begin{align*}
\Hom(\NH_3, F^* F T) &→ \Hom(\NH_3, F T), \\
ψ \quad &↦ \quad n ↦ ψ(n)(1).
\end{align*}
We have the map
\begin{align*}
\Hom(\NH_3, F T) &→ \Hom(\NH_3, G^* F T), \\
ψ \quad &↦ \quad n ↦ (m ↦ ψ(nm)).
\end{align*}
The map $ \Hom(\NH_3, G^* F T) → G^* F T $ is simply the evaluation map at $ 1 ∈ \NH_3 $.

We now regard the composition
\begin{align*}
H \Hom(\NH_3, T) &→ H \Hom(\NH_3, \Hom(\NH_{1, 2}, FT)) \\
φ \quad &↦ \quad m ↦ (n ↦ φ(m) n) \\
&→ \Hom(\NH_3, F T) \\
&↦ φ \\
& \isoto H \Hom(\NH_3, \Hom(\NH_{2, 1}, FT)) \\
&↦ \quad n ↦ (m ↦ φ(nm)) \\
& → \Hom(\NH_{2, 1}, FT) \\
&↦ \quad φ.
\end{align*}
Simply speaking, $ φ $ starts its life as a $ \NH_{1, 2} $-module map $ \NH_3 → T $. Throughout the process it gets restricted and becomes a $ \NH_{1, 1, 1} $-module map $ \NH_{2, 1} → T $. This finishes the proof.
\end{proof}

\begin{lemma}
The kernel of the Beck-Chevalley map is the natural isomorphism $ \Perf \NH_{1,2} → \Perf \NH_{2,1} $ induced from the flip isomorphism $ \NH_{1, 2} ≅ \NH_{2, 1} $.
\end{lemma}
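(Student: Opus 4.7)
The plan is to exploit the shuffle decomposition $ \NH_3 = III · \NH_{1,2} ⊕ XI · \NH_{1,2} ⊕ W · \NH_{1,2} $ from \autoref{th:diag-algebra-moduledecomp}, turn both sides of the Beck-Chevalley map from the previous lemma into explicit finite products of copies of $ T $, and then read off the kernel together with its $ \NH_{2,1} $-module structure.

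First, evaluation on the three shuffles identifies $ \Hom_{\NH_{1,2}}(\NH_3, T) ≅ T^3 $ via $ φ ↦ (φ(III), φ(XI), φ(W)) $. By the same principle, $ \NH_{2,1} = III · \NH_{1,1,1} ⊕ XI · \NH_{1,1,1} $, which identifies $ \Hom_{\NH_{1,1,1}}(\NH_{2,1}, T) ≅ T^2 $ via $ ψ ↦ (ψ(III), ψ(XI)) $. Under these identifications the Beck-Chevalley map of the previous lemma is the projection $ (t_1, t_2, t_3) ↦ (t_1, t_2) $, which is a split surjection. Its kernel is therefore the copy of $ T $ parametrized by $ φ ↦ φ(W) $.

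The bulk of the work is to verify that the inherited right $ \NH_{2,1} $-action on this kernel agrees with the one obtained by transporting the $ \NH_{1,2} $-action on $ T $ along the flip isomorphism $ f: \NH_{1,2} → \NH_{2,1} $, which sends $ X_1, X_2, X_3 ↦ X_3, X_1, X_2 $ and $ s_2 ↦ s_1 $. For $ a ∈ \NH_{2,1} ⊂ \NH_3 $, the action reads $ (φ · a)(W) = φ(aW) $, so one rewrites $ aW $ in the shuffle basis of $ \NH_3 $ and evaluates. For the transposition generator, the braid relation immediately gives $ s_1 W = s_1 s_2 s_1 = s_2 s_1 s_2 = W s_2 $. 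For the polynomial generators, the dot-pass relations $ X_i s_i = s_i X_{i+1} + ħ $ and $ X_{i+1} s_i = s_i X_i - ħ $ allow one to push each $ X_i $ past $ W = s_2 s_1 $, producing expressions of the form $ X_1 W = W X_2 + ħ s_2 $, $ X_2 W = W X_3 + ħ s_1 $, and $ X_3 W = W X_1 - ħ(s_1 + s_2) $.

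The main (minor) obstacle is the bookkeeping of the $ ħ $-correction terms produced by the dot-pass crossings; the key observation that closes the argument is that each such correction term contains a bare factor $ s_1 $ or $ s_2 $, which lives respectively in $ XI · \NH_{1,2} $ or $ III · \NH_{1,2} $. Consequently these correction terms vanish on any $ φ $ with $ φ(III) = φ(XI) = 0 $, and the action on the kernel reduces to $ (φ · a)(W) = φ(W) · f^{-1}(a) $ for all generators $ a $ of $ \NH_{2,1} $. This is exactly the $ \NH_{2,1} $-module structure on $ T $ induced by the flip isomorphism, which completes the identification of the kernel with the flipped module $ T $.
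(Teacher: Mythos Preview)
Your proof is correct and follows essentially the same route as the paper: identify $\Hom_{\NH_{1,2}}(\NH_3,T)$ and $\Hom_{\NH_{1,1,1}}(\NH_{2,1},T)$ via the shuffle bases $\{III,XI,W\}$ and $\{III,XI\}$, read off the kernel as the $W$-component, and verify the induced $\NH_{2,1}$-action matches the flip. In fact you are more careful than the paper, which asserts the identity $N\cdot W = W\cdot\psi(N)$ for all $N\in\NH_{2,1}$ without comment; as you correctly compute, this fails on the polynomial generators by $\hbar$-terms supported on $s_1$ and $s_2$, and your observation that these lie in $XI\cdot\NH_{1,2}$ and $III\cdot\NH_{1,2}$ (hence are killed by any $\varphi$ in the kernel) is exactly what is needed to close the argument.
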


\begin{proof}
A morphism $ φ ∈ \Hom(\NH_3, T) $ is determined by $ φ(III), φ(XI), φ(W) ∈ T $. In our case, the map is a split surjection, and the fiber is simply the kernel. In order to lie in the kernel, we need precisely that $ φ(III) = φ(XI) = 0 $.

We shall now prove that the kernel is isomorphic to $ T $, where $ T $ is viewed as $ \NH_{2, 1} $-module via the algebra isomorphism $ ψ: \NH_{2, 1} → \NH_{1, 2} $ given by $ ψ(XI) = IX $, and $ ψ(ijk) = kij $ on dots-ony strand diagrams. Note that $ ψ $ is different from the mirroring map, which instead sends $ ijk $ to $ kji $. Finally, we remark that $ ψ $ has the property that $ N · W = W · ψ(N) $ for $ N ∈ \NH_{2, 1} $.

Now let us investigate the $ \NH_{2, 1} $-action on the kernel space. The element $ XI ∈ \NH_{2, 1} $ acts on the function $ φ(W) = t $ as $ (φ.XI)(III) = φ(XI · III) = 0 $ and $ (φ.XI)(XI) = φ(XI · XI) = 0 $ and $ (φ.XI)(W) = φ(XI · W) = φ(W · IX) = φ(W) · IX $. More generally for $ N ∈ \NH_{2, 1} $ we have $ (φ.N)(W) = φ(N·W) = φ(W · ψ(N)) = φ(W) · ψ(N) $. This way, we observe that the kernel is isomorphic to $ T $ with $ \NH_{2, 1} $-action on $ T $ given by $ t.n = t.ψ(n) $. In other words, the kernel is precisely $ T $ interpreted as right $ \NH_{2, 1} $-module via the standard flip. This finishes the proof.
\end{proof}

\paragraph*{The bifactorization cube $ Q_{\NHS_3} ((1, 2), (1, 2)) $.}
In this section, we regard the bifactorization cube $ Q_{\NHS_3} ((1, 2), (1, 2)) $ and its Beck-Chevalley cube. The bifactorization cube is 3-dimensional and its Beck-Chevalley cube is 2-dimensional.

The bifactorization cube is given by
\begin{equation*}
\begin{tikzcd}
  % front face (top left)
  \Perf \NH_3 \arrow[rr] \arrow[dd] \arrow[dr] && \Perf \NH_{1,2} \arrow[dd] \arrow[dr] & \\
  & \Perf \NH_{2,1} \arrow[rr] \arrow[dd] && \Perf \NH_{1,1,1} \arrow[dd] \\
  % back face (bottom left)
  \Perf \NH_{1,2} \arrow[rr] \arrow[dr] && \Perf \NH_{1,2} \arrow[dr] & \\
  & \Perf \NH_{1,1,1} \arrow[rr] && \Perf \NH_{1,1,1}
\end{tikzcd}
\end{equation*}
In this notation, the rear face captures the entries $ Q_{\NHS_3} ((1, 2), (1, 2))_{*, *, 0} $ and the front face captures the entries $ Q_{\NHS_3} ((1, 2), (1, 2))_{*, *, 1} $. Let us now compute the Beck-Chevalley cube. Its top layer has the following entries:
\begin{align*}
\BC(Q_{\NHS_3} ((1, 2), (1, 2)))_{00} &= \begin{array}[c]{l}
  Q_{\NHS_3} ((1, 2), (1, 2))_{01,0} \\
  Q_{\NHS_3} ((1, 2), (1, 2))_{01,0} \\
  Q_{\NHS_3} ((1, 2), (1, 2))_{00,0} \\
  Q_{\NHS_3} ((1, 2), (1, 2))_{10,0} \\
  Q_{\NHS_3} ((1, 2), (1, 2))_{10,0}
\end{array}
=
\begin{array}[c]{l} \Perf \NH_{1, 2} \\ \Perf \NH_{1, 2} \\ \Perf \NH_3 \\ \Perf \NH_{1, 2} \\ \Perf \NH_{1, 2} 
\end{array}
= II^*, \\
\BC(Q_{\NHS_3} ((1, 2), (1, 2)))_{10} &= \begin{array}[c]{l}
  Q_{\NHS_3} ((1, 2), (1, 2))_{01,0} \\
  Q_{\NHS_3} ((1, 2), (1, 2))_{01,1} \\
  Q_{\NHS_3} ((1, 2), (1, 2))_{00,1} \\
  Q_{\NHS_3} ((1, 2), (1, 2))_{10,1} \\
  Q_{\NHS_3} ((1, 2), (1, 2))_{10,0}
\end{array}
=
\begin{array}[c]{l} \Perf \NH_{1, 2} \\ \Perf \NH_{1,1,1} \\ \Perf \NH_{2,1} \\ \Perf \NH_{1,1,1} \\ \Perf \NH_{1, 2}
\end{array}
= F^* G G^* F.
\end{align*}
Its bottom layer has the following entries:
\begin{align*}
\BC(Q_{\NHS_3} ((1, 2), (1, 2)))_{01} &= \begin{array}[c]{l}
  Q_{\NHS_3} ((1, 2), (1, 2))_{01,0} \\
  Q_{\NHS_3} ((1, 2), (1, 2))_{01,0} \\
  Q_{\NHS_3} ((1, 2), (1, 2))_{11,0} \\
  Q_{\NHS_3} ((1, 2), (1, 2))_{10,0} \\
  Q_{\NHS_3} ((1, 2), (1, 2))_{10,0}
\end{array}
=
\begin{array}[c]{l} \Perf \NH_{1, 2} \\ \Perf \NH_{1, 2} \\ \Perf \NH_{1,2} \\ \Perf \NH_{1, 2} \\ \Perf \NH_{1, 2}
\end{array}
= \Id_{\Perf \NH_{1,2}}, \\
\BC(Q_{\NHS_3} ((1, 2), (1, 2)))_{11} &= \begin{array}[c]{l}
  Q_{\NHS_3} ((1, 2), (1, 2))_{01,0} \\
  Q_{\NHS_3} ((1, 2), (1, 2))_{01,1} \\
  Q_{\NHS_3} ((1, 2), (1, 2))_{11,1} \\
  Q_{\NHS_3} ((1, 2), (1, 2))_{10,1} \\
  Q_{\NHS_3} ((1, 2), (1, 2))_{10,0}
\end{array}
=
\begin{array}[c]{l} \Perf \NH_{1, 2} \\ \Perf \NH_{1,1,1} \\ \Perf \NH_{1,1,1} \\ \Perf \NH_{1,1,1} \\ \Perf \NH_{1, 2}
\end{array}
= F^* F.
\end{align*}
In total, the Beck-Chevalley cube $ \BC(Q_{\NHS_3} ((1, 2), (1, 2))) $ reads
\begin{equation*}
\begin{tikzcd}
II^* \arrow[d] \arrow[r] & F^* G G^* F \arrow[d] \\
\Id_{\Perf \NH_{1, 2}} \arrow[r] \arrow[r] & F^* F
\end{tikzcd}
\end{equation*}

\begin{lemma}
The Beck-Chevalley square reads explicitly
\begin{equation*}
\begin{tikzcd}
\Hom(\NH_3, T) \arrow[r] \arrow[d] & \Hom(\NH_{1, 2}, \Hom(\NH_{2, 1}, T)) \arrow[d] \\
T \arrow[r] & \Hom(\NH_{1, 2}, T).
\end{tikzcd}
\end{equation*}
\end{lemma}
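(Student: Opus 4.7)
The plan is to evaluate each of the four vertices and four edges of the abstract Beck-Chevalley square in concrete form, and then read off commutativity from the $\NH_{1,2}$-linearity of the test element.

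I would first identify the vertices by applying the stacking convention \autoref{conv:prelim-bc-functorconv} together with \autoref{th:diag-indres-ind}. This yields $II^*T = \Hom_{\NH_{1,2}}(\NH_3, T)$ in the top left, $F^*GG^*FT = \Hom_{\NH_{1,1,1}}(\NH_{1,2}, \Hom_{\NH_{1,1,1}}(\NH_{2,1}, T))$ in the top right, $\Id\, T = T$ in the bottom left, and $F^*FT = \Hom_{\NH_{1,1,1}}(\NH_{1,2}, T)$ in the bottom right. Under the shorthand that $\Hom$ is always over the smaller algebra and restrictions are implicit, these match the corners of the displayed square.

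Next I would identify the four edges. The left vertical is the counit of $(I, I^*)$, sending $\phi \mapsto \phi(1)$. The bottom horizontal is the unit of $(F, F^*)$, sending $t \mapsto (n \mapsto tn)$. The right vertical is the counit of $(G, G^*)$ applied inside the outer $\Hom_{\NH_{1,1,1}}(\NH_{1,2}, -)$, sending $\xi \mapsto (n \mapsto \xi(n)(1))$. The top horizontal is the full Beck-Chevalley composite; I would commit to the chain of unit, commutator and counit natural transformations depicted in \autoref{fig:prelim-bc-edgemaptop} and trace $\phi \in \Hom_{\NH_{1,2}}(\NH_3, T)$ through the intermediate $\NH_{1,1,1}$-module stages, identifying the composite with the map $\phi \mapsto (n \mapsto (m \mapsto \phi(nm)))$.

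I expect the main work to be this concrete evaluation of the top horizontal edge, since by \autoref{def:prelim-bc-def} the BC map is only specified up to a homotopical choice of realization, and one must carry $\phi$ through three separate layers of units and commutators without losing the right $\NH_{1,1,1}$-action on the inner slot. Once that identification is in hand, commutativity of the square becomes immediate from the $\NH_{1,2}$-linearity of $\phi$: the down-then-right route sends $\phi$ to the map $n \mapsto \phi(1) n$, while the right-then-down route sends $\phi$ to $n \mapsto \phi(n \cdot 1) = \phi(n)$, and these agree.
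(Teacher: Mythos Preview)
Your proposal is correct and takes essentially the same approach as the paper: identify the four vertices via the $\Hom$-description of induction, then trace the edge maps through the unit/counit/commutator chain, arriving at $\phi \mapsto (n \mapsto (m \mapsto \phi(nm)))$ for the top edge and evaluation at $1$ for the right edge. The commutativity check you add at the end is not needed for this lemma --- the square commutes by construction of the Beck--Chevalley cube, and the paper defers the bicartesian verification to the next lemma via an explicit basis computation --- but it is correct.
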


\begin{proof}
We have the composition functors
\begin{align*}
I H^* ∘ H I^*: \Perf \NH_{1, 2} &→ \Perf \NH_{2, 1} → \Perf \NH_{1, 2}, \\
F^* G ∘ G F^*: \Perf \NH_{1, 2} &→ \Perf \NH_{2, 1} → \Perf \NH_{1, 2}.
\end{align*}
It is our task to compute the chained natural morphism $ (I H^* ∘ H I^*) T → (F^* G ∘ G^* F) T $  for every $ T ∈ \Perf \NH_{1, 2} $. Both domain and codomain are in $ \Perf \NH_{1, 2} $.

First we construct the morphism $ H I^* T → G^* F T $. It is the following, simply the restriction:
\begin{equation*}
\Hom(\NH_3, T) → \Hom(\NH_{2, 1}, T).
\end{equation*}
Second we apply $ I H^* $ to both sides. The resulting map acts simply as forgetful recast in the inner hom:
\begin{equation*}
\Hom(\NH_3, \Hom(\NH_3, T)) → \Hom(\NH_3, \Hom(\NH_{2, 1}, T)).
\end{equation*}
Third we compute the map $ I H^* (G^* F T) → F^* G (G^* F T) $. This is given by the forgetful recast in the outer hom:
\begin{equation*}
\Hom(\NH_3, \Hom(\NH_{2, 1}, T)) → \Hom(\NH_{1, 2}, \Hom(\NH_{2, 1}, T))
\end{equation*}
Fourth we compute the map $ I I^* T → I H^* H I^* T $. This is simply the standard extension map. Note that the inner $ \Hom(\NH_3, T) $ is preserved and the outer $ \Hom $ is new:
\begin{align*}
\Hom(\NH_3, T) &→ \Hom(\NH_3, H \Hom(\NH_3, T)), \\
φ \quad &↦ \quad n ↦ (m ↦ φ(nm)).
\end{align*}
The overall composition $ I I^* T → (F^* G ∘ G^* F) T $ is the composition of these three maps and thus
sends $ φ $ to its standard extension and then inner and outer recasts.

Furthermore the object $ F^* F T $ is $ \Hom(\NH_{1, 2}, T) $ and the map $ F^* G G^* F T → F^* F T $ is given by evaluating the inner hom at $ 1 $. This finishes the proof.
\end{proof}

Proving defect-vanishing for the pair of compositions $ ((a, b), (c, d)) = (( 1, 2), (1, 2)) $ amounts to checking that this square is bicartesian. We do this in the following lemma.

\begin{lemma}
The Beck-Chevalley square is bicartesian.
\end{lemma}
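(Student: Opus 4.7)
The plan is to reduce bicartesianness of the Beck-Chevalley square to short-exactness of a three-term sequence and then verify that sequence by combining a dimension count with injectivity on the left and surjectivity on the right. Since we work in the stable $\infty$-category $\Perf \NH_{1,2}$, bicartesianness is equivalent to the vanishing of the total fiber, and because all four vertices and all four maps descend from honest module-theoretic operations, this amounts to exactness of
\begin{equation*}
0 \to \Hom_{\NH_{1,2}}(\NH_3, T) \to \Hom_{\NH_{1,1,1}}(\NH_{1,2}, \Hom_{\NH_{1,1,1}}(\NH_{2,1}, T)) \oplus T \to \Hom_{\NH_{1,1,1}}(\NH_{1,2}, T) \to 0,
\end{equation*}
where the first map is $\phi \mapsto \bigl(((n, m) \mapsto \phi(nm)),\, \phi(1)\bigr)$ and the second map is $(\psi, t) \mapsto \bigl(n \mapsto \psi(n)(1) - tn\bigr)$.

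By \autoref{th:diag-algebra-moduledecomp}, the four vertices are free of ranks $3$, $4$, $1$, $2$ over $T$, using the shuffle sets of sizes $|S_{(3),(1,2)}| = 3$ (with basis $III, XI, W$), $|S_{(1,2),(1,1,1)}|\cdot|S_{(2,1),(1,1,1)}| = 2\cdot 2 = 4$, and $|S_{(1,2),(1,1,1)}| = 2$. Since $3 + 2 = 4 + 1$, the Euler characteristic of the proposed sequence vanishes, so by rank-nullity it suffices to check (a) the composition is zero, (b) injectivity of the first map, and (c) surjectivity of the second. Claim (a) is immediate from right $\NH_{1,2}$-linearity: $\phi(1)\cdot n = \phi(1\cdot n) = \phi(n)$.

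For injectivity, suppose $\phi$ maps to $(0,0)$, so $\phi(nm) = 0$ for every $n \in \NH_{1,2}$ and $m \in \NH_{2,1}$. Using the explicit factorizations $III = 1\cdot 1$, $XI = 1\cdot XI$, and $W = IX \cdot XI$ (with $IX \in \NH_{1,2}$ and $XI \in \NH_{2,1}$), $\phi$ vanishes on the right $\NH_{1,2}$-basis of $\NH_3$, hence $\phi \equiv 0$. For surjectivity, given $\chi \in \Hom_{\NH_{1,1,1}}(\NH_{1,2}, T)$, I define $\psi$ on the right $\NH_{1,1,1}$-basis $\{1, IX\}$ of $\NH_{1,2}$ and on the right $\NH_{1,1,1}$-basis $\{1, XI\}$ of $\NH_{2,1}$ by $\psi(n)(1) := \chi(n)$ and $\psi(n)(XI) := 0$, and extend $\NH_{1,1,1}$-bilinearly; then $(\psi, 0)$ lifts $\chi$.

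The only genuinely combinatorial step is the factorization $W = IX \cdot XI$, which reflects the fact that the full cyclic shuffle on three strands decomposes as two adjacent transpositions straddling the $(1,2)/(2,1)$ interface in $\NH_3$. All remaining steps are diagram-chasing once the four maps are written out explicitly, so I expect no genuine obstacle beyond bookkeeping the module structures.
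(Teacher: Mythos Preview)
Your proof is correct and follows essentially the same route as the paper: both identify the four vertices with $T^3$, $T^4$, $T$, $T^2$ via the shuffle bases $\{III,XI,W\}$, $\{III,IX\}\times\{III,XI\}$, and $\{III,IX\}$, and then verify bicartesianness by direct inspection. The paper writes out all four maps as explicit matrices (e.g.\ the top map is $(A,B,C)\mapsto(A,A\cdot IX,B,C)$ and the right map is projection to the first two coordinates) and simply observes the pullback property, whereas you package the same data as exactness of the associated three-term sequence.

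One small caution: the step ``$3+2=4+1$ plus injectivity on the left and surjectivity on the right implies exactness by rank--nullity'' is valid for finite-dimensional vector spaces but not automatic for a general $T\in\Perf\NH_{1,2}$, since the maps involve right multiplication by $IX$ and $T$ need not be finite-dimensional over a field. Your argument already contains the fix, though: the explicit section $\chi\mapsto(\psi,0)$ splits off a copy of $T^2$ from $T^5$, and the remaining kernel is visibly $\{(t,a_2,t\cdot IX,a_4,t):t,a_2,a_4\in T\}$, onto which your first map $(A,B,C)\mapsto(A,B,A\cdot IX,C,A)$ is the identity in the coordinates $(t,a_2,a_4)=(A,B,C)$. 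This last identification is exactly how the paper concludes.
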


\begin{proof}
Since the square is commutative, it only remains to check that it is a pullback square of vector spaces. We have the decompositions as free modules
\begin{align*}
\NH_3 &= III \NH_{1, 2} ⊕ XI \NH_{1, 2} ⊕ W \NH_{1, 2}, \\
\NH_{2, 1} &= III \NH_{1, 1, 1} ⊕ XI \NH_{1, 1, 1}, \\
\NH_{1, 2} &= III \NH_{1, 1, 1} ⊕ IX \NH_{1, 1, 1}.
\end{align*}
With the help of these decompositions we can identify an element of each space in the square as sending any of the elementary crossing diagrams $ III, IX, XI, XI · IX, W, W · IX $ to an element of $ T $. In terms of these, the diagram reads
\begin{equation*}
\begin{tikzcd}
T ¤ \vspan(III, XI, W)^{∨} \arrow[r] \arrow[d] & T ¤ \vspan(III, IX, XI, W)^{∨} \arrow[d] \\
T ¤ \vspan(III)^{∨} \arrow[r] & T ¤ \vspan(III, IX)^{∨}
\end{tikzcd}
\end{equation*}
The left map is the restriction map, sending $ A III^{∨} + B XI^{∨} + C W^{∨} $ to $ A III^{∨} $. The bottom map sends $ A III^{∨} $ to the function $ φ $ with $ φ(III) = A $ and $ φ(IX) = A · IX $ thus $ φ = A · III^{∨} + A· IX · IX^{∨} $. The top map sends $ f = A III^{∨} + B XI^{∨} + C W^{∨} $ to the double-function $ φ $ determined by $ φ(IX)(XI) = f(IX · XI) = f(W) = C $ and $ φ(III)(III) = f(III) = A $ and $ φ(III)(XI) = f(XI) = B $ and $ φ(IX)(III) = f(IX) = A · IX $, thus to $ φ = A · III^{∨} + (A · IX) · IX^{∨} + B · XI^{∨} + C W^{∨} $. Expressed as matrices, the top map reads $ (A, B, C) ↦ (A, A · IX, B, C) $. The right map is the restriction map to the first two components. It is now obvious that the diagram is bicartesian.
\end{proof}

\subsection{The bifactorization cube $ Q_{\NHS_5} ((2, 3), (2, 3)) $}
\label{sec:example-Q_23_23}
In this section, we demonstrate the iterative procedure of \autoref{th:nh-ccmccm-vanishing}. We focus on the case of $ Q_{\NHS_5} ((2, 3), (2, 3)) $ and illustrating the diagram sets which appear during the procedure. In this specific case we have $ c = 2 $ and $ m = 1 $.

The iteration proceeds in three steps, starting with the Beck-Chevalley cube $ B^3 $ and ending with $ B^0 $. For every step during the iteration, we illustrate the diagrams that characterize the vertices of $ B^{3-i} $. Instead of the individual diagram sets $ \Anycross^i_τ $ and $ \Mincross^i_τ $, we shall depict only the compositions of the diagrams from these two sets. This makes it easy to trace how the diagrams of $ B^2 $ arise from those of $ B^3 $ and how the diagrams of $ B^1 $ arise from those of $ B^2 $.

For notational ease, we write $ σ $ immediately instead of $ \Perf \NH_σ $ in the Beck-Chevalley and intermediate Beck-Chevalley cubes. Note that functor composition direction is from top to bottom following \autoref{conv:nh-algebra-strandconv}, while the strand reading direction is from bottom to top, following \autoref{conv:prelim-bc-functorconv}. The reading directions are illustrated in \autoref{fig:example-Q2323-000_001}.

\input{example/Q_23_23/fig_cells.tex}

\paragraph*{The Beck-Chevalley cube $ B^3 $.}
The Beck-Chevalley cube $ B^3 = \BC(Q_{\NHS_5} ((2, 3), (2, 3))) $ is 3-dimensional. Its axes are labeled by the two binary digits $ β_1, β_2 $ and a third digit which distinguishes top and bottom layer. The 8 vertices are depicted in \autoref{fig:example-Q2323-000_001} till \ref{fig:example-Q2323-110_111}. The top layer is given by the vertices
\begin{equation*}
\begin{tikzcd}
B^3_{0,0,0} \arrow[r] \arrow[d] & B^3_{1,0,0} \arrow[d] \\
B^3_{0,1,0} \arrow[r] & B^3_{1,1,0}
\end{tikzcd}
\quad \text{with dimensions}
\begin{tikzcd}
10 \arrow[r] \arrow[d] & 12 \arrow[d] \\
18 \arrow[r] & 24.
\end{tikzcd}
\end{equation*}
The bottom layer is given by the vertices
\begin{equation*}
\begin{tikzcd}
B^3_{0,0,1} \arrow[r] \arrow[d] & B^3_{1,0,1} \arrow[d] \\
B^3_{0,1,1} \arrow[r] & B^3_{1,1,1}
\end{tikzcd}
\quad \text{with dimensions}
\begin{tikzcd}
1 \arrow[r] \arrow[d] & 6 \arrow[d] \\
3 \arrow[r] & 12.
\end{tikzcd}
\end{equation*}

\paragraph*{The intermediate Beck-Chevalley cube $ B^2 $.}
The intermediate cube $ B^2 $ is 2-dimensional. Its axes are labeled by the two binary digits $ β_1, β_2 $. The 4 vertices are the kernels $ B^2_{β_1, β_2} = \Ker(B^3_{β_1, β_2, 0} → B^3_{β_1, β_2, 1}) $ of the vertical maps from top to bottom layer. The vertices are depicted in \autoref{fig:example-Q2323-00_10} and \ref{fig:example-Q2323-01_11}. The intermediate cube is
\begin{equation*}
\begin{tikzcd}
B^2_{0,0} \arrow[r] \arrow[d] & B^2_{1,0} \arrow[d] \\
B^2_{0,1} \arrow[r] & B^2_{1,1}
\end{tikzcd}
\quad \text{with dimensions}
\begin{tikzcd}
9 \arrow[r] \arrow[d] & 6 \arrow[d] \\
15 \arrow[r] & 12.
\end{tikzcd}
\end{equation*}

\paragraph*{The intermediate Beck-Chevalley cube $ B^1 $.}
The intermediate cube $ B^1 $ is 1-dimensional. Its single axis is labeled by the binary digit $ β_2 $. The 2 vertices are the kernels $ B^1_{β_2} = \Ker(B^2_{0, β_2} → B^2_{1, β_2}) $. They are depicted in \autoref{fig:example-Q2323-0_1}. The intermediate cube is
\begin{equation*}
\begin{tikzcd}
B^1_0 \arrow[r]  & B^2_1
\end{tikzcd}
\quad \text{with dimensions}
\begin{tikzcd}
3 \arrow[r] & 3.
\end{tikzcd}
\end{equation*}

\paragraph*{Defect vanishing.}
The total fiber of the Beck-Chevalley cube is the fiber of $ B^1 $. Regard the source composition $ (2, 3) $ and the target composition $ (2, 3) $. We observe that the 3 diagrams in both vertices of $ B^1 $ are precisely the $ (2, 3) $-shuffles which send the strands of the source $ 2 $ block into the target $ 3 $ block and the leftmost $ 2 $ strands of the source $ 3 $ block into the target $ 2 $ block. This precisely confirms the crossing behavior formalized in \autoref{sec:nh-ccmccm}. We observe that the three diagrams in the two vertices of $ B^1 $ are precisely the same. This means that the fiber of $ B^1 $ vanishes, which proves the Defect vanishing axiom.

\section{Remaining checks of twist invertibility and defect vanishing}
\label{sec:appchecks} 
In this section, we provide remaining checks of the twist invertibility and defect vanishing axioms. We start with a pair of compositions $ (a, b), (c, d) ∈ \Comp(n+1) $. We abide by the case distiction provided in \autoref{sec:prelim-bifcube} but skip the case $ a < c $ because of the apparent symmetry in the bifactorization cubes. We therefore assume $ a ≥ c $ and distinguish six cases. We provide a slightly more elaborate description for the case of $ (a, a), (a, a) $ but generally focus on providing very brief descriptions of the bifactorization cube and its associated BC cube so that it becomes clear that these are analogous to the case $ ((c, c+m), (c, c+m)) $.

\paragraph*{Twist invertibility for $ Q_{\NHS_{2a}} ((a, a), (a, a)) $.}
This bifactorization cube is $ a+1 $-dimensional and we have
\begin{equation*}
Q_{\NHS_{2a}} ((a, a), (a, a)))_{δ_1, δ_2, β_1, …, β_{a-1}} = \Perf \NH_{ψ^{-1} (ε_1 … ε_{a-1} (δ_1 ∨ δ_2) ε_{a-1} … ε_1)}.
\end{equation*}
Thus the Beck-Chevalley cube is $ a $-dimensional and given by
\begin{align*}
\BC(Q_{\NHS_{2a}} ((a, a), (a, a)))_{β_1, …, β_{a-1}, 0} &=
\begin{array}[c]{l}
\Perf \NH_{ψ^{-1} (0^{c-1} 1 0^{c-1})} \\ \Perf \NH_{ψ^{-1} (β_1 … β_{c-1} 1 β_{c-1} … β_1)} \\ \Perf \NH_{ψ^{-1} (β_1 … β_{c-1} 0 β_{c-1} … β_1)} \\ \Perf \NH_{ψ^{-1} (β_1 … β_{c-1} 1 β_{c-1} … β_1)} \\ \Perf \NH_{ψ^{-1} (0^{c-1} 1 0^{c-1})}
\end{array}, \\
\BC(Q_{\NHS_{2a}} ((a, a), (a, a)))_{β_1, …, β_{a-1}, 1} &=
\begin{array}[c]{l}
\Perf \NH_{ψ^{-1} (0^{c-1} 1 0^{c-1})} \\ \Perf \NH_{ψ^{-1} (β_1 … β_{c-1} 1 β_{c-1} … β_1)} \\ \Perf \NH_{ψ^{-1} (β_1 … β_{c-1} 1 β_{c-1} … β_1)} \\ \Perf \NH_{ψ^{-1} (β_1 … β_{c-1} 1 β_{c-1} … β_1)} \\ \Perf \NH_{ψ^{-1} (0^{c-1} 1 0^{c-1})}.
\end{array}
\end{align*}
Let us now explain how to compute the total fiber of the Beck-Chevalley cube. We deploy a similar iterative procedure as in \autoref{sec:nh-ccmccm}. In every step, the vertices of the intermediate Beck-Chevalley cubes can be described by means of maps from a set of diagrams to a given $ T ∈ \Perf \NH_{a, a} $. The minimum number of crossings of these diagrams is equal to $ i $ after the $ i $-th iteration of the process.

After the $ a $-th iteration, the intermediate Beck-Chevalley cube consists of a single vertex. The single diagram $ X $ associated to this vertex is the total crossing on $ 2a $ strands between the first $ a $ and the second $ a $ strands. It is depicted in \autoref{fig:appcheck-aaaa-crossing}. In conclusion, the total fiber of the Beck-Chevalley cube is given by $ \Map(\{X\}, T) $. In other words, the total fiber is a copy of $ T $ itself. It inherits an $ \NH_{a, a} $-module structure which is given by $ t.n = tφ(n) $ where $ φ: \NH_{a, a} = \NH_a ¤ \NH_a → \NH_{a, a} = \NH_a ¤ \NH_a $ is the tensor flip. This means that the total fiber is simply the flip map $ \Perf \NH_{a, a} → \Perf \NH_{a, a} $, and finishes the proof of twist invertibility.

\begin{figure}
\centering
\begin{tikzpicture}
\shufflebox{(0, 0)}{8}{A1}{0.125};
\path (A1-C) node {$ a+a $};
\shufflebox{(0, -1)}{4}{B1}{0};
\shufflebox{(2.25, -1)}{4}{B2}{0};
\path (B1-C) node {$ a $};
\path (B2-C) node {$ a $};
\path[draw] (B1-T1) -- (A1-B5);
\path[draw] (B1-T2) -- (A1-B6);
\path[draw] (B1-T3) -- (A1-B7);
\path[draw] (B1-T4) -- (A1-B8);
\path[draw] (B2-T1) -- (A1-B1);
\path[draw] (B2-T2) -- (A1-B2);
\path[draw] (B2-T3) -- (A1-B3);
\path[draw] (B2-T4) -- (A1-B4);
\end{tikzpicture}
\caption{This figure depicts the total crossing between two blocks of $ a $ strands each. This diagram is the only remaining diagram after the iterative procedure for computing the total fiber of the Beck-Chevalley cube $ \BC(Q_{\NHS_{2a}} ((a, a), (a, a))) $.}
\label{fig:appcheck-aaaa-crossing}
\end{figure}
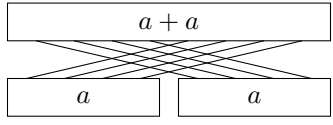

\paragraph*{Twist invertibility for $ Q_{\NHS_{2c+l}} ((c+l, c), (c, c+l)) $.}
This bifactorization cube is $ c+1 $-dimensional and
\begin{equation*}
Q_{\NHS_{2c+l}} ((c+l, c), (c, c+l))_{δ_1, δ_2, ε_1, …, ε_{c-1}} = \Perf \NH_{ψ^{-1} (ε_1 … ε_{c-1} (δ_1 0^{l-1} δ_2) ε_{c-1} … ε_1)}.
\end{equation*}
The cube $ \BC(Q_{\NHS_{2c+l}} ((c+l, c), (c, c+l))) $ is $ c $-dimensional and
\begin{align*}
% top
\BC(Q_{\NHS_{2c+l}} ((c+l, c), (c, c+l)))_{β_1, …, β_{c+m-1}, 0} &= 
\begin{array}[c]{l}
\Perf \NH_{ψ^{-1} (0^{c-1} 00^{l-1}1 0^{c-1})} \\ \Perf \NH_{ψ^{-1} (β_1 … β_{c-1} 00^{l-1}1 β_{c-1} … β_1)} \\ \Perf \NH_{ψ^{-1} (β_1 … β_{c-1} 00^{l-1}0 β_{c-1} … β_1)} \\ \Perf \NH_{ψ^{-1} (β_1 … β_{c-1} 10^{l-1}0 β_{c-1} … β_1)} \\ \Perf \NH_{ψ^{-1} (0^{c-1} 10^{l-1}0 0^{c-1})}
\end{array}, \\
\BC(Q_{\NHS_{2c+l}} ((c+l, c), (c, c+l)))_{β_1, …, β_{c+m-1}, 1} &= 
\begin{array}[c]{l}
\Perf \NH_{ψ^{-1} (0^{c-1} 00^{l-1}1 0^{c-1})} \\ \Perf \NH_{ψ^{-1} (β_1 … β_{c-1} 00^{l-1}1 β_{c-1} … β_1)} \\ \Perf \NH_{ψ^{-1} (β_1 … β_{c-1} 10^{l-1}1 β_{c-1} … β_1)} \\ \Perf \NH_{ψ^{-1} (β_1 … β_{c-1} 10^{l-1}0 β_{c-1} … β_1)} \\ \Perf \NH_{ψ^{-1} (0^{c-1} 10^{l-1}0 0^{c-1})}.
\end{array}
\end{align*}

The iterative procedure works in a way analogous to the case of $ ((a, a), (a, a)) $. The final intermediate Beck-Chevalley cube is $ \BC(Q_{\NHS_{2c+l}} ((c+l, c), (c, c+l)))^0 (T) = \Map(\{X\}, T) $. The single permutation $ X $ shuffles the first $ c+l $ strands across the remaining $ c $ strands. This establishes twist invertibility.

\paragraph*{Defect-vanishing for $ Q_{\NHS_{2b+m}} ((b+m, b), (b+m, b)) $.}
Let $ b, m > 0 $. The cube $ Q_{\NHS_{2b+m}} ((b+m, b), (b+m, b)) $ is $ b+2 $-dimensional and
\begin{equation*}
Q_{\NHS_{2b+m}} ((b+m, b), (b+m, b))_{δ_1, δ_2, ζ, ε_1, …, ε_{b-1}} = \Perf \NH_{ψ^{-1} (0^{m-1} ζ ε_1 … ε_{c-1} (δ_1 ∨ δ_2) ε_{c-1} … ε_1)}.
\end{equation*}
Its Beck-Chevalley cube is $ b+1 $-dimensional and
\begin{align*}
% top
\BC(Q_{\NHS_{2b+m}} ((b+m, b), (b+m, b)))_{ζ, β_1, …, β_{b-1}, 0} &= 
\begin{array}[c]{l}
\Perf \NH_{ψ^{-1} (0^{m-1} 0 0^{c-1} 1 0^{c-1})} \\ \Perf \NH_{ψ^{-1} (0^{m-1} ζ β_1 … β_{c-1} 1 β_{c-1} … β_1)} \\ \Perf \NH_{ψ^{-1} (0^{m-1} ζ β_1 … β_{c-1} 0 β_{c-1} … β_1)} \\ \Perf \NH_{ψ^{-1} (0^{m-1} ζ β_1 … β_{c-1} 1 β_{c-1} … β_1)} \\ \Perf \NH_{ψ^{-1} (0^{m-1} 0 0^{c-1} 1 0^{c-1})}
\end{array}, \\
\BC(Q_{\NHS_{2b+m}} ((b+m, b), (b+m, b)))_{ζ, β_1, …, β_{b-1}, 1} &=
\begin{array}[c]{l}
\Perf \NH_{ψ^{-1} (0^{m-1} 0 0^{c-1} 1 0^{c-1})} \\ \Perf \NH_{ψ^{-1} (0^{m-1} ζ β_1 … β_{c-1} 1 β_{c-1} … β_1)} \\ \Perf \NH_{ψ^{-1} (0^{m-1} ζ β_1 … β_{c-1} 1 β_{c-1} … β_1)} \\ \Perf \NH_{ψ^{-1} (0^{m-1} ζ β_1 … β_{c-1} 1 β_{c-1} … β_1)} \\ \Perf \NH_{ψ^{-1} (0^{m-1} 0 0^{c-1} 1 0^{c-1})}.
\end{array}
\end{align*}
The situation is exactly analogous to $ Q_{\NHS_{2c+m}} ((c, c+m), (c, c+m)) $. We finish the proof.

\paragraph*{Defect-vanishing for $ Q_{\NHS_{2b+m+l}} ((b+m+l, b), (b+m, b+l)) $.}
Let $ b, m, l > 0 $. The cube $ Q_{\NHS_{2b+m+l}} ((b+m+l, b), (b+m, b+l)) $ is $ b+2 $-dimensional and
\begin{equation*}
Q_{\NHS_{2b+m+l}} ((b+m+l, b), (b+m, b+l))_{δ_1, δ_2, ζ, ε_1, …, ε_{b-1}} = \Perf \NH_{ψ^{-1} (0^{m-1} ζ ε_1 … ε_{b-1} δ_1 0^{l-1} δ_2 ε_{c-1} … ε_1)}.
\end{equation*}
Its Beck-Chevalley cube is $ b+1 $-dimensional and
\begin{align*}
% top
\BC(Q_{\NHS_{2b+m+l}} ((b+m+l, b), (b+m, b+l)))_{ζ, β_1, …, β_{b-1}, 0} &= 
\begin{array}[c]{l}
\Perf \NH_{ψ^{-1} (0^{m-1} 0 0^{b-1} 00^{l-1}1 0^{b-1})} \\\Perf \NH_{ψ^{-1} (0^{m-1} ζ β_1 … β_{b-1} 00^{l-1}1 β_{b-1} … β_1)} \\ \Perf \NH_{ψ^{-1} (0^{m-1} ζ β_1 … β_{b-1} 00^{l-1}0 β_{b-1} … β_1)} \\ \Perf \NH_{ψ^{-1} (0^{m-1} ζ β_1 … β_{b-1} 10^{l-1}0 β_{b-1} … β_1)} \\ \Perf \NH_{ψ^{-1} (0^{m-1} 0 0^{b-1} 10^{l-1}0 0^{b-1})}
\end{array}, \\
\BC(Q_{\NHS_{2b+m+l}} ((b+m+l, b), (b+m, b+l)))_{ζ, β_1, …, β_{b-1}, 1} &=
\begin{array}[c]{l}
\Perf \NH_{ψ^{-1} (0^{m-1} 0 0^{b-1} 00^{l-1}1 0^{b-1})} \\ \Perf \NH_{ψ^{-1} (0^{m-1} ζ β_1 … β_{b-1} 00^{l-1}1 β_{b-1} … β_1)} \\ \Perf \NH_{ψ^{-1} (0^{m-1} ζ β_1 … β_{b-1} 10^{l-1}1 β_{b-1} … β_1)} \\ \Perf \NH_{ψ^{-1} (0^{m-1} ζ β_1 … β_{b-1} 10^{l-1}0 β_{b-1} … β_1)} \\ \Perf \NH_{ψ^{-1} (0^{m-1} 0 0^{b-1} 10^{l-1}0 0^{b-1})}.
\end{array}
\end{align*}
The situation is exactly analogous to $ Q_{\NHS_{2c+m}} ((c, c+m), (c, c+m)) $. We finish the proof.

\paragraph*{Defect vanishing for $ Q_{\NHS_{2c+l+m}} ((c+l, c+m), (c, c+m+l)) $.}
Let $ c, l, m > 0 $. The cube $ Q_{\NHS_{2c+l+m}} ((c+l, c+m), (c, c+m+l)) $ is $ c+2 $-dimensional and
\begin{equation*}
Q_{\NHS_{2c+l+m}} ((c+l, c+m), (c, c+m+l))_{δ_1, δ_2, β_1, …, β_{c-1}, ζ} = \Perf \NH_{ψ^{-1} (ε_1 … ε_{c-1} δ_1 0^{l-1} δ_2 ε_{c-1} … ε_1 ζ 0^{m-1})}.
\end{equation*}
Its Beck-Chevalley cube is $ c+1 $-dimensional and
\begin{align*}
% top
\BC(Q_{\NHS_{2c+l+m}} ((c+l, c+m), (c, c+m+l)))_{ζ, β_1, …, β_{c-1}, 0} &=
\begin{array}[c]{l}
\Perf \NH_{ψ^{-1} (0^{c-1} 00^{l-1}1 0^{c-1} ζ 0^{m-1})} \\ \Perf \NH_{ψ^{-1} (β_1 … β_{c-1} 00^{l-1}1 β_{c-1} … β_1 ζ 0^{m-1})} \\ \Perf \NH_{ψ^{-1} (β_1 … β_{c-1} 00^{l-1}0 β_{c-1} … β_1 ζ 0^{m-1})} \\ \Perf \NH_{ψ^{-1} (β_1 … β_{c-1} 10^{l-1}0 β_{c-1} … β_1 ζ 0^{m-1})} \\ \Perf \NH_{ψ^{-1} (0^{c-1} 10^{l-1}0 0^{c-1} 0 0^{m-1})}
\end{array}, \\
\BC(Q_{\NHS_{2c+l+m}} ((c+l, c+m), (c, c+m+l)))_{ζ, β_1, …, β_{c-1}, 1} &= 
\begin{array}[c]{l}
\Perf \NH_{ψ^{-1} (0^{c-1} 00^{l-1}1 0^{c-1} ζ 0^{m-1})} \\ \Perf \NH_{ψ^{-1} (β_1 … β_{c-1} 00^{l-1}1 β_{c-1} … β_1 ζ 0^{m-1})} \\ \Perf \NH_{ψ^{-1} (β_1 … β_{c-1} 10^{l-1}1 β_{c-1} … β_1 ζ 0^{m-1})} \\ \Perf \NH_{ψ^{-1} (β_1 … β_{c-1} 10^{l-1}0 β_{c-1} … β_1 ζ 0^{m-1})} \\ \Perf \NH_{ψ^{-1} (0^{c-1} 10^{l-1}0 0^{c-1} 0 0^{m-1})}.
\end{array}
\end{align*}
Evaluating the BC cube of this bifactorization cube is analogous to the case of $ Q_{\NHS_{2c+m}} ((c, c+m), (c, c+m)) $ with a slight amount of complexity added through the fact that the cube reduces to $ Q_{\NHS_{2c+l}} ((c+l, c), (c, c+l)) $ instead of $ Q_{\NHS_{2c}} ((c, c), (c, c)) $.

\printbibliography

@misc{Dyckerhoff-Wedrich,
      title={Perverse schobers of Coxeter type $\mathbb{A}$},
      author={Tobias Dyckerhoff and Paul Wedrich},
      year={2025},
      eprint={2504.08496},
      archivePrefix={arXiv},
      primaryClass={math.QA},
      url={https://arxiv.org/abs/2504.08496},
}

@misc{Kapranov-Schechtman,
      title={Perverse Schobers},
      author={Mikhail Kapranov and Vadim Schechtman},
      year={2015},
      eprint={1411.2772},
      archivePrefix={arXiv},
      primaryClass={math.AG},
      url={https://arxiv.org/abs/1411.2772},
}

@misc{ADLSZ,
      title={Quiver Hecke algebras from Floer homology in Couloumb branches}, 
      author={Mina Aganagic and Ivan Danilenko and Yixuan Li and Vivek Shende and Peng Zhou},
      year={2024},
      eprint={2406.04258},
      archivePrefix={arXiv},
      primaryClass={math.SG},
      url={https://arxiv.org/abs/2406.04258}, 
}

@article{Aganagic-I,
author={Mina Aganagic},
journal={Advances in Theoretical and Mathematical Physics},
volume={28},
issue={4},
year={2024},
pages={1151-1239},
}

@misc{Aganagic-II,
      title={Knot Categorification from Mirror Symmetry, Part II: Lagrangians}, 
      author={Mina Aganagic},
      year={2023},
      eprint={2105.06039},
      archivePrefix={arXiv},
      primaryClass={hep-th},
      url={https://arxiv.org/abs/2105.06039}, 
}

@article {Christ,
    AUTHOR = {Christ, Merlin},
     TITLE = {Ginzburg algebras of triangulated surfaces and perverse
              schobers},
   JOURNAL = {Forum Math. Sigma},
  FJOURNAL = {Forum of Mathematics. Sigma},
    VOLUME = {10},
      YEAR = {2022},
     PAGES = {Paper No. e8, 72},
      ISSN = {2050-5094},
   MRCLASS = {18G80 (13F60 14F08)},
  MRNUMBER = {4377269},
       DOI = {10.1017/fms.2022.1},
       URL = {https://doi.org/10.1017/fms.2022.1},
}

\bigskip

\begin{tabular}{@{}l@{}}%
    \textsc{Department of mathematics, University of California, Berkeley, USA}\\
    \textit{jasper.kreeke@berkeley.edu}
  \end{tabular}

\end{document}